\def\Bu{{\bf u}}
\def\Bv{{\bf v}}
\def\Bw{{\bf w}}
\def\BN{{\bf N}}
\newcommand{\divg}{\nabla \cdot}
\newcommand{\curl}{\nabla \times}
\DeclareMathAlphabet{\itbf}{OML}{cmm}{b}{it}
\def\be{{{\itbf e}}}
\def\bn{{{\itbf n}}}
\newcommand{\Dc}{{\Omega\backslash\overline D}}
\newcommand{\bN}{\mathbf{N}}
\newcommand{\bU}{\mathbf{U}}
\newcommand{\RR}{\mathbb{R}}
\newcommand{\R}{\mathbb{R}}
\newcommand{\II}{\mathbb{I}}
\newcommand{\ID}{\mathbf{I}_2}
\newcommand{\DD}{\mathbb{D}}
\newcommand{\K}{{\kappa}}
\newcommand{\dis}{\displaystyle}
\newcommand{\OL}{\mathcal{L}}
\newcommand{\bu}{\mathbf{u}}
\newcommand{\bv}{\mathbf{v}}
\newcommand{\bw}{\mathbf{w}}
\newcommand{\bz}{\mathbf{z}}
\newcommand{\bx}{\mathbf{x}}
\newcommand{\by}{\mathbf{y}}
\newcommand{\E}{\mathcal{E}}
\newcommand{\CC}{\mathbb{C}}
\newcommand{\MM}{\mathbb{M}}
\newcommand{\bGam}{\mathbf{\Gamma}^\omega}
\newcommand{\ds}{\displaystyle}
\newcommand{\NN}{\mathbf{N}}
\renewcommand{\S}{\mathcal{S}}
\newcommand{\OH}{\mathcal{H}}
\newcommand{\I}{{\mathcal{I}}}
\newcommand{\IWF}{\mathcal{I}_{\rm WF}}
\newcommand{\Kcal}{\mathcal{K}}
\def\nm{\noalign{\medskip}}
\newcommand{\beq}{\begin{equation}}
\newcommand{\eeq}{\end{equation}}
\newcommand{\bg}{\mathbf{g}}
\newcommand{\bnu}{\bm{\nu}}
\newcommand{\bmu}{\gamma}
\newcommand{\EE}{\mathbb{E}}
\newcommand{\signoise}{\sigma_{\rm{noise}}}
\newcommand{\sigmu}{\sigma_\gamma}
\newtheorem{thm}{Theorem}[section]
\newtheorem{cor}[thm]{Corollary}
\newtheorem{lem}[thm]{Lemma}
\newtheorem{prop}[thm]{Proposition}
\newtheorem{defn}[thm]{Definition}
\newtheorem{rem}[thm]{Remark}
\numberwithin{equation}{section}
\newcommand{\pathfigures}{Figures/}
\begin{document}
\title{Localization, Stability, and Resolution of Topological Derivative
 Based Imaging Functionals in Elasticity
\thanks{\footnotesize This work was supported by the ERC Advanced Grant Project MULTIMOD--267184 and Korean
 Ministry of Education, Science, and Technology through grant NRF 2010-0017532.}
}

\author{
Habib Ammari
\thanks{\footnotesize Department of Mathematics and Applications, Ecole Normale Sup\'erieure, 45 Rue d'Ulm,
 75005 Paris, France (habib.ammari@ens.fr, wjing@dma.ens.fr).}
\and
Elie Bretin
\thanks{\footnotesize Institut Camille Jordan, INSA de Lyon, 69621, Villeurbanne Cedex, France (elie.bretin@insa-lyon.fr).}
\and
Josselin Garnier
\thanks{\footnotesize Laboratoire de Probabilit\'es et Mod\`eles Al\'eatoires \& Laboratoire Jacques-Louis Lions, Universit\'e Paris VII, 75205 Paris Cedex 13, France (garnier@math.jussieu.fr).}
\and  Wenjia Jing\footnotemark[2] \and Hyeonbae Kang
\thanks{\footnotesize Department of Mathematics, Inha University, Incheon, 402-751, Korea (hbkang@inha.ac.kr).}
\and
Abdul Wahab
\thanks{\footnotesize Department of Mathematics, COMSATS Institute of Information Technology, 47040, Wah Cantt., Pakistan (wahab@ciitwah.edu.pk).}
}

\maketitle

\begin{abstract}
The focus of this work is on rigorous mathematical analysis of the
topological derivative based detection algorithms for the
localization of an elastic inclusion of vanishing characteristic
size. A filtered quadratic misfit is considered and the
performance of the topological derivative imaging functional
resulting therefrom is analyzed. Our analysis reveals that the
imaging functional may not attain its maximum at the location of
the inclusion. Moreover, the resolution of the image is below the
diffraction limit. Both phenomena are due to the coupling of
pressure and shear waves propagating with different wave speeds
and polarization directions. A novel imaging functional based on
the weighted Helmholtz decomposition of the topological derivative
is, therefore, introduced. It is thereby substantiated that the
maximum of the imaging functional is attained at the location of
the inclusion and the resolution is enhanced and it proves to be
the diffraction limit. Finally, we investigate the stability of
the proposed imaging functionals with respect to measurement and
medium noises.
\end{abstract}

\noindent {\footnotesize {\bf AMS subject classifications.} 35L05,
35R30, 74B05; Secondary 47A52, 65J20}

\noindent {\footnotesize {\bf Key words.} Elasticity imaging,
elastic waves,  topological derivative, topological sensitivity,
localization, resolution.}

\section{Introduction}\label{sec:intro}

We consider the inverse problem of identifying the location  of a
small elastic inclusion  in a homogeneous isotropic background
medium from  boundary measurements. The main motivations of this
work are Non-Destructive Testing (NDT) of elastic structures for
material impurities \cite{Dominguez1}, exploration geophysics
\cite{Aki}, and medical diagnosis, in particular, for detection of
potential tumors of diminishing size \cite{Yuan}.

The long standing problem of anomaly detection has been addressed
using a variety of techniques including small volume expansion
methods \cite{AK-Pol, AK-book04}, MUSIC type algorithms
\cite{Direct} and time-reversal techniques \cite{TrElastic,
AGHL-Tr}. The focus of the present study is on the topological
derivative based anomaly detection algorithms for elasticity. As
shown in \cite{AGJK-Top}, in anti-plane elasticity, the
topological derivative based imaging functional performs well and
is robust with respect to noise and sparse or limited view
measurements. The objective of this work is to extend this concept
to the general case of linear isotropic elasticity. The analysis
is much more delicate in the general case than in the scalar case
because of the coupling between the shear and pressure waves.

The concept of topological derivative (TD), initially proposed for
shape optimization in \cite{Eschenauer, soko, Cea},  has been
recently applied to the imaging of small anomalies, see for
instance, \cite{Dominguez1, Dominguez2, GuzinaBon, Guzina,
Hintermuller, Hintermuller2, Masmoudi} and references therein.
However, its use in the context of imaging has been heuristic and
lacks mathematical justifications, notwithstanding its usefulness.

In a prior work \cite{AGJK-Top}, acoustic anomaly detection algorithms based on the concept
of TD are analyzed and their performance is compared with different detection techniques.
 Moreover, a stability and resolution analysis is carried out in the presence of measurement and medium noises.

The aim of this work is to analyze the ability of the TD based
sensitivity framework for detecting elastic inclusions of
vanishing characteristic size. Precisely, our goal  is threefold:
(i) to perform a rigorous mathematical analysis of the TD  based
imaging; (2) to design a modified imaging framework based on the
analysis. In the case of a density contrast, the modified
framework yields a topological derivative based imaging
functional, {\it i.e.}, deriving from the topological derivative
of a discrepancy functional. However, in the case where the Lam\'e
coefficients of the small inclusion are different from those of
the background medium, the modified functional is rather of a
Kirchhoff type. It is based on the correlations between,
separately,  the shear and compressional parts of the
backpropagation of the data and those of the background solution.
It can not be derived as the topological derivative of a
discrepancy functional; and (3) to investigate the stability of
the proposed imaging functionals with respect to measurement and
medium noises.


In order to put this work in a proper context, we emphasize some
of its significant achievements. A trial inclusion is created in
the background medium at a given search location.
Then, a discrepancy functional is considered (c.f. Section \ref{sec:TD}), which is
the elastic counterpart of the filtered quadratic misfit proposed in \cite{AGJK-Top}. The search points that minimize the discrepancy between measured data and
the fitted data are then sought for. In order to find its minima,
the misfit is expanded using the asymptotic expansions due to the
perturbation  of the displacement field in the presence of an
inclusion versus its characteristic size. The first order term in
the expansion is then referred to as TD of the misfit (c.f.
Section \ref{sec:TD:top}) which synthesizes its sensitivity
relative to the insertion of an inclusion at a given search
location. We show that its maximum, which corresponds to the point
at which the insertion of the inclusion maximally decreases the
misfit, may not be at the location of the true inclusion (c.f.
Section \ref{sec:TD:sensitivity}). Further, it is revealed that
its resolution is low due to the coupling of pressure and shear
wave modes having different wave speeds and polarization
directions. Nevertheless, the coupling terms responsible for this
degeneracy can be canceled out using a modified imaging framework.
A weighed imaging functional is defined using the concept of a
weighted Helmholtz decomposition, initially proposed in
\cite{TrElastic} for time reversal imaging of extended  elastic
sources. It is proved that the modified detection algorithm
provides a resolution limit of the order of half a wavelength,
indeed, as the new functional behaves as the square of the
imaginary part of a pressure or shear Green function (c.f. Section
\ref{sec:WTD:sensitivity}). For simplicity, we restrict ourselves
to the study of two particular situations when we have only a
density contrast or an elasticity contrast. In order to cater to
various applications, we provide explicit results for the
canonical cases of circular and spherical inclusions. It is also
important to note that the formulae of the TD based functionals
are explicit in terms of the incident wave and the free space
fundamental solution instead of the Green function in the bounded
domain with imposed boundary conditions. This is in contrast with
the prior results, see for instance, \cite{Guzina}. Albeit a
Neumann boundary condition is imposed on the displacement field,
the results of this paper extend to the problem with Dirichlet
boundary conditions. A stability analysis of the TD based imaging
functionals was also missing in the literature. In this paper we
carry out a detailed stability analysis of the proposed imaging
functionals with respect to both measurement and medium noises.

The rest of this paper is organized as follows: In Section
\ref{sec:mathform}, we introduce some notation and present the
asymptotic expansions due to the perturbation of the displacement
field in the presence of small inclusions. Section \ref{sec:TD} is
devoted to the study of TD imaging functional resulting from the
expansion of the filtered quadratic misfit with respect to the
size of the inclusion. As discussed in Section
\ref{sec:TD:sensitivity}, the resolution in TD imaging framework
is not optimal. Therefore, a modified imaging framework is
established in Section \ref{sec:WTD}. The sensitivity analysis of
the modified framework is presented in Section
\ref{sec:WTD:sensitivity}. Sections \ref{sec:ssmeas} and
\ref{sec:ssmedium}  are devoted to the stability analysis with
respect to measurement and medium noises, respectively. The paper
is concluded in Section \ref{sec:conclu}.

\section{Mathematical formulation}\label{sec:mathform}

This section is devoted to preliminaries, notation and assumptions
used in rest of this paper. We also recall a few fundamental
results related to small volume asymptotic expansions of the
displacement field due to the presence of a penetrable inclusion
with respect to the size of the inclusion, which will be essential
in the sequel.

\subsection{Preliminaries and Notations}\label{sec:mathform:not}

Consider a homogeneous isotropic elastic material occupying a bounded domain
$\Omega\subset\R^d$, for $d=2$ or $3$, with connected Lipschitz boundary $\partial\Omega$.
Let the Lam\'e (compressional and shear) parameters of  $\Omega$ be $\lambda_0$ and $\mu_0$
(respectively) in the absence of any inclusion and $\rho_0>0$ be the (constant) volume density
of the background. Let $D\subset\Omega$ be an elastic inclusion  with Lam\'e parameters $\lambda_1$,
 $\mu_1$ and density $\rho_1>0$.   Suppose that $D$ is given by
\begin{equation}\label{D-Parametrization}
D:=\delta B+\bz_a
\end{equation}
where $B$ is a bounded Lipschitz domain in $\RR^d$ containing the
origin and $\bz_a$ represents the location of the inclusion $D$.
The small parameter $\delta$ represents  the characteristic size
of the diameter of $D$. Moreover, we assume that $D$ is separated
apart from the boundary $\partial\Omega$, \emph{i.e.}, there
exists a constant $c_0>0$ such that
\begin{equation}\label{D-Omega-dist}
\inf_{\bx\in D}\rm{dist}(\bx,\partial\Omega)\geq c_0,
\end{equation}
where $ {\rm dist}$  denotes the distance. Further, it is assumed
that
\begin{equation}\label{Lame-Conditions}
d\lambda_m+2\mu_m >0,\quad \mu_m >0, \quad m \in \{0,1\}, \quad
(\lambda_0-\lambda_1)(\mu_0-\mu_1)\geq 0.
\end{equation}

Consider the following transmission problem with the Neumann
boundary condition:
\begin{equation}
\label{Prob_Trans}
\left\{
\begin{array}{ll}
\OL_{\lambda_0,\mu_0}\bu+\rho_0\omega^2\bu=0  & \text{in }\Dc,
\\\nm
\OL_{\lambda_1,\mu_1}\bu+\rho_1\omega^2\bu=0  & \text{in } D,
\\\nm
\bu{\big|_-}=\bu{\big|_+} & \text{on } \partial D,
\\\nm
\dis\frac{\partial\bu}{\partial\widetilde\nu}{\Big|_-}=\dis\frac{\partial\bu}{\partial\nu}{\Big|_+}
& \text{on } \partial D,
\\\nm
\dis\frac{\partial\bu}{\partial\nu}=\bg & \text{on }\partial\Omega,
\end{array}
\right.
\end{equation}
where $\omega>0$ is the angular frequency of the mechanical
oscillations, the linear elasticity system
$\OL_{\lambda_0,\mu_0}$ and the co-normal derivative
$\dis\frac{\partial}{\partial \nu}$, associated with parameters
$(\lambda_0,\mu_0)$ are defined by
\begin{equation}\label{LE_Def}
\OL_{\lambda_0,\mu_0} [\bw]:=
\mu_0\Delta\bw+(\lambda_0+\mu_0)\nabla\nabla\cdot\bw
\end{equation}
and
\begin{equation}\label{Conormal_Def}
\dis\frac{\partial \bw}{\partial \nu}:=\lambda_0(\nabla\cdot\bw)\bn+\mu_0(\nabla\bw^T+(\nabla\bw^T)^T)
\bn ,
\end{equation}
respectively. Here superscript $T$ indicates the transpose of a
matrix, $\bn$ represents the outward unit normal to $\partial D$,
and $\frac{\partial}{\partial \widetilde\nu }$ is the co-normal
derivative associated with $(\lambda_1,\mu_1)$. To insure
well-posedness, we assume that $\rho_0\omega^2$ is different from
the Neumann eigenvalues of the operator $-\OL_{\lambda_0,\mu_0}$
in $\left(L^2(\Omega)\right)^d$. Using the theory of collectively
compact operators (see, for instance, \cite[Appendix
A.3]{AK-book04}), one can show that for small $\delta$ the
transmission problem (\ref{Prob_Trans}) has a unique solution for
any $\bg \in \left(L^2(\partial \Omega)\right)^d$.

Throughout this work, for a domain $X$, notations $|_-$ and $|_+$
indicate respectively the limits from inside and from outside $X$
to its boundary $\partial X$,  $\delta_{ij}$ represents the
Kronecker's symbol and
$$
\alpha,\beta\in\{P,S\},
\qquad
i,j,k,l,i',j',k',l',p,q\in\{1,\cdots, d\},
\qquad
m\in \{0, 1 \},
$$
where $P$ and $S$ stand for pressure and shear parts,
respectively.

\subsubsection*{Statement of the Problem:} The problem under consideration is the following:

\emph{Given the displacement field $\bu$, the solution of the
Neumann problem \eqref{Prob_Trans} at the boundary
$\partial\Omega$,  identify the location $\bz_a$  of the inclusion
$D$ using a TD based sensitivity framework.}

\subsection{Asymptotic analysis and fundamental results}\label{sec:mathform:asymp}

Consider the fundamental solution
$\bGam_m(\bx,\by):=\bGam_m(\bx-\by)$ of the homogeneous
time-harmonic elastic wave equation in $\RR^d$ with parameters
$(\lambda_m,\mu_m,\rho_m)$, \emph{i.e.}, the solution to
\begin{equation} \label{Green_Eqn}
(\OL_{\lambda_m,\mu_m}+\rho_m\omega^2)\bGam_m(\bx-\by)=
\delta_{\by}(\bx)\ID,\qquad\forall \bx\in\RR^d,  \bx\neq \by ,
\end{equation}
subject to the \emph{Kupradze's} outgoing radiation conditions
\cite{Kup},  where $\delta_\by$ is the Dirac mass at $\by$ and
$\ID$ is the $d\times d$ identity matrix. Let
$c_{S}=\sqrt{\frac{\mu_0}{\rho_0}}$ and $c_P=\sqrt{
\frac{\lambda_0+ 2 \mu_0}{\rho_0}}$ be the background shear and
the pressure wave speeds respectively. Then  $\bGam_0$ is given by
\cite{Aki}
\begin{equation}\label{Green_fun}
\bGam_0(\bx)=\left\{\dis\frac{1}{\mu_0}\ID
G_S^\omega(\bx)-\frac{1}{\rho_0\omega^2} \DD_\bx\left[
G_P^\omega(\bx)-G_S^\omega(\bx)\right]\right\},\quad
\bx\in\R^d,\quad d=2,3,
\end{equation}
where the tensor $\DD_\bx$ is defined by  $$
\DD_\bx=\nabla_\bx\otimes\nabla_\bx=(\partial_{ij})_{i,j=1}^d,
$$ and the function $G_\alpha^\omega$ is the fundamental solution
to the Helmholtz operator, \emph{i.e.},
$$
(\Delta+\K^2_\alpha) G^\omega_\alpha(\bx)=\delta_{\bf 0}(\bx)\quad
\bx\in\RR^d, \bx\neq \mathbf{0},
$$
subject to the \emph{Sommerfeld's} outgoing radiation condition
$$
\left |\frac{\partial G^\omega_\alpha}{\partial\bn}-i\K_\alpha
G^\omega_\alpha\right|(\bx)=o(R^{1-d/2}), \qquad \bx\in\partial B
({\bf 0}, R),
$$
with $B ({\bf 0}, R)$ being the sphere of radius $R$ and center
the origin. Here $\partial_{ij} = \frac{\partial^2}{\partial x_i
\partial x_j}$, $\K_\alpha:=\frac{\omega}{c_\alpha}$ is the
wave-number, and $\frac{\partial}{\partial\bn}$ represents the
normal derivative.

The function $G^\omega_\alpha$ is given by
\begin{equation}
G^\omega_\alpha(\bx)= \left\{
\begin{array}{ll}
\dis - \frac{i}{4} H_0^{(1)}(\K_{\alpha}|\bx|), & d=2,
\\\nm
\dis - \frac{e^{i\K_\alpha|\bx|}}{4\pi |\bx|}, & d=3,
\end{array}
\right.
\end{equation}
where $H_n^{(1)}$ is the order $n$ Hankel function of first kind.

Note that  $\bGam_0$  can be decomposed into shear and pressure components \emph{i.e.}
\begin{equation}
\label{210}
\bGam_0(\bx)=\bGam_{0,S}(\bx)+\bGam_{0,P}(\bx),\quad\forall
\bx\in\RR^d,\quad \bx\neq \mathbf{0},
\end{equation}
where
\begin{equation} \label{211}
\bGam_{0,P}(\bx)= - \dis \frac{1}{\mu_0\K_S^2}\DD_\bx
G_P^\omega(\bx) \quad\text{and}\quad \bGam_{0,S}(\bx)=
\dis\frac{1}{\mu_0\K_S^2}(\K_S^2\ID+\DD_\bx)G_S^\omega(\bx).
\end{equation}
Note that $\nabla \cdot \bGam_{0,S} = \mathbf{0}$ and $\nabla
\times \bGam_{0,P} = \mathbf{0}$.

Let  us define the single layer potential $\S_\Omega^\omega$ associated with
 $(\OL_{\lambda_0,\mu_0}+\rho_0\omega^2)$ by
\begin{eqnarray}
\label{S-def} \S_\Omega^\omega[\mathbf{\Phi}](\bx)
&:=&\dis\int_{\partial\Omega}\bGam_0(\bx-\by)\mathbf{\Phi}(\by)d\sigma(\by),
\qquad \qquad \bx\in\RR^d, \label{D-def}
\end{eqnarray}
and the boundary integral operator $\Kcal_\Omega^\omega$ by
\begin{eqnarray}
\Kcal_\Omega^\omega[\mathbf{\Phi}](\bx) &:=& {\rm p.v.}
\dis\int_{\partial
\Omega}\frac{\partial}{\partial\nu_\by}\bGam_0(\bx-\by)\mathbf{\Phi}(\by)d\sigma(\by),\quad\text{a.e.
} \bx\in\partial\Omega
\end{eqnarray}
for any function $\mathbf{\Phi}\in \left(L^2(\partial\Omega
)\right)^d$, where ${\rm p.v.}$ stands for Cauchy principle value.

Let $(\Kcal^\omega_\Omega)^*$ be the adjoint operator of
$\Kcal^{-\omega}_\Omega$ on $\left(L^2(\partial\Omega )\right)^d$,
\emph{i.e.},
$$
(\Kcal_\Omega^\omega)^*[\mathbf{\Phi}](\bx) = {\rm p.v.}
\dis\int_{\partial \Omega}\frac{\partial}{\partial\nu_\bx}
\bGam_0(\bx-\by)\mathbf{\Phi}(\by)d\sigma(\by),\quad\text{a.e. }
\bx\in\partial\Omega .
$$
It is well known, see for instance  \cite[Section
3.4.3]{ammaribook}, that  the single layer potential,
$\S_\Omega^\omega$, enjoys the following jump conditions:
\begin{eqnarray}
\label{jumps}
\dis\frac{\partial(\S_\Omega^\omega[\mathbf{\Phi}])}{\partial
\nu}\Big|_{\pm}(\bx)=
\left(\pm\frac{1}{2}I+(\mathcal{K}_\Omega^\omega)^*\right)[\mathbf{\Phi}](\bx),\qquad\text{a.e.
} \bx\in\partial \Omega.
\end{eqnarray}

Let $\NN^\omega(\bx,\by)$, for all $\by\in\Omega$, be the Neumann
solution associated with $(\lambda_0,\mu_0,\rho_0)$ in $\Omega$,
\emph{i.e.},
\begin{equation}
\left\{
\begin{array}{ll}
(\OL_{\lambda_0,\mu_0}+\rho_0\omega^2)\NN^\omega (\bx,\by) = -
\delta_\by(\bx)\ID, &  \bx\in\Omega,\quad \bx\neq \by,
\\\nm
\dis\frac{\partial\NN^\omega }{\partial\nu}(\bx,\by)=0 &
\bx\in\partial\Omega.
\end{array}
\right.
\end{equation}
Then, by slightly modifying the proof for the case of zero
frequency in \cite{AK-Pol}, one can show that the following result
holds.
\begin{lem}\label{N-K-Gamma}
For all $\bx\in\partial\Omega$ and $\by\in\Omega$, we have
\begin{equation}
\left(\dis - \frac{1}{2}{I} + \Kcal_\Omega^\omega \right)
[\NN^\omega(\cdot,\by)](\bx)=\bGam_0(\bx - \by).
\end{equation}
\end{lem}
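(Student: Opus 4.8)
The plan is to prove the sharper, pole-free identity
\[
\mathcal{D}_\Omega^\omega[\NN^\omega(\cdot,\by)](\bx)=\bGam_0(\bx-\by),\qquad \bx\in\RR^d\setminus\overline\Omega,\ \by\in\Omega,
\]
where $\mathcal{D}_\Omega^\omega[\mathbf{\Phi}](\bx):=\int_{\partial\Omega}\frac{\partial}{\partial\nu_\by}\bGam_0(\bx-\by)\,\mathbf{\Phi}(\by)\,d\sigma(\by)$ is the elastic double layer potential, and then to pass to the limit $\bx\to\partial\Omega$ from the exterior. Since the exterior trace of $\mathcal{D}_\Omega^\omega$ on $\partial\Omega$ equals $-\frac12 I+\Kcal_\Omega^\omega$ (the elastic double-layer analogue of the jump relations \eqnref{jumps}; see \cite[Section~3.4.3]{ammaribook}), and since $\bGam_0(\cdot-\by)$ extends continuously across $\partial\Omega$ because its pole $\by$ lies in the open set $\Omega$, this limit gives exactly $\bigl(-\frac12 I+\Kcal_\Omega^\omega\bigr)[\NN^\omega(\cdot,\by)](\bx)=\bGam_0(\bx-\by)$ for $\bx\in\partial\Omega$.

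For the pole-free identity I would fix $\by\in\Omega$ and set $\bw:=\NN^\omega(\cdot,\by)+\bGam_0(\cdot-\by)$ on $\Omega$. Because $(\OL_{\lambda_0,\mu_0}+\rho_0\omega^2)\NN^\omega(\cdot,\by)=-\delta_\by\ID$ and $(\OL_{\lambda_0,\mu_0}+\rho_0\omega^2)\bGam_0(\cdot-\by)=\delta_\by\ID$, the two Dirac masses cancel, so $\bw$ solves $(\OL_{\lambda_0,\mu_0}+\rho_0\omega^2)\bw=0$ throughout $\Omega$; moreover $\partial_\nu\bw=\partial_\nu\bGam_0(\cdot-\by)$ on $\partial\Omega$ (using the homogeneous Neumann condition $\partial_\nu\NN^\omega(\cdot,\by)=0$), while $\bw|_{\partial\Omega}=\NN^\omega(\cdot,\by)|_{\partial\Omega}+\bGam_0(\cdot-\by)|_{\partial\Omega}$. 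I would then write the Green (Betti) representation formula for $\OL_{\lambda_0,\mu_0}+\rho_0\omega^2$ over $\Omega$, evaluated at an exterior point $\bx\notin\overline\Omega$: applied to the homogeneous field $\bw$ it reads $0=\mathcal{D}_\Omega^\omega[\bw|_{\partial\Omega}](\bx)-\S_\Omega^\omega[\partial_\nu\bw|_{\partial\Omega}](\bx)$, and applied to $\bGam_0(\cdot-\by)$—whose only source $\delta_\by\ID$ sits at the interior point $\by$ and is accounted for by excising a ball $B_\varepsilon(\by)$ and letting $\varepsilon\to0$—it reads $0=\bGam_0(\bx-\by)+\mathcal{D}_\Omega^\omega[\bGam_0(\cdot-\by)|_{\partial\Omega}](\bx)-\S_\Omega^\omega[\partial_\nu\bGam_0(\cdot-\by)|_{\partial\Omega}](\bx)$. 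Inserting $\partial_\nu\bw=\partial_\nu\bGam_0(\cdot-\by)$ and $\bw|_{\partial\Omega}-\bGam_0(\cdot-\by)|_{\partial\Omega}=\NN^\omega(\cdot,\by)|_{\partial\Omega}$ into the first identity and subtracting the second cancels both the single layer potentials and the $\mathcal{D}_\Omega^\omega[\bGam_0(\cdot-\by)|_{\partial\Omega}]$ terms, leaving $\mathcal{D}_\Omega^\omega[\NN^\omega(\cdot,\by)](\bx)=\bGam_0(\bx-\by)$ for all $\bx\notin\overline\Omega$, as wanted. (Alternatively, one may argue by uniqueness: the field equal to $\bw$ in $\Omega$ and to $\bGam_0(\cdot-\by)$ in $\RR^d\setminus\overline\Omega$ has, across $\partial\Omega$, a continuous co-normal derivative and jump $-\NN^\omega(\cdot,\by)|_{\partial\Omega}$, hence coincides with $\mathcal{D}_\Omega^\omega[\NN^\omega(\cdot,\by)]$ because their difference is an entire outgoing elastic field.)

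This is the structure of the static ($\omega=0$) computation of \cite{AK-Pol}, and the ``slight modification'' amounts to two routine matters. First, the zeroth-order term $\rho_0\omega^2$ is locally harmless: the singularity of $\bGam_0$ at its pole agrees with that of the static Lam\'e kernel, so the $B_\varepsilon(\by)$-excision in Betti's formula reproduces $\bGam_0(\bx-\by)$ as $\varepsilon\to0$ exactly as in the static case, and the extra zeroth-order volume terms cancel identically in the integrand. Second, the polynomial decay of the static kernels must be replaced by Kupradze's outgoing radiation conditions \cite{Kup}, which is what legitimizes evaluating the representation formula at an exterior point (and the continuity, or the uniqueness, invoked in the trace step). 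I expect the one genuine difficulty—hence the step to carry out most carefully—to be the matrix-valued nature of $\bGam_0$ and $\NN^\omega$: one must be consistent about how the co-normal derivative acts on the matrix $\by\mapsto\bGam_0(\bx-\by)$ and about the transpositions entering the matrix form of Betti's identity, so that the operator surviving in the boundary limit is precisely $\Kcal_\Omega^\omega$ of Section~\ref{sec:mathform:not} and not its adjoint $(\Kcal_\Omega^\omega)^*$; here the symmetries $\bGam_0(\bz)^{T}=\bGam_0(\bz)=\bGam_0(-\bz)$ of the fundamental solution are used. With those conventions fixed, every step above is standard elastic potential theory.
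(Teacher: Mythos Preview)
The paper does not actually give a proof of this lemma; it only states that it follows ``by slightly modifying the proof for the case of zero frequency in \cite{AK-Pol}.'' Your proposal is precisely such a modification: the double-layer identity $\mathcal{D}_\Omega^\omega[\NN^\omega(\cdot,\by)]=\bGam_0(\cdot-\by)$ on the exterior, obtained via Betti's representation formula and then traced onto $\partial\Omega$ using the exterior jump relation for the double layer, is the standard route in the static case, and you have correctly identified the two adjustments needed at nonzero frequency (the local singularity structure is unchanged, and Kupradze's radiation conditions replace decay at infinity). Your caution about matrix conventions and the distinction between $\Kcal_\Omega^\omega$ and $(\Kcal_\Omega^\omega)^*$ is well placed. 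In short, your argument is correct and is exactly the ``slight modification'' the paper alludes to.
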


For $i,j\in\{1,\cdots,d\}$, let $\bv_{ij}$ be the solution to
\begin{equation}
\label{Wpq}
\left\{
\begin{array}{ll}
\OL_{\lambda_0,\mu_0}\bv_{ij}=0  & \text{in
}\RR^d\backslash\overline B,
\\\nm
\OL_{\lambda_1,\mu_1}\bv_{ij}=0  & \text{in } B,
\\\nm
\bv_{ij}{\big|_-}=\bv_{ij}{\big|_+} & \text{on } \partial B,
\\\nm
\dis\frac{\partial\bv_{ij}}{\partial\widetilde\nu}{\Big|_-}=\dis\frac{\partial\bv_{ij}}{\partial\nu}{\Big|_+}
& \text{on } \partial B,
\\\nm
\bv_{ij}(\bx)-x_i\be_j = O\left(|\bx|^{1-d}\right) &\text{as}\quad
|\bx|\to\infty,
\end{array}
\right.
\end{equation}
where $(\be_1,\cdots,\be_d)$ denotes the standard basis for
$\RR^d$. Then the elastic moment tensor (EMT)
$\MM:=\left(m_{ijpq}\right)^d_{i,j,p,q=1}$ associated with domain
$B$ and the Lam\'e parameters $(\lambda_0,\mu_0;\lambda_1,\mu_1)$
is defined by
\begin{equation}
m_{ijpq} =\int_{\partial
B}\left[\frac{\partial(x_p\be_q)}{\partial\widetilde\nu}
-\frac{\partial(x_p\be_q)}{\partial\nu}\right]\cdot \bv_{ij}\,d\sigma,
\end{equation}
see \cite{AK-Pol, AKNT-02}. In particular, for a circular or a
spherical inclusion, $\MM$ can be expressed as
\begin{equation}
\label{M-disk2} \MM = a \II_4 + b \ID \otimes \ID,
\end{equation}
or equivalently as
$$
m_{ijkl} = \frac{a}{2}(\delta_{ik} \delta_{jl} +
\delta_{il}\delta_{jk}) + b \delta_{ij} \delta_{kl},
$$
 for some constants $a$ and $b$ depending only on
$\lambda_0,\lambda_1,\mu_0,\mu_1$ and the space dimension $d$
\cite[Section 7.3.2]{ammaribook}. Here $\II_4$ is the identity
$4$-tensor. Note that for any $d\times d$ symmetric matrix
$\mathbf{A}$, $\II_4 (\mathbf{A})=\mathbf{A}$. Furthermore,
throughout this paper we make the assumption that $\mu_1 \geq
\mu_0$ and $\lambda_1 \geq \lambda_0$ in order to insure that the
constants $a$ and $b$ are positive.

It is known that EMT, $\MM$, has the following symmetry property:
\begin{equation}
m_{ijpq}=m_{pqij}=m_{jipq}=m_{ijqp},
\end{equation}
which allows us to identify  $\MM$ with a symmetric linear transformation on the space of symmetric
 $d\times d$ matrices. It also satisfies the positivity property (positive or
 negative definiteness) on the space of symmetric matrices \cite{AK-Pol, AKNT-02}.

Let $\bU$ be the background solution associated with
$(\lambda_0,\mu_0,\rho_0)$ in $\Omega$,  \emph{i.e.},
\begin{equation}\label{Background-Sol}
\left\{
\begin{array}{ll}
(\OL_{\lambda_0,\mu_0}+\rho_0\omega^2)\bU =0, & \text{on }\Omega,
\\\nm
\dis\frac{\partial\bU}{\partial\nu}=\bg & \text{on }\partial\Omega,
\end{array}
\right.
\end{equation}
Then, the following result can be obtained using analogous
arguments as in \cite{AK-Helm, AK-Pol}; see \cite{Direct}. Here
and throughout this paper
$$\mathbf{A} : \mathbf{B} = \sum_{i,j=1}^d a_{ij} b_{ij}$$
for matrices $\mathbf{A}=(a_{ij})_{i,j=1}^d$ and
$\mathbf{B}=(b_{ij})_{i,j=1}^d$.

\begin{thm}\label{Thm-Asymptotic}
Let $\bu$ be the solution to \eqref{Prob_Trans}, $\bU$ be the background solution defined by
 \eqref{Background-Sol} and $\rho_0\omega^2$ be different from the Neumann eigenvalues
  of the operator $-\OL_{\lambda_0,\mu_0}$ in $\left(L^2(\Omega)\right)^d$. Let $D$ be
   given by \eqref{D-Parametrization} and the conditions \eqref{D-Omega-dist} and  \eqref{Lame-Conditions}
   are satisfied. Then, for $\omega\delta\ll 1$, the following asymptotic expansion holds uniformly for all
    $\bx\in\partial\Omega$:
\begin{eqnarray}\label{Asymptotic-Exp}
&&\bu(\bx)-\bU(\bx) = - \delta^d\Big(\nabla\bU(\bz_a) : \MM(B)
\nabla_{\bz_a} \NN^\omega (\bx,\bz_a)
\\\nm
&&\qquad\qquad\qquad\qquad\qquad\qquad
+\omega^2(\rho_0-\rho_1)|B|\NN^\omega(\bx,\bz_a) \bU(\bz_a) \Big)
+O(\delta^{d+1}). \nonumber
\end{eqnarray}
\end{thm}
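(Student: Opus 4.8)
The plan is to derive the asymptotic expansion \eqref{Asymptotic-Exp} by representing the perturbation $\bu-\bU$ through a layer-potential ansatz on $\partial D$, rescaling to the reference domain $B$, and isolating the leading-order term governed by the elastic moment tensor. First, I would write $\bu$ inside and outside $D$ using single-layer potentials: set $\bu = \bU + \S_D^\omega[\bpsi] + (\text{propagation to }\partial\Omega)$ in $\Dc$ and $\bu = \S_D^{\omega,(1)}[\bphi]$ in $D$, where $\S_D^{\omega,(m)}$ is the single-layer potential for $(\OL_{\lambda_m,\mu_m}+\rho_m\omega^2)$. Imposing the transmission conditions on $\partial D$ (continuity of $\bu$ and of the conormal derivative) and the Neumann condition on $\partial\Omega$ yields a system of integral equations for the densities $(\bphi,\bpsi)$. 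Because $D = \delta B + \bz_a$, I would change variables $\bx = \bz_a + \delta\bX$ and expand: the operators $\S_D^{\omega,(m)}$ and $(\tfrac12 I + (\Kcal_D^{\omega,(m)})^*)$ converge, as $\delta\to 0$ and $\omega\delta\ll 1$, to their static counterparts on $\partial B$ (the low-frequency/small-size regime makes the $\rho_m\omega^2$ term subdominant at this order), while the incident field is frozen to its Taylor data $\bU(\bz_a) + \delta\,\nabla\bU(\bz_a)\bX + \cdots$ near $\bz_a$.

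The second step is to recognize the two distinct contributions at order $\delta^d$. The constant part $\bU(\bz_a)$ of the incident field couples with the density through the \emph{density contrast}: since $\bu$ solves $\OL_{\lambda_1,\mu_1}\bu + \rho_1\omega^2\bu = 0$ inside and $\OL_{\lambda_0,\mu_0}\bu+\rho_0\omega^2\bu=0$ outside, the mismatch $(\rho_0-\rho_1)\omega^2$ integrated over $D$ produces a monopole-type source of strength $\omega^2(\rho_0-\rho_1)|D| = \omega^2(\rho_0-\rho_1)|B|\delta^d$ acting on $\bU(\bz_a)$. The linear part $\nabla\bU(\bz_a)\bX$ couples through the \emph{elasticity contrast}: the rescaled transmission problem on $\partial B$ with this linear boundary data is solved (up to the relevant order) by $\bv_{ij}$ from \eqref{Wpq}, and by the definition of the EMT the resulting far-field dipole is exactly $\delta^d\,\nabla\bU(\bz_a):\MM(B)$ applied against the gradient of the background fundamental solution. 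Converting far-field into boundary values on $\partial\Omega$ is done via the Neumann function: using Lemma \ref{N-K-Gamma} together with the jump relations \eqref{jumps}, one propagates the dipole and monopole sources located at $\bz_a$ to $\partial\Omega$, which replaces $\bGam_0(\bx-\bz_a)$ and its gradient by $\NN^\omega(\bx,\bz_a)$ and $\nabla_{\bz_a}\NN^\omega(\bx,\bz_a)$, giving precisely the two terms displayed in \eqref{Asymptotic-Exp}.

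The third step is bookkeeping of the error. The remainder $O(\delta^{d+1})$ comes from: (i) higher-order Taylor terms of $\bU$ near $\bz_a$; (ii) the difference between the frequency-dependent and static layer operators on $\partial B$, which contributes a relative $O((\omega\delta)^2)$ correction; and (iii) the difference between $\NN^\omega$ and the free-space $\bGam_0$ beyond the leading propagation term. Each is controlled by the invertibility of the limiting integral system on $\partial B$ — guaranteed by the coercivity encoded in \eqref{Lame-Conditions} and the positivity of the EMT — together with the uniform estimates coming from the collectively compact operator theory cited after \eqref{Prob_Trans} and the separation condition \eqref{D-Omega-dist}, which keeps all kernels smooth on $\partial\Omega$. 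The main obstacle is the careful matched-asymptotics argument in the second step: unlike the scalar case, the elastic single-layer potential mixes pressure and shear components, so the rescaled operators are $d\times d$ systems and one must verify that the leading coupling of $\nabla\bU(\bz_a)\bX$ with the densities reproduces the full symmetric EMT (all of $m_{ijpq}$, with the correct symmetries) rather than only a scalar polarizability; this is where slightly modifying the zero-frequency proof of \cite{AK-Pol} — tracking the vectorial structure and the frequency corrections simultaneously — requires the most care.
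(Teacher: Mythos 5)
Your sketch follows the standard layer-potential/rescaling argument that the paper itself relies on: the paper gives no proof of Theorem \ref{Thm-Asymptotic} but defers to the arguments of \cite{AK-Helm, AK-Pol} and \cite{Direct}, which proceed exactly as you describe (transmission integral equations on $\partial D$, rescaling to $B$, emergence of the EMT from the correctors $\bv_{ij}$ and of the monopole term from the density mismatch, propagation to $\partial\Omega$ via the Neumann function and Lemma \ref{N-K-Gamma}). The outline is correct and essentially the same approach as the cited proof.
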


As a direct consequence of  expansion \eqref{Asymptotic-Exp} and
Lemma \ref{N-K-Gamma}, the following result holds.

\begin{cor}\label{Cor-Asymptotic}
Under the assumptions of Theorem \ref{Thm-Asymptotic}, we have
\begin{eqnarray}\label{Asymptotic-Exp-Gam}
&&\left(\dis\frac{1}{2}{I}- \Kcal_\Omega^\omega \right)[\bu
-\bU](\bx) = \delta^d\Big(\nabla\bU(\bz_a) :
\MM(B)\nabla_{\bz_a}\bGam_0(\bx - \bz_a)
\\\nm
&& \qquad\qquad\qquad\qquad\qquad\qquad\qquad
+\omega^2(\rho_0-\rho_1)|B|\bGam_0(\bx - \bz_a) \bU(\bz_a)
\Big)+O(\delta^{d+1})\qquad\qquad \nonumber
\end{eqnarray}
uniformly with respect to $\bx\in\partial\Omega$.
\end{cor}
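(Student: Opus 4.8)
The plan is to derive Corollary~\ref{Cor-Asymptotic} directly from the asymptotic expansion \eqref{Asymptotic-Exp} of Theorem~\ref{Thm-Asymptotic} together with the representation of the Neumann function provided by Lemma~\ref{N-K-Gamma}. The key observation is that $\bu-\bU$ satisfies the homogeneous Neumann condition on $\partial\Omega$ (indeed both $\bu$ and $\bU$ have co-normal derivative $\bg$ there), so $\bu-\bU$ is, up to the source region near $\bz_a$, a solution of $(\OL_{\lambda_0,\mu_0}+\rho_0\omega^2)\bv=0$ in $\Omega$ with vanishing traction on the boundary. The strategy is therefore to act on the leading-order term of \eqref{Asymptotic-Exp} with the operator $\tfrac12 I-\Kcal_\Omega^\omega$ and use Lemma~\ref{N-K-Gamma} to convert the Neumann function $\NN^\omega$ into the free-space fundamental solution $\bGam_0$.

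Concretely, first I would recall that Lemma~\ref{N-K-Gamma} states $\bigl(-\tfrac12 I+\Kcal_\Omega^\omega\bigr)[\NN^\omega(\cdot,\by)](\bx)=\bGam_0(\bx-\by)$ for $\bx\in\partial\Omega$, which rewrites as $\bigl(\tfrac12 I-\Kcal_\Omega^\omega\bigr)[\NN^\omega(\cdot,\by)](\bx)=-\bGam_0(\bx-\by)$. Applying $\bigl(\tfrac12 I-\Kcal_\Omega^\omega\bigr)$ to both sides of \eqref{Asymptotic-Exp}, the two explicit terms on the right transform as follows: the density term $\omega^2(\rho_0-\rho_1)|B|\,\NN^\omega(\bx,\bz_a)\bU(\bz_a)$ becomes $-\omega^2(\rho_0-\rho_1)|B|\,\bGam_0(\bx-\bz_a)\bU(\bz_a)$, and the elasticity term involving $\nabla_{\bz_a}\NN^\omega(\bx,\bz_a)$ becomes $-\nabla\bU(\bz_a):\MM(B)\nabla_{\bz_a}\bGam_0(\bx-\bz_a)$, since the operator acts in the $\bx$-variable on $\partial\Omega$ and commutes with the $\bz_a$-gradient (one differentiates the identity of Lemma~\ref{N-K-Gamma} in $\bz_a$, noting that the differentiation is in a parameter well-separated from $\partial\Omega$ by \eqref{D-Omega-dist}, so it passes through the principal-value integral defining $\Kcal_\Omega^\omega$). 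The overall sign $-\delta^d$ in \eqref{Asymptotic-Exp} then cancels against the minus signs just produced, yielding the $+\delta^d(\cdots)$ on the right of \eqref{Asymptotic-Exp-Gam}.

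It remains to handle the remainder. The $O(\delta^{d+1})$ term in \eqref{Asymptotic-Exp} is uniform over $\bx\in\partial\Omega$ in, say, the $(L^\infty(\partial\Omega))^d$ or $(C^{1,\eta}(\partial\Omega))^d$ norm, and $\Kcal_\Omega^\omega$ is a bounded operator on the relevant trace space (it is a standard pseudodifferential/singular integral operator of order $0$ on the Lipschitz boundary $\partial\Omega$); hence $\bigl(\tfrac12 I-\Kcal_\Omega^\omega\bigr)$ maps the $O(\delta^{d+1})$ remainder to an $O(\delta^{d+1})$ remainder, again uniformly on $\partial\Omega$. Collecting the transformed leading term and this remainder gives exactly \eqref{Asymptotic-Exp-Gam}.

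The only genuinely delicate point is the commutation of $\bigl(\tfrac12 I-\Kcal_\Omega^\omega\bigr)$ with $\nabla_{\bz_a}$ applied to $\NN^\omega(\cdot,\bz_a)$: one must justify differentiating under the principal-value integral sign in the definition of $\Kcal_\Omega^\omega$. This is unproblematic because, by the separation hypothesis \eqref{D-Omega-dist}, the point $\bz_a$ stays a fixed positive distance $c_0$ away from $\partial\Omega$, so for $\by\in\partial\Omega$ the map $\bz_a\mapsto\NN^\omega(\by,\bz_a)$ (equivalently $\bz_a\mapsto\bGam_0(\by-\bz_a)$ plus a smooth correction) is smooth and its $\bz_a$-derivatives are bounded uniformly for $\by\in\partial\Omega$; the singularity of the kernel of $\Kcal_\Omega^\omega$ is in the $\bx$–$\by$ variables on $\partial\Omega$ and is untouched by the $\bz_a$-differentiation. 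Thus dominated convergence lets the $\bz_a$-gradient pass through, and the identity of Lemma~\ref{N-K-Gamma} may be differentiated in $\bz_a$ termwise, completing the argument.
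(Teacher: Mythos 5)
Your proposal is correct and follows exactly the route the paper intends: the paper presents the corollary as a direct consequence of applying $\bigl(\tfrac{1}{2}I-\Kcal_\Omega^\omega\bigr)$ to the expansion \eqref{Asymptotic-Exp} and invoking Lemma \ref{N-K-Gamma}, which is precisely what you do. Your additional care with the sign bookkeeping, the commutation of the operator (acting in $\bx$) with $\nabla_{\bz_a}$, and the boundedness of $\Kcal_\Omega^\omega$ on the remainder simply fills in details the paper leaves implicit.
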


\begin{rem} We have made use of the following conventions in \eqref{Asymptotic-Exp} and \eqref{Asymptotic-Exp-Gam}:
$$
\Big( \nabla\bU(\bz_a) :  \MM(B) \nabla_{\bz_a} \NN^\omega
(\bx,\bz_a) \Big)_{k} = \sum_{i,j=1}^d \Big( \partial_{i}
\bU_{j}(\bz_a) \sum_{p,q=1}^d m_{ijpq}\partial_{p}
\NN^\omega_{kq}(\bx,\bz_a) \Big),
$$
and
$$
\Big(\NN^\omega(\bx,\bz_a) \bU(\bz_a)\Big)_{k} = \sum_{i=1}^d
\NN_{ki}^\omega(\bx,\bz_a)  \bU_i(\bz_a).
$$
\end{rem}

\section{Imaging small inclusions using TD}\label{sec:TD}

In this section, we  consider a filtered quadratic misfit and
introduce a TD based imaging functional resulting therefrom and
analyze its performance when identifying true location $\bz_a$ of
the inclusion $D$.

For a search point $\bz^S$, let $\bu_{\bz^S}$ be the solution to
\eqref{Prob_Trans} in the presence of a trial inclusion
$D'=\delta' B'+\bz^S$ with parameters
$(\lambda'_1,\mu'_1,\rho_1')$, where $B'$ is chosen \emph{a
priori} and $\delta'$ is  small. Assume that
\begin{equation}\label{Lame-Prime-Conditions}
d\lambda'_1+2\mu'_1 >0,\quad \mu'_1 >0,\quad (\lambda_0-\lambda'_1)(\mu_0-\mu'_1)\geq 0.
\end{equation}
Consider the elastic counterpart of the filtered quadratic misfit
proposed by Ammari \emph{et al.} in \cite{AGJK-Top}, that is, the
following misfit:
\begin{equation}
\label{Ef-misfit}
\E_f[\bU](\bz^S)=\frac{1}{2}\int_{\partial\Omega}\left|\left(\dis\frac{1}{2}{I}-
\Kcal_\Omega^\omega
\right)[\bu_{\bz^S}-\bu_{\rm{meas}}](\bx)\right|^2d\sigma(\bx).
\end{equation}
As shown for  Helmholtz equations in \cite{AGJK-Top}, the
identification of the exact location of true inclusion using the
classical quadratic misfit
\begin{equation}\label{E-misfit}
\E[\bU](\bz^S)=\frac{1}{2}\int_{\partial\Omega}\big|(\bu_{\bz^S}-\bu_{\rm{meas}})(\bx)\big|^2
d\sigma(\bx)
\end{equation}
cannot be guaranteed and the post-processing of the data is
necessary. We show in the later part of this section that exact
identification can be achieved using filtered quadratic misfit
$\E_f$.

We emphasize that the post-processing compensates for the effects
of an imposed Neumann boundary condition on the displacement
field.


\subsection{Topological derivative of the filtered quadratic misfit}\label{sec:TD:top}

Analogously to Theorem \ref{Thm-Asymptotic}, the displacement
field $\bu_{\bz^S}$, in the presence of the trial inclusion at the
search location, can be expanded as
\begin{eqnarray}
&&\bu_{\bz^S}(x)-\bU(\bx) = - (\delta')^d\Big( \nabla\bU(\bz^S) :
\MM'(B') \nabla_{\bz^S}\NN^\omega(\bx,\bz^S) \nonumber
\\
&& \qquad \qquad \qquad \qquad \qquad \qquad +\omega^2(\rho_0 -
\rho'_1)|B'|\NN^\omega(\bx,\bz^S) \bU(\bz^S) \Big) +
O\left((\delta')^{d+1}\right),
\end{eqnarray}
for a small $\delta'>0$, where $\MM'(B')$  is the EMT associated
with the domain $B'$ and the  parameters
$(\lambda_0,\mu_0;\lambda'_1,\mu'_1)$. Following the arguments in
\cite{AGJK-Top}, we obtain, by using Corollary
\ref{Cor-Asymptotic} and the jump conditions \eqref{jumps}, that
\begin{eqnarray} \label{I}
\E_f[\bU](\bz^S)&=&
\frac{1}{2}\int_{\partial\Omega}\left|\left(\dis\frac{1}{2}{I}-
\Kcal_\Omega^\omega
\right)[\bU-\bu_{\rm{meas}}](\bx)\right|^2d\sigma(\bx) \nonumber
\\\nm
&& + (\delta')^d\Re e\left\{\nabla\bU(\bz^S):
\MM'(B')\nabla\bw(\bz^S)+\omega^2(\rho_0 - \rho'_1)|B'|\bU(\bz^S)
\cdot \bw(\bz^S)\right\} \nonumber
\\\nm
&&
+O\left((\delta\delta')^{d}\right)+O\left((\delta')^{2d}\right),
\end{eqnarray}
where the function $\bw$ is defined in terms of the measured data $\left(\bU-\bu_{\rm{meas}}\right)$ by
\begin{equation}\label{W-Def}
\bw(\bx)=\S_\Omega^\omega \bigg[\overline{\left(\dis\frac{1}{2}{I}
- \Kcal_\Omega^\omega \right) [\bu_{\rm{meas}} - \bU]}
\bigg](\bx),\quad \bx\in\Omega.
\end{equation}
The function $\bw$ corresponds to backpropagating inside $\Omega$
the boundary measurements of $\bU-\bu_{\rm{meas}}$. Substituting
\eqref{Asymptotic-Exp-Gam} in \eqref{W-Def}, we find that
\begin{eqnarray}
\bw(\bz^S)&=&\delta^d\Big(\nabla\overline{\bU}(\bz_a):\MM(B) \Big[
\int_{\partial\Omega}
\bGam_0(\bz^S-\bx)\nabla_{\bz_a}\overline{\bGam_0}(\bx-\bz_a)d\sigma(\bx)
\Big] \nonumber
\\\nm
&& +\omega^2(\rho_0 - \rho_1)|B|\Big[
\int_{\partial\Omega}\overline{\bGam_0}(\bx-\bz_a)\bGam_0(\bx-\bz^S)d\sigma(\bx)
\Big] \overline{\bU}(\bz_a) \Big) +O(\delta^{d+1}). \label{W-Exp}
\end{eqnarray}

\begin{defn}(Topological derivative of $\E_f$)
The TD imaging functional associated with $\E_f$ at a search point
$\bz^S\in\Omega$ is defined by
\begin{eqnarray}
\I_{\rm{TD}}[\bU](\bz^S)&:=&-\dis\frac{\partial\E_f[\bU](\bz^S)}{\partial(\delta')^d}\Big|_{(\delta')^d=0}
\label{TopDer-Ef-2}
\end{eqnarray}
\end{defn}

The functional $\I_{\rm{TD}}\left[\bU\right](\bz^S)$ at every
search point $\bz^S\in \Omega$ synthesizes the sensitivity of the
misfit $\E_f$ relative to the insertion of an elastic inclusion
$D'=\bz^S+\delta'B'$ at that point. The maximum of
$\I_{\rm{TD}}\left[\bU\right](\bz^S)$ corresponds to the point at
which the insertion of an inclusion centered at that point
maximally decreases the misfit $\E_f$. The location of the maximum
of $\I_{\rm{TD}}\left[\bU\right](\bz^S)$ is, therefore, a good
estimate of the location $\bz_a$ of the true inclusion, $D$, that
determines the measured field $\bu_{\rm{meas}}$. Note that from
(\ref{I}) it follows that
\begin{eqnarray}
\I_{\rm{TD}}[\bU](\bz^S)&=& - \Re e\Big\{\nabla\bU(\bz^S):
\MM'(B')\nabla\bw(\bz^S)+\omega^2(\rho_0 -
\rho'_1)|B'|\bU(\bz^S)\cdot\bw(\bz^S)\Big\}, \label{TopDer-Ef}
\end{eqnarray} where $\bw$ is given by (\ref{W-Exp}).

\subsection{Sensitivity analysis of TD}
\label{sec:TD:sensitivity} In this section, we explain why TD
imaging functional $\I_{\rm{TD}}$ may not attain its maximum at
the location $\bz_a$ of the true inclusion. Notice that the
functional $\I_{\rm{TD}}$ consists of two terms: a density
contrast term and an elasticity contrast term with background
material. For simplicity and for purely analysis sake, we consider
two special cases when we have only the density contrast or the
elasticity contrast with reference medium.

\subsubsection{Case I: Density contrast}
\label{sec:TD:sensitivity:I}
Suppose $\lambda_0 = \lambda_1$ and $\mu_0 = \mu_1$. In this case, the wave function $\bw$ satisfies
\begin{equation} \label{39}
\bw(\bz^S)\simeq \delta^d\left(\omega^2(\rho_0 - \rho_1)|B|\left[
\int_{\partial\Omega}\overline{\bGam_0}(\bx-\bz_a)\bGam_0(\bx-\bz^S)d\sigma(\bx)
\right] \overline{\bU}(\bz_a) \right).
\end{equation}
Consequently, the imaging functional $\I_{\rm{TD}}$ at
$\bz^S\in\Omega$ reduces to
\begin{eqnarray}
\I_{\rm{TD}}\left[\bU\right](\bz^S) \simeq C\,\omega^4\, \Re e
\bigg\{ \bU(\bz^S)\cdot \left[\left(\int_{\partial\Omega}
\overline{\bGam_0}(\bx-\bz_a)\bGam_0(\bx-\bz^S) d\sigma(\bx)
\right) \overline{\bU}(\bz_a) \right] \bigg\},
\end{eqnarray}
where
\begin{equation}
\label{C-const} C =  \delta^d  (\rho_0 - \rho'_1)(\rho_0 -
\rho_1)|B'||B|.
\end{equation}
Throughout this paper we assume that
$$(\rho_0 - \rho'_1)(\rho_0 -
\rho_1) \geq 0.$$

Let us recall the following estimates from \cite[Proposition
2.5]{TrElastic}, which hold as the distance between the points
$\bz^S$ and $\bz_a$ and the boundary $\partial \Omega$ goes to
infinity.
\begin{lem}(Helmholtz - Kirchhoff identities)\label{lem-HKI}
For $\bz^S,\bz_a\in\Omega$ far from the boundary $\partial
\Omega$, compared to the wavelength of the wave impinging upon
$\Omega$, we have
\begin{eqnarray*}
 \ds\int_{\partial\Omega} \overline{\bGam_{0,\alpha}}(\bx-\bz_a)\bGam_{0,\alpha} (\bx-\bz^S)d\sigma(\bx)
 &\simeq& -
 \dfrac{1}{c_\alpha \omega}\Im m \left\{ \bGam_{0,\alpha}(\bz^S-\bz_a) \right\},
 \\\nm
 \ds \int_{\partial\Omega}  \overline{\bGam_{0,\alpha}}(\bx-\bz_a) \bGam_{0,\beta} (\bx-\bz^S)d\sigma(\bx)
 &\simeq&  0,\quad \alpha\neq\beta.
\end{eqnarray*}
\end{lem}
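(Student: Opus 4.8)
The plan is to reduce both estimates to the scalar Helmholtz--Kirchhoff identity and then restore the tensorial structure through the decomposition \eqref{211}. One preliminary point: ``far from $\partial\Omega$'' will be taken in the asymptotic sense that $\partial\Omega$ may be replaced by a large sphere $\partial B(\mathbf 0,R)$ with $R\to\infty$; I would justify this replacement by a localization argument that uses only the fact that $\bGam_{0,\alpha}(\cdot-\bz)$ is an outgoing solution of the homogeneous elastic system away from $\bz$.

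First I would record the scalar identity. Fix $\alpha$ and write $\K=\K_\alpha$, $G=G^\omega_\alpha$. Since $\overline{G(\cdot-\bz_a)}$ and $G(\cdot-\bz^S)$ are solutions of the Helmholtz equation with wavenumber $\K$ in $\Omega$, away from the interior points $\bz_a$ and $\bz^S$, Green's second identity yields
\begin{equation*}
\int_{\partial\Omega}\Big(\overline{G(\bx-\bz_a)}\,\partial_{\bn}G(\bx-\bz^S)-G(\bx-\bz^S)\,\partial_{\bn}\overline{G(\bx-\bz_a)}\Big)\,d\sigma(\bx)=-2i\,\Im m\,G(\bz^S-\bz_a).
\end{equation*}
On the large sphere the Sommerfeld condition gives $\partial_{\bn}G\simeq i\K G$, whence the left-hand side is $\simeq 2i\K\int_{\partial\Omega}\overline{G(\bx-\bz_a)}\,G(\bx-\bz^S)\,d\sigma(\bx)$, so that
\begin{equation*}
\int_{\partial\Omega}\overline{G^\omega_\alpha(\bx-\bz_a)}\,G^\omega_\alpha(\bx-\bz^S)\,d\sigma(\bx)\simeq-\frac{1}{\K_\alpha}\,\Im m\,G^\omega_\alpha(\bz^S-\bz_a).
\end{equation*}

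For the diagonal term I would insert \eqref{211}. The operators $\DD_\bx$ and $\K_S^2\ID+\DD_\bx$ have constant coefficients, and $\nabla_\bx G^\omega_\alpha(\bx-\bz)=-\nabla_\bz G^\omega_\alpha(\bx-\bz)$, so in $\int_{\partial\Omega}\overline{\bGam_{0,\alpha}}(\bx-\bz_a)\,\bGam_{0,\alpha}(\bx-\bz^S)\,d\sigma$ I can pull the differentiations out of the integral, those acting on the first factor becoming $\bz_a$-derivatives and those on the second $\bz^S$-derivatives. What is left is a fixed constant-coefficient differential operator in $(\bz_a,\bz^S)$ applied to $-\K_\alpha^{-1}\,\Im m\,G^\omega_\alpha(\bz^S-\bz_a)$; since $\Im m\,G^\omega_\alpha$ is a \emph{smooth} function on $\RR^d$ satisfying $\Delta\,\Im m\,G^\omega_\alpha=-\K_\alpha^2\,\Im m\,G^\omega_\alpha$, each mixed fourth-order term collapses to a second-order one, and after collecting terms I expect to be left with $\K_\alpha\big(\K_\alpha^2\ID+\DD\big)\Im m\,G^\omega_\alpha(\bz^S-\bz_a)$ up to the prefactor $(\mu_0\K_S^2)^{-2}$; invoking \eqref{211} once more together with $\mu_0\K_S^2=\rho_0\omega^2$ and $\K_\alpha=\omega/c_\alpha$ should identify this with $-(c_\alpha\omega)^{-1}\,\Im m\,\bGam_{0,\alpha}(\bz^S-\bz_a)$.

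For the cross term the scalar identity does not help, since $G^\omega_P$ and $G^\omega_S$ carry different wavenumbers; the cancellation there is polarizational. On the large sphere the leading far-field form of \eqref{211} is, up to scalar factors, $\bGam_{0,P}(\bx-\bz)\sim G^\omega_P(\bx)\,e^{-i\K_P\hx\cdot\bz}\,(\hx\otimes\hx)$ (longitudinal, consistent with $\nabla\times\bGam_{0,P}=\mathbf 0$) and $\bGam_{0,S}(\bx-\bz)\sim G^\omega_S(\bx)\,e^{-i\K_S\hx\cdot\bz}\,(\ID-\hx\otimes\hx)$ (transverse, consistent with $\nabla\cdot\bGam_{0,S}=0$). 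In the matrix product $\overline{\bGam_{0,P}}(\bx-\bz_a)\,\bGam_{0,S}(\bx-\bz^S)$ the leading contribution is proportional to $(\hx\otimes\hx)(\ID-\hx\otimes\hx)=\mathbf 0$, while the remaining terms are $O(|\bx|^{-d})$ and integrate to $O(R^{-1})\to0$, giving $\int_{\partial\Omega}\overline{\bGam_{0,\alpha}}(\bx-\bz_a)\,\bGam_{0,\beta}(\bx-\bz^S)\,d\sigma\simeq\mathbf 0$ for $\alpha\neq\beta$. I expect the main difficulty to be making the symbol ``$\simeq$'' rigorous, i.e.\ controlling the reduction to a large sphere and the uniform use of the leading-order Sommerfeld/far-field asymptotics as the distances from $\bz^S,\bz_a$ to $\partial\Omega$ grow; the diagonal computation is then routine bookkeeping, and the polarization cancellation in the cross term, though conceptually central, is short.
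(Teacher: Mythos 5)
Your strategy is viable, and it is worth pointing out that the paper itself does not prove this lemma at all: it imports it verbatim from \cite[Proposition 2.5]{TrElastic}. The proof in that source (and the one implicitly relied upon later in this paper, where the conormal far-field approximation \eqref{eq:gsommer} is recorded) goes through the \emph{tensorial} Betti--Green identity for the full outgoing Green tensor, of the form $\int_{\partial\Omega}\big[\overline{\bGam_0}\,\tfrac{\partial\bGam_0}{\partial\nu}-\tfrac{\partial\overline{\bGam_0}}{\partial\nu}\,\bGam_0\big]\,d\sigma=-2i\,\Im m \left\{\bGam_0(\bz^S-\bz_a)\right\}$, followed by $\tfrac{\partial\bGam_{0,\alpha}}{\partial\nu}\simeq i\rho_0 c_\alpha\omega\,\bGam_{0,\alpha}$ and the suppression of the $P$--$S$ cross terms. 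Your reduction of the diagonal term to the \emph{scalar} Helmholtz--Kirchhoff identity, with the constant-coefficient operators of \eqref{211} pulled through the boundary integral and the mixed fourth-order derivatives collapsed via $\Delta\,\Im m\, G^\omega_\alpha=-\K_\alpha^2\,\Im m\, G^\omega_\alpha$, is a genuinely different and more elementary route: it avoids the elastic Betti formula entirely and makes the algebra transparent. What it costs is the need to check that the error in the scalar Sommerfeld approximation stays small after four differentiations in $(\bz_a,\bz^S)$ (it does: each derivative of the boundary integrand costs at most a factor $\K_\alpha$, so the relative error $O(1/(\K_\alpha\,\mathrm{dist}))$ is preserved), which you correctly identify as the main technical burden.

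Two caveats. First, carry your bookkeeping to the end: using $\mu_0\K_S^2=\rho_0\omega^2$, your computation actually produces $-\frac{1}{\rho_0 c_\alpha\omega}\Im m\left\{\bGam_{0,\alpha}(\bz^S-\bz_a)\right\}$, i.e.\ an extra factor $1/\rho_0$ relative to the statement. The lemma as written (and \eqref{eq:gsommer}) is consistent with the normalization $\rho_0=1$, so this is a convention mismatch rather than an error, but you should not assert that the constants ``identify'' without noticing it. Second, on the cross term: on a genuine $\partial\Omega$, as opposed to a large sphere centered in the search region, the longitudinal and transverse projectors are built from the distinct directions $\widehat{\bx-\bz_a}$ and $\widehat{\bx-\bz^S}$, so their product is not identically zero pointwise but only $O\big(|\bz^S-\bz_a|/\mathrm{dist}(\bz^S,\partial\Omega)\big)$; you need either this smallness (which holds in the intended regime, where the search points stay in a compact set while the boundary is many wavelengths away) or the complementary non-stationary-phase argument exploiting $\K_P\neq\K_S$. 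Neither issue is fatal, but both must appear in a complete write-up.
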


Therefore, by virtue of (\ref{210}) and Lemma \ref{lem-HKI}, we
can easily get
\begin{eqnarray}
\I_{\rm{TD}}\left[\bU\right](\bz^S) \simeq  \ds -  C\,\omega^3\,
\Re e \,  \bigg\{\bU(\bz^S)\cdot\left[\Im m \left\{\dfrac{1}{c_P}
\bGam_{0,P}(\bz^S-\bz_a) + \dfrac{1}{c_S} \bGam_{0,S}(\bz^S-\bz_a)
\right\} \overline{\bU}(\bz_a) \right] \bigg\}.
\end{eqnarray}

Let $(\be_{\theta_1},\be_{\theta_2},\ldots, \be_{\theta_n})$ be
$n$ uniformly distributed directions over the unit disk or sphere,
and denote by $\bU_j^P$ and $\bU_j^S$ respectively the plane $P-$
and $S-$waves, that is,
\begin{equation}
\bU_j^P(\bx)  =  e^{i \K_P \bx \cdot \be_{\theta_j}}\be_{\theta_j}
\quad\text{and}\quad \bU_j^S(\bx) = e^{i \K_S \bx \cdot
\be_{\theta_j}} \be_{\theta_j}^{\perp} \label{plane-waves}
\end{equation}
for $d=2$. In three dimensions, we set $$\bU_{j,l}^S(\bx) = e^{i
\K_S \bx \cdot \be_{\theta_j}} \be_{\theta_j}^{\perp, l}, \quad
l=1,2,$$ where $(\be_{\theta_j}, \be_{\theta_j}^{\perp, 1},
\be_{\theta_j}^{\perp, 2})$ is an orthonormal basis of $\RR^3$.
For ease of notation, in three dimensions, $
\I_{\rm{TD}}[\bU_j^S](\bz^{S})$ denotes $\sum_{l=1}^2
\I_{\rm{TD}}[\bU_{j,l}^S](\bz^{S})$.

We have
\begin{equation} \label{planesum}
\frac{1}{n} \sum_{j=1}^n e^{i \K_\alpha \bx \cdot \be_{\theta_j}}
\simeq  - 4 (\frac{\pi}{\K_\alpha})^{d-2} \Im m \,
G^\omega_\alpha(\bx)
\end{equation} for large $n$; see, for instance, \cite{AGJK-Top}. The following proposition holds.
\begin{prop}
\label{prop-TD-caseI} Let $\bU^\alpha_j$ be defined in
\eqref{plane-waves}, where $j=1,2,\cdots, n$, for $n$ sufficiently
large. Then, for all $\bz^S\in\Omega$ far from $\partial\Omega$,
\begin{eqnarray}
&&\ds \dfrac{1}{n} \sum_{j=1}^{n} \I_{\rm{TD}}[\bU_j^P](\bz^{S})
\simeq 4 \mu_0 C \omega^3  (\frac{\pi}{\K_P})^{d-2}
(\frac{\K_S}{\K_P})^2 \Bigg[
\dfrac{1}{c_P} \left| \Im m \left\{ \bGam_{0,P}(\bz^S-\bz_a)\right\}\right|^2
\nonumber
\\
&& \qquad\qquad\qquad\qquad\qquad\qquad + \dfrac{1}{c_S} \Im m
\left\{\bGam_{0,P}(\bz^S-\bz_a)\right\} : \Im m
\left\{\bGam_{0,S}(\bz^S-\bz_a)\right\} \Bigg],
\end{eqnarray}
and
\begin{eqnarray}
&&\ds \dfrac{1}{n} \sum_{j=1}^{n} \I_{\rm{TD}}[\bU_j^S](\bz^{S})
\simeq
 4 \mu_0 C \omega^3  (\frac{\pi}{\K_S})^{d-2} \Bigg[
\dfrac{1}{c_S}\left| \Im m
\left\{\bGam_{0,S}(\bz^S-\bz_a)\right\}\right|^2 \nonumber
\\
&& \qquad\qquad\qquad\qquad\qquad\qquad + \dfrac{1}{c_P}\Im m
\left\{\bGam_{0,P}(\bz^S-\bz_a)\right\}  : \Im m
\left\{\bGam_{0,S}(\bz^S-\bz_a)\right\} \Bigg],
\end{eqnarray}
where $C$ is given by \eqref{C-const}.
\end{prop}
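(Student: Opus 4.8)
Throughout, work in the density‑contrast regime, so that $\I_{\rm{TD}}$ has already been reduced, via Lemma~\ref{lem-HKI}, to
\[
\I_{\rm{TD}}[\bU](\bz^S)\simeq -C\,\omega^3\,\Re e\left\{\bU(\bz^S)\cdot\Big[\Im m\big\{\tfrac1{c_P}\bGam_{0,P}(\bz^S-\bz_a)+\tfrac1{c_S}\bGam_{0,S}(\bz^S-\bz_a)\big\}\,\overline{\bU}(\bz_a)\Big]\right\}.
\]
The plan is to insert the plane waves \eqref{plane-waves} into this formula and average over the $n$ directions. Write $\BM:=\Im m\{\tfrac1{c_P}\bGam_{0,P}(\bz^S-\bz_a)+\tfrac1{c_S}\bGam_{0,S}(\bz^S-\bz_a)\}$, a fixed real symmetric matrix not depending on $j$. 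Since each $\be_{\theta_j}$ is real, taking $\bU=\bU_j^P$ gives $\bU_j^P(\bz^S)\cdot[\BM\,\overline{\bU_j^P}(\bz_a)]=e^{i\K_P(\bz^S-\bz_a)\cdot\be_{\theta_j}}\,\BM:(\be_{\theta_j}\otimes\be_{\theta_j})$, so that
\[
\frac1n\sum_{j=1}^n\I_{\rm{TD}}[\bU_j^P](\bz^S)\simeq -C\omega^3\,\Re e\Big\{\BM:\Big(\frac1n\sum_{j=1}^n e^{i\K_P(\bz^S-\bz_a)\cdot\be_{\theta_j}}\,\be_{\theta_j}\otimes\be_{\theta_j}\Big)\Big\};
\]
for $S$‑waves the convention $\I_{\rm{TD}}[\bU_j^S]=\sum_l\I_{\rm{TD}}[\bU_{j,l}^S]$ (one perpendicular direction when $d=2$, two when $d=3$) turns the $j$‑th term into $e^{i\K_S(\bz^S-\bz_a)\cdot\be_{\theta_j}}\,\BM:\big(\sum_l\be_{\theta_j}^{\perp,l}\otimes\be_{\theta_j}^{\perp,l}\big)$. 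Everything thus reduces to two direction‑averaged polarization tensors.

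The key step is to upgrade the scalar identity \eqref{planesum} to tensorial form. Using $\DD_\bx e^{i\K_\alpha\bx\cdot\be_\theta}=-\K_\alpha^2(\be_\theta\otimes\be_\theta)\,e^{i\K_\alpha\bx\cdot\be_\theta}$, one has $\frac1n\sum_j e^{i\K_P\bx\cdot\be_{\theta_j}}\,\be_{\theta_j}\otimes\be_{\theta_j}=-\K_P^{-2}\DD_\bx\big(\frac1n\sum_j e^{i\K_P\bx\cdot\be_{\theta_j}}\big)$; feeding \eqref{planesum} into this and invoking $\bGam_{0,P}=-(\mu_0\K_S^2)^{-1}\DD_\bx G_P^\omega$ from \eqref{211} yields
\[
\frac1n\sum_{j=1}^n e^{i\K_P\bx\cdot\be_{\theta_j}}\,\be_{\theta_j}\otimes\be_{\theta_j}\simeq -4\mu_0\Big(\frac{\pi}{\K_P}\Big)^{d-2}\Big(\frac{\K_S}{\K_P}\Big)^2\Im m\{\bGam_{0,P}(\bx)\}.
\]
For the shear polarizations I would use the elementary identity $\be_\theta\otimes\be_\theta+\sum_l\be_\theta^{\perp,l}\otimes\be_\theta^{\perp,l}=\ID$ (valid in both dimensions, since $\be_\theta$ together with its perpendicular direction(s) forms an orthonormal basis) to write $\sum_l\be_{\theta_j}^{\perp,l}\otimes\be_{\theta_j}^{\perp,l}=\ID-\be_{\theta_j}\otimes\be_{\theta_j}$, apply \eqref{planesum} to the $\ID$‑part and the preceding identity (with $\K_S$ in place of $\K_P$) to the $\be_{\theta_j}\otimes\be_{\theta_j}$‑part, and combine using $\bGam_{0,S}=(\mu_0\K_S^2)^{-1}(\K_S^2\ID+\DD_\bx)G_S^\omega$ from \eqref{211}, obtaining
\[
\frac1n\sum_{j=1}^n e^{i\K_S\bx\cdot\be_{\theta_j}}\sum_l\be_{\theta_j}^{\perp,l}\otimes\be_{\theta_j}^{\perp,l}\simeq -4\mu_0\Big(\frac{\pi}{\K_S}\Big)^{d-2}\Im m\{\bGam_{0,S}(\bx)\}.
\]

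To finish, evaluate these two identities at $\bx=\bz^S-\bz_a$, substitute into the averaged formulas of the first paragraph, and contract with $\BM$. Since $\Im m\{\bGam_{0,P}\}:\Im m\{\bGam_{0,P}\}=|\Im m\{\bGam_{0,P}\}|^2$, $\Im m\{\bGam_{0,S}\}:\Im m\{\bGam_{0,S}\}=|\Im m\{\bGam_{0,S}\}|^2$, and all scalars produced are real Frobenius products of imaginary parts of Green tensors, the operator $\Re e$ is inert, and the two displayed expansions in the statement follow with the constant $C$ of \eqref{C-const}. The main obstacle is the rigorous justification that \eqref{planesum}—itself a large‑$n$ asymptotic identity—may be differentiated term by term in $\bx$ to produce the tensorial identities above; the clean way around this is to re‑run the plane‑wave superposition argument underlying \eqref{planesum} with the polarization factors $\be_{\theta_j}\otimes\be_{\theta_j}$ carried along from the start. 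A secondary, purely bookkeeping point is keeping the dimension‑dependent prefactors $(\pi/\K_\alpha)^{d-2}$ and the two cross‑speed coupling terms $\tfrac1{c_S}\Im m\{\bGam_{0,P}\}:\Im m\{\bGam_{0,S}\}$ and $\tfrac1{c_P}\Im m\{\bGam_{0,P}\}:\Im m\{\bGam_{0,S}\}$ consistently matched between the $P$‑ and $S$‑cases.
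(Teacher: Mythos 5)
Your proposal is correct and follows essentially the same route as the paper's proof: reduce $\I_{\rm TD}$ via the Helmholtz--Kirchhoff identities, rewrite $\be_{\theta_j}\cdot\mathbf{A}\be_{\theta_j}$ as $\be_{\theta_j}\otimes\be_{\theta_j}:\mathbf{A}$, and upgrade \eqref{planesum} to the two tensorial polarization averages (the paper's \eqref{approx-e-times-e} and \eqref{approx-eP-times-eP}, the latter obtained from $\sum_l\be_{\theta_j}^{\perp,l}\otimes\be_{\theta_j}^{\perp,l}=\ID-\be_{\theta_j}\otimes\be_{\theta_j}$ exactly as you do) before contracting with the fixed real symmetric matrix $\BM$. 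The paper differentiates \eqref{planesum} term by term without further comment, so your closing caveat about justifying that step is a refinement rather than a divergence.
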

\begin{proof}
From (\ref{planesum}) it follows that
\begin{eqnarray}
\dfrac{1}{n} \sum_{j=1}^{n} e^{i \K_P \bx \cdot \be_{\theta_j}}
\be_{\theta_j} \otimes \be_{\theta_j} &\simeq&  4
(\frac{\pi}{\K_P})^{d-2} \Im m \left\{ \dfrac{1}{\K_P^2}
 \DD_\bx G^\omega_P(\bx) \right\} \nonumber
\\
&\simeq& - 4 \mu_0  (\frac{\pi}{\K_P})^{d-2} (\frac{\K_S}{\K_P})^2
\Im m  \left\{ \bGam_{0,P}(\bx)  \right\},
\label{approx-e-times-e}
\end{eqnarray}
and
\begin{eqnarray}
\dfrac{1}{n} \sum_{j=1}^{n} e^{i \K_S \bx \cdot\be_{\theta_j}}
\be_{\theta_j}^{\perp} \otimes\be_{\theta_j}^{\perp} &=&
\dfrac{1}{n} \sum_{j=1}^{n} e^{i \K_S \bx \cdot\be_{\theta_j}}
\left(\ID - \be_{\theta_j} \otimes\be_{\theta_j} \right) \nonumber
\\\nm
&\simeq& - 4 (\frac{\pi}{\K_S})^{d-2} \Im m \left\{ \left( \ID +
\dfrac{1}{\K_S^2} \DD_\bx \right) G^\omega_S(\bx) \right\}
\nonumber
\\\nm
&=&  - 4 \mu_0  (\frac{\pi}{\K_S})^{d-2} \Im m  \left\{
\bGam_{0,S}(\bx)  \right\}, \label{approx-eP-times-eP}
\end{eqnarray}
where the last equality comes from (\ref{211}). Note that, in
three dimensions, (\ref{approx-eP-times-eP}) is to be understood
as follows:
\begin{equation} \label{eq3D}
\dfrac{1}{n} \sum_{j=1}^{n} \sum_{l=1}^2 e^{i \K_S \bx
\cdot\be_{\theta_j}} \be_{\theta_j}^{\perp, l}
\otimes\be_{\theta_j}^{\perp, l} \simeq - 4 \mu_0
(\frac{\pi}{\K_S}) \Im m  \left\{ \bGam_{0,S}(\bx) \right\}.
\end{equation}
Then, using the definition of $\bU^P_j$ we compute imaging functional $\I_{\rm{TD}}$ for $n$ plane $P-$waves as
\begin{eqnarray*}
\ds \dfrac{1}{n} \sum_{j=1}^{n} \I_{\rm{TD}}[\bU_j^P](\bz^{S}) &=&
C \omega^4 \frac{1}{n} \sum_{j=1}^{n} \Re e \, \bU^{P}_j(\bz^S)
\cdot
 \left[ \int_{\partial\Omega}\overline{\bGam_0}(\bx-\bz_a)\bGam_0(\bx-\bz^S)d\sigma(\bx) \overline{\bU^{P}_j}(\bz_a) \right]
 \nonumber
\\\nm
&\simeq& - C  \omega^3  \dfrac{1}{n} \sum_{j=1}^{n} \Re e \, e^{i
\K_P (\bz^{S} - \bz_a) \cdot \be_{\theta_j}}\be_{\theta_j} \cdot
\Bigg[ \Im m \Big\{ \frac{1}{c_P}\bGam_{0,P}(\bz^S - \bz_a)
\nonumber
\\
&&\qquad\qquad\qquad\qquad\qquad\qquad\qquad\qquad +\frac{1}{c_S}
\bGam_{0,S}(\bz^S - \bz_a) \Big\}  \be_{\theta_j} \Bigg] \nonumber
\\\nm
&\simeq& - C \omega^3 \Re e \, \Bigg[\dfrac{1}{n} \sum_{j=1}^{n}
    e^{i \K_P (\bz^{S} - \bz_a)\cdot\be_{\theta_j}} \be_{\theta_j}
    \otimes \be_{\theta_j} \Bigg] :
\nonumber
\\
&& \qquad\qquad\qquad \Bigg[ \Im m
\left\{\frac{1}{c_P}\bGam_{0,P}(\bz^S - \bz_a) +\frac{1}{c_S}
\bGam_{0,S}(\bz^S - \bz_a) \right\} \Bigg]. \nonumber
\end{eqnarray*}
Here we used the fact that $\be_{\theta_j} \cdot \mathbf{A}
\be_{\theta_j} = \be_{\theta_j} \otimes \be_{\theta_j} :
\mathbf{A}$ for a matrix $\mathbf{A}$, which is easy to check.

Finally, exploiting the approximation \eqref{approx-e-times-e}, we conclude that
\begin{eqnarray*}
\ds \dfrac{1}{n} \sum_{j=1}^{n} \I_{\rm{TD}}[\bU_j^P](\bz^{S})
&\simeq& 4 \mu_0 C \omega^3  (\frac{\pi}{\K_P})^{d-2}
(\frac{\K_S}{\K_P})^2 \Bigg[ \dfrac{1}{c_P}\left| \Im m \left\{
\bGam_{0,P}(\bz^S-\bz_a)\right\}\right|^2 \nonumber
\\
&& \qquad\qquad\qquad + \dfrac{1}{c_S} \Im m
\left\{\bGam_{0,P}(\bz^S-\bz_a)\right\}  : \Im m
\left\{\bGam_{0,S}(\bz^S-\bz_a) \right\} \Bigg].
\end{eqnarray*}

Similarly, we can compute the imaging functional $\I_{\rm{TD}}$ for $n$ plane $S-$waves exploiting the approximation \eqref{approx-eP-times-eP}, as
\begin{eqnarray*}
\frac{1}{n} \sum_{j=1}^{n} \I_{\rm{TD}}[\bU_j^S](\bz^{S})
 &=&
 C \omega^4  \frac{1}{n} \sum_{j=1}^{n} \Re e\,  \bU^{S}_j(\bz^S)
\cdot\left[
\int_{\partial\Omega}\overline{\bGam_0}(\bx-\bz_a)\bGam_0(\bx-\bz^S)d\sigma(\bx)
\overline{\bU^{S}_j}(\bz_a) \right] \nonumber
\\
\nm
 &\simeq&
- C  \omega^3  \dfrac{1}{n} \sum_{j=1}^{n} \Re e\, e^{i \K_S
(\bz^{S} - \bz_a)\cdot\be_{\theta_j}}   \be_{\theta_j}^{\perp}
\cdot
 \Bigg[
 \Im m  \Big\{\dfrac{1}{c_P}\bGam_{0,P}(\bz^S - \bz_a)
 \nonumber
 \\
 &&
 \qquad\qquad\qquad\qquad\qquad\qquad\qquad\qquad
 + \dfrac{1}{c_S}\bGam_{0,S}(\bz^S - \bz_a) \Big\}\be_{\theta_j}^{\perp}
 \Bigg]
\nonumber
\end{eqnarray*}
\begin{eqnarray*}
 &\simeq& - C \omega^3  \Re e\, \Bigg[ \dfrac{1}{n} \sum_{j=1}^{n} e^{i
\K_S (\bz^{S} -\bz_a)\cdot\be_{\theta_j}} \be_{\theta}^{\perp}
\otimes\be_{\theta}^{\perp}\Bigg]: \nonumber
\\
&&
\qquad\qquad\qquad
 \Bigg[ \Im m \Big\{\dfrac{1}{c_P}\bGam_{0,P}(\bz^S - \bz_a)
 + \dfrac{1}{c_S}  \bGam_{0,S}(\bz^S - \bz_a) \Big\} \Bigg]
 \nonumber
\\
\nm
& \simeq &
 4 \mu_0 C \omega^3 (\frac{\pi}{\K_S})^{d-2}
\Bigg[ \dfrac{1}{c_S} \left|\Im m \left\{ \bGam_{0,S}(\bz^S-\bz_a)
\right\} \right|^2 \nonumber
\\
&& \qquad\qquad\qquad + \dfrac{1}{c_P} \Im m
\left\{\bGam_{0,P}(\bz^S-\bz_a)\right\}: \Im m
\left\{\bGam_{0,S}(\bz^S-\bz_a)\right\} \Bigg].
\end{eqnarray*}
In dimension 3, one should use (\ref{eq3D}) to get the desired
result. This completes the proof.
\end{proof}
From Proposition \ref{prop-TD-caseI}, it is not clear that the
imaging functional $\I_{\rm{TD}}$ attains its maximum at $\bz_a$.
Moreover,  for both $\ds\frac{1}{n} \sum_{j=1}^{n}
{\I}_{\rm{TD}}[\bU_j^S](\bz^{S})$ and $\ds\frac{1}{n}
\sum_{j=1}^{n} {\I}_{\rm{TD}}[\bU_j^P](\bz^{S})$ the resolution at
$\bz_a$ is not fine enough due to the presence of the term $ \Im m
\left\{\bGam_{0,P}(\bz^S-\bz_a)\right\}: \Im m
\left\{\bGam_{0,S}(\bz^S-\bz_a) \right\} . $ One way to cancel out
this term is to combine $\ds\frac{1}{n} \sum_{j=1}^{n}
{\I}_{\rm{TD}}[\bU_j^S](\bz^{S})$ and $\ds\frac{1}{n}
\sum_{j=1}^{n} {\I}_{\rm{TD}}[\bU_j^P](\bz^{S})$ as follows:
\begin{equation*}
\ds\frac{1}{n} \sum_{j=1}^{n} \bigg( c_S (\frac{\K_P}{\pi})^{d-2}
(\frac{\K_P}{\K_S})^2 {\I}_{\rm{TD}}[\bU_j^P](\bz^S) - c_P
(\frac{\K_S}{\pi})^{d-2} {\I}_{\rm{TD}}[\bU_j^S](\bz^S) \bigg).
\end{equation*}
However, one arrives at
\begin{eqnarray*}
&& \ds\frac{1}{n} \sum_{j=1}^{n} \bigg( c_S
(\frac{\K_P}{\pi})^{d-2} (\frac{\K_P}{\K_S})^2
{\I}_{\rm{TD}}[\bU_j^P](\bz^S) - c_P (\frac{\K_S}{\pi})^{d-2}
{\I}_{\rm{TD}}[\bU_j^S](\bz^S) \bigg) \nonumber
\\
&&
\qquad\qquad
\simeq
 4 \mu_0 C \omega^3  \bigg( \frac{c_S}{c_P}
\left|\Im m \left\{ \bGam_{0,P}(\bz^S-\bz_a) \right\} \right|^2 -
\frac{c_P}{c_S} \left|  \Im m \left\{
\bGam_{0,S}(\bz^S-\bz_a)\right\} \right|^2 \bigg),
\end{eqnarray*}
which is not a sum of positive terms and then can not guarantee
that the maximum of the obtained imaging functional is at the
location of the inclusion.


\subsubsection{Case II: Elasticity contrast}\label{sec:TD:sensitivity:II}
Suppose $\rho_0 = \rho_1$. Further, we assume for simplicity that
$\MM = \MM'(B') = \MM(B)$. From Lemma \ref{lem-HKI} we have
\begin{equation} \label{expl2} \begin{array}{lll}
 \ds\int_{\partial\Omega} \nabla_{\bz_a} \overline{\bGam_{0}}(\bx-\bz_a) \nabla_{\bz^S} \bGam_{0} (\bx-\bz^S)d\sigma(\bx)
 &\simeq&\ds
 - \dfrac{1}{c_S \omega}\Im m \left\{ \nabla_{\bz_a} \nabla_{\bz^S} \bGam_{0,S}(\bz^S-\bz_a) \right\} \\
 \nm && \ds -
 \dfrac{1}{c_P \omega}\Im m \left\{ \nabla_{\bz_a} \nabla_{\bz^S} \bGam_{0,P}(\bz^S-\bz_a)
 \right\}. \end{array}
\end{equation}
Then, using (\ref{W-Exp}) and (\ref{expl2}),
$\I_{\rm{TD}}\left[\bU\right](\bz^S)$ at $\bz^S\in\Omega$ becomes
\begin{eqnarray}
\I_{\rm{TD}}\left[\bU\right](\bz^S) &=& - \delta^d\, \Re e\,
\nabla \bU(\bz^S) : \MM \nabla \bw(\bz^S) \nonumber
\\
\nm &=& \delta^d\, \Re e\,  \nabla \bU(\bz^S) : \MM \left[
\int_{\partial\Omega}\nabla_{\bz_a}\overline{\bGam_0}(\bx-\bz_a)
\nabla_{\bz^S}\bGam_0(\bx-\bz^S)d\sigma(\bx) : \MM
\overline{\nabla \bU} (\bz_a) \right] \nonumber
\\
&\simeq&  \frac{\delta^d}{\omega}\, \Re e\,  \nabla \bU(\bz^S) :
\MM \Bigg[ \nabla^2 \Big( \Im m \Big\{ \widetilde{\bGam_{0}}(\bz^S
- \bz_a)\Big\} \Big) : \MM \overline{\nabla \bU}(\bz_a) \Bigg],
 \end{eqnarray}
where
\begin{equation}
\widetilde{\bGam_{0}}(\bz^S - \bz_a) = \frac{1}{c_P}
\bGam_{0,P}(\bz^S - \bz_a)+ \frac{1}{c_S} \bGam_{0,S}(\bz^S -
\bz_a).
\end{equation}

Let us define
\begin{eqnarray}
\label{J} J_{\alpha,\beta}(\bz^S) := \Big(\MM\Im
m\left[\big(\nabla^2\bGam_{0,\alpha}\big)(\bz^S-\bz_a)\right]\Big)
: \Big(\MM\Im m\left[
\big(\nabla^2\bGam_{0,\beta}\big)(\bz^S-\bz_a)\right]\Big)^T,
\end{eqnarray}
where $ \mathbb{A}^{T} = ({A}_{klij})$ if $\mathbb{A}$ is the
4-tensor given by $\mathbb{A} =({A}_{ijkl})$. Here $\mathbb{A} :
\mathbb{B} = \sum_{ijkl}A_{ijkl} B_{ijkl}$ for any $4$-tensors
$\mathbb{A}= ({A}_{ijkl})$ and $\mathbb{B}= ({B}_{ijkl})$.

The following result holds.
\begin{prop}
\label{prop-TD-caseII} Let $\bU^\alpha_j$ be defined in
\eqref{plane-waves}, where $j=1,2,\cdots, n$, for $n$ sufficiently
large. Let $J_{\alpha,\beta}$ be defined by \eqref{J}. Then, for
all $\bz^S\in\Omega$ far from $\partial\Omega$,
\begin{equation}
\ds \dfrac{1}{n} \sum_{j=1}^{n} \I_{\rm{TD}}[\bU_j^P](\bz^S)
\simeq
 4\delta^d \dfrac{\mu_0}{\omega }  (\frac{\pi}{\K_P})^{d-2} (\frac{\K_S}{\K_P})^2 \Big(\dfrac{1}{c_P}J_{P,P}(\bz^S)+\dfrac{1}{c_S}J_{S,P}(\bz^S)\Big)
\end{equation}
and
\begin{equation}
\ds \dfrac{1}{n} \sum_{j=1}^{n} \I_{\rm{TD}}[\bU_j^S](\bz^S)
\simeq
 4 \delta^d\dfrac{\mu_0}{\omega } (\frac{\pi}{\K_S})^{d-2} \Big(\dfrac{1}{c_S}J_{S,S}(\bz^S)+\dfrac{1}{c_P}J_{S,P}(\bz^S)\Big).
\end{equation}
\end{prop}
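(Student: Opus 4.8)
\medskip
\noindent\textbf{Proof proposal.}
The plan is to follow the proof of Proposition~\ref{prop-TD-caseI} in spirit, the only structural change being that the rank-one dyads $\be_{\theta_j}\otimes\be_{\theta_j}$ occurring there are replaced by the rank-two matrices carried by $\nabla\bU_j^\alpha$, and $\bGam_{0,\alpha}$ by its Hessian $\nabla^2\bGam_{0,\alpha}$. I would start from the approximation established just above the statement,
\[
\I_{\rm{TD}}[\bU](\bz^S)\ \simeq\ \frac{\delta^d}{\omega}\,\Re e\,\nabla\bU(\bz^S):\MM\Big[\nabla^2\!\big(\Im m\{\widetilde{\bGam_{0}}(\bz^S-\bz_a)\}\big):\MM\,\overline{\nabla\bU}(\bz_a)\Big],\qquad \widetilde{\bGam_{0}}=\tfrac{1}{c_P}\bGam_{0,P}+\tfrac{1}{c_S}\bGam_{0,S},
\]
and insert the plane waves \eqref{plane-waves}. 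Since $\nabla\bU_j^P(\bx)=i\K_P e^{i\K_P\bx\cdot\be_{\theta_j}}\,\be_{\theta_j}\otimes\be_{\theta_j}$ and $\nabla\bU_j^S(\bx)=i\K_S e^{i\K_S\bx\cdot\be_{\theta_j}}\,\be_{\theta_j}\otimes\be_{\theta_j}^{\perp}$, the self-adjointness of $\MM$ on symmetric matrices turns each summand of $\frac1n\sum_j\I_{\rm{TD}}[\bU_j^P](\bz^S)$ into $\K_P^2\,e^{i\K_P(\bz^S-\bz_a)\cdot\be_{\theta_j}}$ multiplied by $[(\be_{\theta_j}\otimes\be_{\theta_j})\otimes(\be_{\theta_j}\otimes\be_{\theta_j})]:(\MM\,\mathbb{T}\,\MM)$, where $\mathbb{T}:=\Im m\{(\nabla^2\widetilde{\bGam_{0}})(\bz^S-\bz_a)\}$ and $\MM\,\mathbb{T}\,\MM$ is the $4$-tensor obtained by composing the two copies of $\MM$ with $\mathbb{T}$ (and similarly for $\bU_j^S$, with $\be_{\theta_j}\otimes\be_{\theta_j}^{\perp}$ in place of $\be_{\theta_j}\otimes\be_{\theta_j}$). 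Thus everything reduces to computing the rank-four averages $\frac1n\sum_j e^{i\K_P(\bz^S-\bz_a)\cdot\be_{\theta_j}}(\be_{\theta_j}\otimes\be_{\theta_j})\otimes(\be_{\theta_j}\otimes\be_{\theta_j})$ and its $S$-analogue.

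These rank-four averages I would obtain by differentiating the second-order identities \eqref{approx-e-times-e} and \eqref{approx-eP-times-eP} once more in $\bz^S$ and/or $\bz_a$, compensating with the suitable power of $i\K_\alpha$. Using $\DD_\bx G_P^\omega=-\mu_0\K_S^2\bGam_{0,P}$ from \eqref{211} this yields
\[
\frac1n\sum_{j=1}^n e^{i\K_P(\bz^S-\bz_a)\cdot\be_{\theta_j}}(\be_{\theta_j}\otimes\be_{\theta_j})\otimes(\be_{\theta_j}\otimes\be_{\theta_j})\ \simeq\ \frac{4\mu_0}{\K_P^2}\Big(\frac{\pi}{\K_P}\Big)^{d-2}\Big(\frac{\K_S}{\K_P}\Big)^2\Im m\big\{(\nabla^2\bGam_{0,P})(\bz^S-\bz_a)\big\},
\]
and, using $\bGam_{0,S}=\mu_0^{-1}\K_S^{-2}(\K_S^2\ID+\DD_\bx)G_S^\omega$ from \eqref{211},
\[
\frac1n\sum_{j=1}^n e^{i\K_S(\bz^S-\bz_a)\cdot\be_{\theta_j}}(\be_{\theta_j}\otimes\be_{\theta_j}^{\perp})\otimes(\be_{\theta_j}\otimes\be_{\theta_j}^{\perp})\ \simeq\ \frac{4\mu_0}{\K_S^2}\Big(\frac{\pi}{\K_S}\Big)^{d-2}\Im m\big\{(\nabla^2\bGam_{0,S})(\bz^S-\bz_a)\big\},
\]
after reshuffling the two polarization slots so as to match the derivative and matrix slots of $\nabla^2\bGam_{0,S}$ (in $d=3$ the latter average is read off from \eqref{eq3D}). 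Substituting these into the reduction of the first paragraph, the factor $\K_\alpha^2$ brought down by $\nabla\bU_j^\alpha$ cancels one factor $\K_\alpha^{-2}$, so the overall constants become exactly $4\mu_0\delta^d\omega^{-1}(\pi/\K_P)^{d-2}(\K_S/\K_P)^2$ and $4\mu_0\delta^d\omega^{-1}(\pi/\K_S)^{d-2}$; and since $\mathbb{T}$ and the averaged tensors are imaginary parts (hence real) and $\MM$ is real, the outer $\Re e$ is automatically satisfied.

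It then remains to identify the resulting full contraction with the functionals \eqref{J}. Splitting $\widetilde{\bGam_{0}}=c_P^{-1}\bGam_{0,P}+c_S^{-1}\bGam_{0,S}$ inside $\mathbb{T}$ produces two pieces in each sum; one then recognizes, using the EMT symmetries $m_{ijpq}=m_{jipq}=m_{ijqp}=m_{pqij}$, the structural symmetries of $\bGam_{0,P}$ and $\bGam_{0,S}$ coming from \eqref{211} (in particular the total symmetry $\partial_i\partial_j(\bGam_{0,P})_{kl}=\partial_k\partial_l(\bGam_{0,P})_{ij}$), and the symmetry $J_{\alpha,\beta}=J_{\beta,\alpha}$ of \eqref{J}, that these pieces are precisely $c_P^{-1}J_{P,P}+c_S^{-1}J_{S,P}$ for $\frac1n\sum_j\I_{\rm{TD}}[\bU_j^P](\bz^S)$ and $c_S^{-1}J_{S,S}+c_P^{-1}J_{S,P}$ for $\frac1n\sum_j\I_{\rm{TD}}[\bU_j^S](\bz^S)$, which is the asserted pair of identities.

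I expect the index bookkeeping of this last step to be the main obstacle: the contraction produced by the computation pairs the tensor indices in a different way than the ``transposed'' pairing built into the definition \eqref{J} of $J_{\alpha,\beta}$ (where the EMT carries the derivative indices of one Green tensor onto the matrix indices of the other), and reconciling the two is exactly what uses up the symmetry properties of $\MM$ and of the $\bGam_{0,\alpha}$'s. This is also the reason each $P$-wave sum involves only $J_{P,P}$ and $J_{S,P}$ while each $S$-wave sum involves only $J_{S,S}$ and $J_{S,P}$. A subsidiary point is the handling of the transverse polarization $\be_{\theta_j}^{\perp}$ in the $S$-wave averages, done via $\be_{\theta_j}^{\perp}\otimes\be_{\theta_j}^{\perp}=\ID-\be_{\theta_j}\otimes\be_{\theta_j}$ when $d=2$ and via \eqref{eq3D} when $d=3$.
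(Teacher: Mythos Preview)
Your proposal is correct and follows essentially the same approach as the paper's own proof: you start from the approximation for $\I_{\rm{TD}}[\bU]$ involving $\nabla^2\widetilde{\bGam_0}$, insert the plane-wave gradients, obtain the rank-four averages by taking the Hessian of \eqref{approx-e-times-e} and \eqref{approx-eP-times-eP} (which the paper records as \eqref{approx-A-Hessian} and \eqref{approx-eP-times-eP-Hessian}), and then identify the resulting contractions with $J_{\alpha,\beta}$ via the EMT and Green-tensor symmetries. The paper carries this out by writing everything in explicit index form rather than invoking ``self-adjointness of $\MM$'' abstractly, which sidesteps the minor wrinkle that $\be_{\theta_j}\otimes\be_{\theta_j}^{\perp}$ is not symmetric; but since $m_{ijpq}=m_{jipq}$ this makes no difference, and your anticipated ``index bookkeeping'' is exactly what the paper does line by line.
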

\begin{proof}
Let us compute $\I_{\rm{TD}}$ for $n$ plane $P-$waves, \emph{i.e.}
\begin{eqnarray}
\frac{1}{n} \sum_{j=1}^{n} \I_{\rm{TD}}[\bU_j^P](\bz^S) &=& \ds
\dfrac{\delta^d}{\omega}\frac{1}{n} \Re e\, \sum_{j=1}^{n}\nabla
\bU^{P}_j(\bz^S) :\MM \left[ \Im
m\left\{\big(\nabla^2\widetilde{\bGam_{0}}\big)(\bz^S - \bz_a)
\right\} : \MM \overline{\nabla \bU^{P}_j}(\bz_a) \right]
\nonumber
\\
\nm &\simeq&  \delta^d\frac{\omega}{c_P^2}\frac{1}{n} \Re e\,
\sum_{j=1}^{n} e^{i \K_P (\bz^S -
\bz_a)\cdot\be_{\theta_j}}\be_{\theta_j} \otimes \be_{\theta_j} :
\nonumber
\\
&& \qquad\qquad\qquad\qquad \MM \left( \Im m \left\{\nabla^2
\widetilde{\bGam_{0}}(\bz^S - \bz_a) \right\} : \MM\be_{\theta_j}
\otimes\be_{\theta_j}\right).
\end{eqnarray}
Equivalently,
\begin{eqnarray}
\frac{1}{n} \sum_{j=1}^{n} \I_{\rm{TD}}[\bU_j^P](\bz^S) &=&
\ds\delta^d\frac{\omega}{c_P^2}\frac{1}{n} \Re e\, \sum_{j=1}^{n}
e^{i\K_P (\bz^S - \bz_a)\cdot\be_{\theta_j}} \sum_{i,k,l,m=1}^d
\sum_{i',k',l',m'=1}^d {A}^{\theta_j}_{ik}\, m_{lmik} \nonumber
\\
&& \qquad\qquad \times\Im m  \left\{ \left(
\big(\partial_{li'}^2\widetilde{\bGam_{0}}\big)(\bz^S - \bz_a)
\right)_{mk'} \right\}m_{l'm'i'k'}\, {A}^{\theta_j}_{l'm'}
\end{eqnarray}
where the matrix  $\mathbf{A}^{\theta_j}
=({A}^{\theta_j}_{ik})_{ik}$ is defined as $ \mathbf{A}^{\theta_j}
:= \be_{\theta_j} \otimes \be_{\theta_j}. $ It follows that
\begin{eqnarray}
\frac{1}{n} \sum_{j=1}^{n} \I_{\rm{TD}}[\bU_j^P](\bz^S) &=& \ds
\delta^d  \Re e\, \sum_{i,k,l,m=1}^d \sum_{i',k',l',m'=1}^d
m_{lmik}\,m_{l'm'i'k'} \Im m  \left[ \left(
\big(\partial_{li'}^2\widetilde{\bGam_{0}}\big)(\bz^S - \bz_a)
\right)_{mk'} \right] \nonumber
\\
&& \qquad \qquad \ds \times\Bigg(\frac{\omega}{c_P^2} \frac{1}{n}
\sum_{j=1}^{n} e^{i \K_P (\bz^S - \bz_a)\cdot\be_{\theta_j}}
A^{\theta_j}_{ik} {A}^{\theta_j}_{l'm'}\Bigg).
\end{eqnarray}
Recall that for $n$ sufficiently large, we have from \eqref{approx-e-times-e}
\begin{equation*}
\ds\frac{1}{n} \sum_{j=1}^{n}  e^{i \K_P \bx \cdot \be_{\theta_j}}
\be_{\theta_j} \otimes \be_{\theta_j} \simeq - 4 \mu_0
(\frac{\pi}{\K_P})^{d-2} (\frac{\K_S}{\K_P})^2 \Im m \left\{
\bGam_{0,P}(\bx)  \right\}
\end{equation*}
(with the version (\ref{eq3D}) in dimension 3).
 Taking the Hessian of the previous approximation leads to
\begin{eqnarray}
\label{approx-A-Hessian} \ds\frac{1}{n} \sum_{j=1}^{n} e^{i \K_P
\bx\cdot\be_{\theta_j}} \be_{\theta_j} \otimes \be_{\theta_j}
\otimes \be_{\theta_j} \otimes \be_{\theta_j} &\simeq&  4 \mu_0
\frac{c_P^2}{\omega^2} (\frac{\pi}{\K_P})^{d-2}
(\frac{\K_S}{\K_P})^2 ~\Im m \left\{ \nabla^2 \bGam_{0,P}(\bx)
\right\} \nonumber\\
\nm &\simeq&  4 \mu_0 \frac{c_P^4}{\omega^2 c_S^2}
(\frac{\pi}{\K_P})^{d-2}~\Im m \left\{ \nabla^2 \bGam_{0,P}(\bx)
\right\}.
\end{eqnarray}
Then, by virtue of \eqref{approx-e-times-e} and \eqref{approx-A-Hessian}, we obtain
\begin{eqnarray}
\frac{1}{n} \sum_{j=1}^{n} \I_{\rm{TD}}[\bU_j^P](\bz^S) &\simeq&
 \delta^d \frac{4 \mu_0}{\omega} (\frac{\pi}{\K_P})^{d-2} (\frac{\K_S}{\K_P})^2 \sum_{i,k,l,m=1}^d \sum_{i',k',l',m'=1}^d
  m_{lmik}\,m_{l'm'i'k'}
\nonumber
\\
&& \qquad\qquad\qquad \ds\times \Im m  \left\{ \left(
\big(\partial_{li'}^2\widetilde{\bGam_{0}}\big)
(\bz^S-\bz_a)\right)_{mk'} \right\} \Im m \left\{
 \left( \big(\partial_{l'i}^2 \bGam_{0,P}\big) (\bz^S-\bz_a)\right)_{m'k} \right\} \nonumber
\\
\nm &\simeq& \ds \delta^d \frac{4 \mu_0}{\omega}
(\frac{\pi}{\K_P})^{d-2} (\frac{\K_S}{\K_P})^2
\sum_{i,k,i',k'=1}^d \Bigg( \sum_{l,m=1}^d m_{lmik} \Im m \left\{
\left( \big(\partial_{li'}^2 \widetilde{\bGam_{0}}\big)
(\bz^S-\bz_a)\right)_{mk'} \right\}\Bigg) \nonumber
\\
&& \qquad\qquad\qquad \times\ds\Bigg( \sum_{l',m'=1}^d
m_{l'm'i'k'} \Im m  \left\{ \left( \big(\partial_{l'i}^2
\bGam_{0,P}\big)(\bz^S-\bz_a)\right)_{m'k} \right\} \Bigg).
\nonumber
\end{eqnarray}
Therefore, by the definition \eqref{J} of $J_{\alpha,\beta}$, we conclude that
\begin{eqnarray*}
\frac{1}{n} \sum_{j=1}^{n} \I_{\rm{TD}}[\bU_j^P](\bz^S) &\simeq&
\ds \delta^d  \frac{4 \mu_0}{\omega} (\frac{\pi}{\K_P})^{d-2}
(\frac{\K_S}{\K_P})^2 \left( \MM\Im m\left\{ \nabla^2
\widetilde{\bGam_{0}}(\bz^S - \bz_a) \right\} \right)
\\
&& \qquad\qquad\qquad \ds : \Big( \MM\Im m\left\{  \nabla^2
\bGam_{0,P}(\bz^S - \bz_a)  \right\} \Big)^T
\\
&\simeq& \ds \delta^d  \frac{4 \mu_0}{\omega}
(\frac{\pi}{\K_P})^{d-2} (\frac{\K_S}{\K_P})^2
\left(\dfrac{1}{c_P}J_{P,P}(\bz^S)+\dfrac{1}{c_S}J_{S,P}(\bz^S)\right).
\end{eqnarray*}
Similarly, consider the case of plane $S-$waves and compute $\I_{\rm{TD}}$ for $n$ directions. We have
\begin{eqnarray}
\frac{1}{n}\sum_{j=1}^{n} \I_{\rm{TD}}[\bU_j^S](\bz^S) &=&
\dfrac{\delta^d}{\omega}\frac{1}{n} \Re e\, \sum_{j=1}^{n}\nabla
\bU^{S}_j(\bz^S):\MM \left(\Im m\left\{\big(\nabla^2
\widetilde{\bGam_{0}}\big)(\bz^S - \bz_a)\right\}
:\MM\overline{\nabla \bU^{S}_j}(\bz_a) \right) \nonumber
\\
\nm &\simeq& \delta^d\frac{\omega}{c_S^2}\frac{1}{n} \Re e\,
\sum_{j=1}^{n} e^{i \K_S (\bz^S - \bz_a)\cdot\be_{\theta_j}}
\be_{\theta_j}^{\perp} \otimes \be_{\theta_j} : \MM \nonumber
\\
&& \qquad\qquad\qquad\qquad \left( \Im m  \left\{ \big( \nabla^2
\widetilde{\bGam_{0}}\big)(\bz^S - \bz_a) \right\}
:\MM\,\be_{\theta_j}^{\perp} \otimes\be_{\theta_j} \right)
\nonumber
\\
\nm &\simeq& \delta^d\frac{\omega}{c_S^2}\frac{1}{n} \Re e\,
\sum_{j=1}^{n} e^{i \K_S (\bz^S - \bz_a)\cdot\be_{\theta_j}}
\sum_{i,k,l,m=1}^d\, \sum_{i',k',l',m'=1}^d {B}^{\theta_j}_{ik}\,
m_{lmik} \nonumber
\\
&& \qquad\qquad\qquad \times\Im m \left\{ \left(
\big(\partial_{li'}^2\widetilde{\bGam_{0}} \big)(\bz^S - \bz_a)
\right)_{mk'} \right\} m_{l'm'i'k'} {B}^{\theta_j}_{l'm'}
\end{eqnarray}
where the matrix  $\mathbf{B}^{\theta_j}
=({B}^{\theta_j}_{ik})_{ik}$ is defined as $\mathbf{B}^{\theta_j}
= \be_{\theta_j} \otimes \be_{\theta_j}^{\perp}$. It follows that
\begin{eqnarray}
\frac{1}{n} \sum_{j=1}^{n} \I_{\rm{TD}}[\bU_j^S](\bz^S) &=&
\delta^d \sum_{i,k,l,m=1}^d\,\sum_{i',k',l',m'=1}^d
m_{lmik}\,m_{l'm'i'k'} \Im m \left[\partial_{li'}^2
\left(\widetilde{\bGam_{0}}(\bz^S - \bz_a) \right)_{mk'} \right]
\nonumber
\\
\nm && \qquad\qquad\qquad\qquad \left(\frac{\omega}{c_S^2}
\frac{1}{n} \sum_{j=1}^{n}  e^{i \K_S (\bz^S -
\bz_a)\cdot\be_{\theta_j}} {B}^{\theta_j}_{ik}
{B}^{\theta_j}_{l'm'}\right).
\end{eqnarray}

Now, recall from \eqref{approx-eP-times-eP} that for $n$ sufficiently large, we have
$$
\frac{1}{n} \sum_{j=1}^{n} e^{i \K_S \bx \cdot\be_{\theta_j}}
\be_{\theta_j}^{\perp} \otimes\be_{\theta_j}^{\perp}\simeq - 4
\mu_0 (\frac{\pi}{\K_S})^{d-2} \Im m  \left\{  \bGam_{0,S}(\bx)
\right\} .
$$
Taking the Hessian of this approximation leads to
\begin{eqnarray}
\frac{1}{n} \sum_{j=1}^{n} e^{i \K_S \bx\cdot\be_{\theta_j}}
\be_{\theta_j} \otimes \be_{\theta_j}^{\perp} \otimes
\be_{\theta_j} \otimes \be_{\theta_j}^{\perp} \simeq  4\mu_0
\frac{c_S^2}{\omega^2} (\frac{\pi}{\K_S})^{d-2} \Im m \left\{
\nabla^2 \bGam_{0,S}(\bx) \right\},
\label{approx-eP-times-eP-Hessian}
\end{eqnarray}
where we have made use of the convention
$$
\left(\nabla^2 \bGam_{0,S}\right)_{ijkl} = \partial_{ik} \left(\bGam_{0,S}\right)_{jl}.
$$
Then, by using \eqref{approx-eP-times-eP}, \eqref{approx-eP-times-eP-Hessian} and the similar arguments as
in the case of $P-$waves, we arrive at
\begin{eqnarray*}
\frac{1}{n} \sum_{j=1}^{n} \I_{\rm{TD}}[\bU_j^S](\bz^S) &\simeq&
\delta^d \frac{4\mu_0}{\omega} (\frac{\pi}{\K_S})^{d-2}
\sum_{i,k,l,m=1}^d\, \sum_{i',k',l',m'=1}^d m_{lmik}\,m_{l'm'i'k'}
\nonumber \\ \nm && \quad \times \Im m  \left\{ \left(
\big(\partial_{li'}^2\widetilde{\bGam_{0}}\big)\right)_{mk'}(\bz^S
- \bz_a)\right\} \nonumber
\\
&& \qquad\qquad\qquad\qquad\qquad\qquad\qquad \times \Im m \left\{
 \left(\big(\partial_{l'i}^2 \bGam_{0,S}\big)\right)_{m'k}(\bz^S - \bz_a) \right\}
\nonumber
\\
\nm &\simeq& \delta^d \frac{4 \mu_0}{\omega}
(\frac{\pi}{\K_S})^{d-2} \left( \MM\,\Im m\left\{ \big(\nabla^2
\widetilde{\bGam_{0}}\big)(\bz^S - \bz_a)\right\} \right):
\nonumber
\\
&& \qquad\qquad\qquad\qquad \qquad\qquad\qquad \Big( \MM\,\Im
m\left\{ \big(\nabla^2  \bGam_{0,S}\big)(\bz^S - \bz_a) \right\}
\Big)^T \nonumber
\\
\nm &\simeq& \delta^d \frac{4 \mu_0}{\omega}
(\frac{\pi}{\K_S})^{d-2} \Big(\dfrac{1}{c_P}
J_{P,S}(\bz^S)+\dfrac{1}{c_S} J_{S,S}(\bz^S)\Big).
\end{eqnarray*}
This completes the proof.
\end{proof}
As observed in Section \ref{sec:TD:sensitivity:I}, Proposition
\ref{prop-TD-caseII} shows that the resolution of $\I_{\rm{TD}}$
deteriorates due to the presence of the coupling term
\begin{eqnarray}
J_{P,S}(\bz^S)= \Big( \MM\Im m\left\{
\big(\nabla^2\bGam_{0,S}\big) (\bz^S - \bz_a)  \right\} \Big) :
\Big( \MM\Im m\left\{  \big(\nabla^2 \bGam_{0,P}\big) (\bz^S -
\bz_a) \right\} \Big)^T.
\end{eqnarray}

\subsubsection{Summary}\label{sec:TD:sensitivity:sum}
To conclude, we summarize the results of this section below.
\begin{itemize}
\item[-] Propositions \ref{prop-TD-caseI} and \ref{prop-TD-caseII}
indicate that the imaging function $\I_{\rm{TD}}$ may not attain
its maximum at the true location, $\bz_a$, of the inclusion $D$.

\item[-] In both cases, the resolution of the localization of
elastic anomaly $D$ degenerates due to the presence of the
coupling terms $\Im m \left\{\bGam_{0,P}(\bz^S-\bz_a)\right\}  :
\Im m \left\{\bGam_{0,S}(\bz^S-\bz_a) \right\}$ and
$J_{P,S}(\bz^S)$, respectively. \item[-] In order to enhance
imaging resolution to its optimum and insure that the imaging
functional attains its maximum only at the location of the
inclusion, one must eradicate the coupling terms.
\end{itemize}

\section{Modified imaging framework}\label{sec:WTD}
In this section, in order to achieve a better localization and
resolution properties, we introduce a modified imaging framework
based on a weighted Helmholtz decomposition of the TD imaging
functional. We will show that the modified framework leads to both
a better localization (in the sense that the modified imaging
functional attains its maximum at the location of the inclusion)
and a better resolution than the classical TD based sensitivity
framework. It is worthwhile mentioning that the classical
framework performs quite well for the case of Helmholtz equation
\cite{AGJK-Top} and the resolution and localization deteriorations
are purely dependent on the elastic nature of the problem, that
is, due to the coupling of pressure and shear waves propagating
with different wave speeds and polarization directions.

It should be noted that in the case of a density contrast only,
the modified imaging functional is still a topological derivative
based one, {\it i.e.}, obtained as the topological derivative of a
discrepancy functional. This holds because of the nonconversion of
waves (from shear to compressional and vice versa) in the presence
of only a small inclusion with a contrast density. However, in the
presence of a small inclusion with different Lam\'e coefficients
with the background medium, there is a mode conversion; see, for
instance, \cite{conversion}. As a consequence, the modified
functional proposed here can not be written in such a case as the
topological derivative of a discrepancy functional. It is rather a
Kirchhoff-type imaging functional.

\subsection{Weighted imaging functional}\label{sec:WTD:functional}
Following \cite{TrElastic}, we introduce a weighted topological
derivative imaging functional ${\I}_{\rm{W}}$, and justify that it
provides a better localization of the inclusion $D$ than
$\I_{\rm{TD}}$. This new functional ${\I}_{\rm{W}}$ can be seen as
a correction based on a weighted Helmholtz decomposition of
$\I_{\rm{TD}}$. In fact, using the standard $L^2$-theory of the
Helmholtz decomposition (see, for instance, \cite{galdi}),  we
find that in the search domain the pressure and the shear
components of $\bw$, defined by \eqref{W-Def}, can be written as
\begin{equation}
\label{helmdecomps} \bw =  \nabla \times \psi_\bw + \nabla
\phi_\bw.
\end{equation}
We define respectively the Helmholtz decomposition operators
$\OH^P$ and $\OH^S$ by
\begin{equation}
\OH^P\left[ \bw \right] := \nabla \phi_{\bw }\quad \text{and}
\quad \OH^S\left[ \bw\right] := \nabla \times \psi_{\bw}.
\end{equation}
Actually, the decomposition $\bw = \OH^P\left[ \bw \right] +
\OH^S\left[ \bw \right]$ can be found by solving a Neumann problem
in the search domain \cite{galdi}. Then we multiply the components
of $\bw$ with $c_P$ and $c_S$, the background pressure and the
shear wave speeds respectively. Finally, we define
 ${\I}_{\rm{W}}$ by
\begin{eqnarray}
\label{tildeI} {\I}_{\rm{W}}\left[\bU\right] &=&\ds c_P \Re
e\left\{ - \nabla \OH^P[\bU] : \MM'(B')\nabla \OH^P[\bw]
+\omega^2\left(\dfrac{\rho'_1}{\rho_0}
-1\right)|B'|\OH^P[\bU]\cdot \OH^P[\bw]\right\} \nonumber
\\\nm
&+& c_S \Re e\left\{ - \nabla \OH^S[\bU] : \MM'(B')\nabla
\OH^S[\bw] +\omega^2\left(\dfrac{\rho'_1}{\rho_0}
-1\right)|B'|\OH^S[\bU]\cdot\OH^S[\bw]\right\}.
\end{eqnarray}
We  rigorously explain in the next section why  this new
functional should be better than imaging functional
$\I_{\rm{TD}}$.

\subsection{Sensitivity analysis of weighted imaging functional}
\label{sec:WTD:sensitivity} In this section, we explain why
imaging functional $\I_{\rm{W}}$ attains its maximum at the
location $\bz_a$ of the true inclusion with a better resolution
than $\I_{\rm{TD}}$.  In fact, as shown in the later part of this
section, $\I_{\rm{W}}$ behaves like the square of the imaginary
part of a pressure or a shear Green function
depending upon  the incident wave. Consequently, it provides a resolution of the order of half a wavelength. For simplicity, we once again consider special cases of only density contrast and only elasticity contrast.

\subsubsection{Case I: Density contrast}\label{sec:WTD:sensitivity:I}
Suppose $\lambda_0 = \lambda_1$ and $\mu_0 = \mu_1$. Recall that
in this case, the wave function $\bw$ is given by (\ref{39}). Note
that $\OH^\alpha[\bGam_0] = \bGam_{0,\alpha}, \alpha \in \{P,S\}.$
 Therefore, the imaging functional $\I_{\rm{W}}$ at
$\bz^S\in\Omega$ turns out to be
\begin{eqnarray}
 {\I}_{\rm{W}}\left[\bU\right](\bz^S)
 &=& C\, \omega^4
 \Re e\, \Bigg(
 c_P  \OH^P[\bU](\bz^S)\cdot
 \Big[ \Big(\int_{\partial\Omega} \overline{\bGam_0}(\bx-\bz_a)\bGam_{0,P}(\bx-\bz^S) d\sigma(\bx)
  \Big) \overline{\bU}(\bz_a)
 \Big]
 \nonumber
 \\
 &&
 + c_S  \OH^S[\bU](\bz^S)\cdot
 \Big[ \Big( \int_{\partial\Omega} \overline{\bGam_0}(\bx-\bz_a)\bGam_{0,S}(\bx-\bz^S) d\sigma(\bx)
  \Big)\overline{\bU}(\bz_a) \Big] \Bigg).
\end{eqnarray}

By using Lemma \ref{lem-HKI}, we can easily get
\begin{eqnarray}
{\I}_{\rm{W}}\left[\bU\right](\bz^S) &\simeq& \ds - C \,\omega^3
\Re e\, \Bigg( \OH^P[\bU](\bz^S)\cdot \Big[ \Im m
\left\{\bGam_{0,P}(\bz^S-\bz_a)\right\} \overline{\bU}(\bz_a)
\Big] \nonumber
\\
&& \qquad \qquad \quad +\OH^S[\bU](\bz^S)\cdot \Big[ \Im
m\left\{\bGam_{0,S}(\bz^S-\bz_a)\right\} \overline{\bU}
(\bz_a)\Big] \Bigg). \label{45}
\end{eqnarray}

Consider $n$ uniformly distributed directions
$(\be_{\theta_1},\be_{\theta_2},\ldots, \be_{\theta_n})$ on the
unit disk or sphere for $n$ sufficiently large. Then, the
following proposition holds.
\begin{prop}
\label{prop-WTD-caseI} Let $\bU^\alpha_j$ be defined in
\eqref{plane-waves}, where $j=1,2,\cdots, n$, for $n$ sufficiently
large. Then, for all $\bz^S\in\Omega$ far from $\partial\Omega$,
\begin{equation} \label{eqgf1}
\ds \dfrac{1}{n} \sum_{j=1}^{n} \I_{\rm{W}}[\bU_j^P](\bz^S) \simeq
 4 \mu_0 C \omega^3  (\frac{\pi}{\K_P})^{d-2} (\frac{\K_S}{\K_P})^2 \left|  \Im m
 \left\{ \bGam_{0,P}(\bz^S - \bz_a) \right\}
\right|^2,
\end{equation}
and
\begin{equation} \label{eqgf2}
\ds \dfrac{1}{n} \sum_{j=1}^{n} \I_{\rm{W}}[\bU_j^S](\bz^S) \simeq
4 \mu_0 C \omega^3 (\frac{\pi}{\K_S})^{d-2}  \left|  \Im m \left\{
\bGam_{0,S}(\bz^S - \bz_a) \right\} \right|^2,
\end{equation}
where $C$ is given by \eqref{C-const}.
\end{prop}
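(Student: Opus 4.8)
The plan is to start from the already-derived asymptotic form \eqref{45} of $\I_{\rm W}$ and to exploit the fact that a plane pressure wave is curl-free while a plane shear wave is divergence-free, so that the weighted Helmholtz decomposition operators $\OH^P,\OH^S$ act trivially on them and kill the two ``cross'' contributions in \eqref{45}. Concretely, $\bU_j^P(\bx)=e^{i\K_P\bx\cdot\be_{\theta_j}}\be_{\theta_j}=\nabla\big((i\K_P)^{-1}e^{i\K_P\bx\cdot\be_{\theta_j}}\big)$ is a gradient field, so $\nabla\times\bU_j^P=\mathbf{0}$, and by uniqueness of the $L^2$-Helmholtz decomposition this forces $\OH^P[\bU_j^P]=\bU_j^P$, $\OH^S[\bU_j^P]=\mathbf{0}$; symmetrically $\bU_j^S(\bx)=e^{i\K_S\bx\cdot\be_{\theta_j}}\be_{\theta_j}^\perp$ satisfies $\nabla\cdot\bU_j^S=0$ because $\be_{\theta_j}^\perp\perp\be_{\theta_j}$, whence $\OH^S[\bU_j^S]=\bU_j^S$, $\OH^P[\bU_j^S]=\mathbf{0}$. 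Thus, substituting $\bU=\bU_j^P$ in \eqref{45} leaves only the pressure term $-C\omega^3\Re e\,\bU_j^P(\bz^S)\cdot[\Im m\{\bGam_{0,P}(\bz^S-\bz_a)\}\overline{\bU_j^P}(\bz_a)]$, and substituting $\bU=\bU_j^S$ leaves only the analogous shear term.

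Next I would insert the explicit plane-wave values at $\bz^S$ and $\bz_a$, giving $\bU_j^P(\bz^S)\cdot[\Im m\{\bGam_{0,P}(\bz^S-\bz_a)\}\overline{\bU_j^P}(\bz_a)]=e^{i\K_P(\bz^S-\bz_a)\cdot\be_{\theta_j}}\,\be_{\theta_j}\cdot\Im m\{\bGam_{0,P}(\bz^S-\bz_a)\}\be_{\theta_j}$, and rewrite the quadratic form with the identity $\be_{\theta_j}\cdot\mathbf{A}\be_{\theta_j}=\be_{\theta_j}\otimes\be_{\theta_j}:\mathbf{A}$ already used in the proof of Proposition \ref{prop-TD-caseI}. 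Averaging over $j=1,\dots,n$ and invoking \eqref{approx-e-times-e} (respectively \eqref{approx-eP-times-eP}, with \eqref{eq3D} in dimension $3$ for the shear case) replaces $\frac1n\sum_j e^{i\K_P(\bz^S-\bz_a)\cdot\be_{\theta_j}}\be_{\theta_j}\otimes\be_{\theta_j}$ by $-4\mu_0(\pi/\K_P)^{d-2}(\K_S/\K_P)^2\Im m\{\bGam_{0,P}(\bz^S-\bz_a)\}$, and the minus sign cancels the leading minus sign in \eqref{45}. This reduces the averaged functional to $4\mu_0 C\omega^3(\pi/\K_P)^{d-2}(\K_S/\K_P)^2\,\Im m\{\bGam_{0,P}(\bz^S-\bz_a)\}:\Im m\{\bGam_{0,P}(\bz^S-\bz_a)\}$, and analogously with $P$ replaced by $S$ (without the $(\K_S/\K_P)^2$ factor) for the shear waves.

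Finally I would use that $\bGam_{0,P}$ and $\bGam_{0,S}$ are symmetric matrix-valued functions, inherited from the symmetry of $\DD_\bx G_P^\omega$ and of $\ID$ in \eqref{211}; hence $\Im m\{\bGam_{0,\alpha}(\bz^S-\bz_a)\}$ is a real symmetric matrix and $\Im m\{\bGam_{0,\alpha}\}:\Im m\{\bGam_{0,\alpha}\}=|\Im m\{\bGam_{0,\alpha}(\bz^S-\bz_a)\}|^2$, which gives exactly \eqref{eqgf1} and \eqref{eqgf2}. The one delicate point is the opening step: justifying that the weighted Helmholtz decomposition (defined through a Neumann problem on the search domain) genuinely annihilates $\OH^S[\bU_j^P]$ and $\OH^P[\bU_j^S]$. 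This rests on uniqueness of the $L^2$ decomposition together with the exact gradient / solenoidal structure of the plane waves, and on the fact that, exactly as in the regime of Lemma \ref{lem-HKI}, boundary contributions are negligible for search points far from $\partial\Omega$; once this is granted, the remainder is the same bookkeeping with the approximations \eqref{approx-e-times-e}, \eqref{approx-eP-times-eP} and \eqref{eq3D} as in Propositions \ref{prop-TD-caseI} and \ref{prop-TD-caseII}.
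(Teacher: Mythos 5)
Your proposal is correct and follows essentially the same route as the paper: starting from \eqref{45}, using that plane $P$-waves are irrotational and plane $S$-waves are solenoidal so that the Helmholtz projections act trivially and kill the cross terms, and then averaging over directions via \eqref{approx-e-times-e}, \eqref{approx-eP-times-eP} and \eqref{eq3D} exactly as in the proof of Proposition \ref{prop-TD-caseI}. Your added justification of $\OH^S[\bU_j^P]=\mathbf{0}$ and $\OH^P[\bU_j^S]=\mathbf{0}$ via uniqueness of the $L^2$ Helmholtz decomposition is a point the paper leaves implicit, but it does not change the argument.
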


\begin{proof}
By using similar arguments as in Proposition \ref{prop-TD-caseI}
and (\ref{45}), we show that the weighted imaging functional
${\I}_{\rm{W}}$ for $n$ plane $P-$waves is given by
\begin{eqnarray*}
\dfrac{1}{n} \sum_{j=1}^{n} {\I}_{\rm{W}}[\bU_j^P](\bz^S) &=& -
C\, \omega^3 \dfrac{1}{n} \Re e\, \sum_{j=1}^{n}\bU^{P}_j(\bz^S)
\cdot \left[
 \Im m \left\{ \bGam_{0,P}(\bz^S - \bz_a) \right\}
\overline{\bU^{P}_j}(\bz_a) \right] \nonumber
\\\nm
&\simeq& - C \omega^3  \dfrac{1}{n}  \Re e\, \sum_{j=1}^{n} e^{i
\K_P (\bz^S - \bz_a).\be_{\theta_j}}\be_{\theta_j} \cdot \left[
\Im m \left\{ \bGam_{0,P}(\bz^S - \bz_a) \right\}\be_{\theta_j}
\right] \nonumber
\\\nm
&\simeq& 4 \mu_0 C \omega^3  (\frac{\pi}{\K_P})^{d-2}
(\frac{\K_S}{\K_P})^2 \left|  \Im m \left\{ \bGam_{0,P}(\bz^S -
\bz_a) \right\} \right|^2.
\end{eqnarray*}
For $n$ plane $S-$waves
\begin{eqnarray*}
\dfrac{1}{n} \sum_{j=1}^{n} {\I}_{\rm{W}}[\bU_j^S](\bz^S) &=& - C
\omega^3 \dfrac{1}{n} \sum_{j=1}^{n}\bU^{S}_j(\bz^S) \cdot\left[
 \Im m \left\{ \bGam_{0,S}(\bz^S - \bz_a) \right\}
\bU^{S}_j(\bz_a) \right] \nonumber
\\
\nm &\simeq& - C  \omega^3  \frac{1}{n} \sum_{j=1}^{n} e^{i \K_S
(\bz^S - \bz_a)\cdot\be_{\theta_j}}\be_{\theta_j}^{\perp} \cdot
\left[ \Im m  \left\{ \bGam_{0,S}(\bz^S - \bz_a) \right\}
\be^{\perp}_{\theta_j} \right] \nonumber
\\
\nm
&\simeq&
 4 \mu_0 C \omega^3 (\frac{\pi}{\K_S})^{d-2}
 \left|  \Im m \left\{ \bGam_{0,S}(\bz^S - \bz_a) \right\}
 \right|^2,
\end{eqnarray*}
where one should use the version (\ref{eq3D}) in dimension 3.
\end{proof}
Proposition \ref{prop-WTD-caseI} shows that ${\I}_{\rm{W}}$,
attains its maximum at $\bz_a$ (see Figure \ref{fig_max1}) and the
coupling term $ \Im m \left\{\bGam_{0,P}(\bz^S-\bz_a)\right\}: \Im
m \left\{\bGam_{0,S} (\bz^S-\bz_a)\right\}$, responsible for the
decreased resolution in ${\I}_{\rm{TD}}$, is absent. Moreover, the
resolution using weighted imaging functional ${\I}_{\rm{W}}$ is
the Rayleigh one, that is, restricted by the diffraction limit of
half a wavelength of the wave impinging upon $\Omega$, thanks to
the term $\left|\Im m  \left\{ \bGam_{0,\alpha}(\bz^S - \bz_a)
\right\}\right|^2$. Finally, it is worth mentioning that
${\I}_{\rm{W}}$ is a topological derivative based imaging
functional. In fact, it is the topological derivative of the
discrepancy functional $c_S \E_f[\bU^S] + c_P \E_f[\bU^P]$, where
$\bU^S$ is an $S$-plane wave and $\bU^P$ is a $P$-plane wave.

\begin{figure}[htp]
  \centering
  \subfigure{\includegraphics[width=6cm]{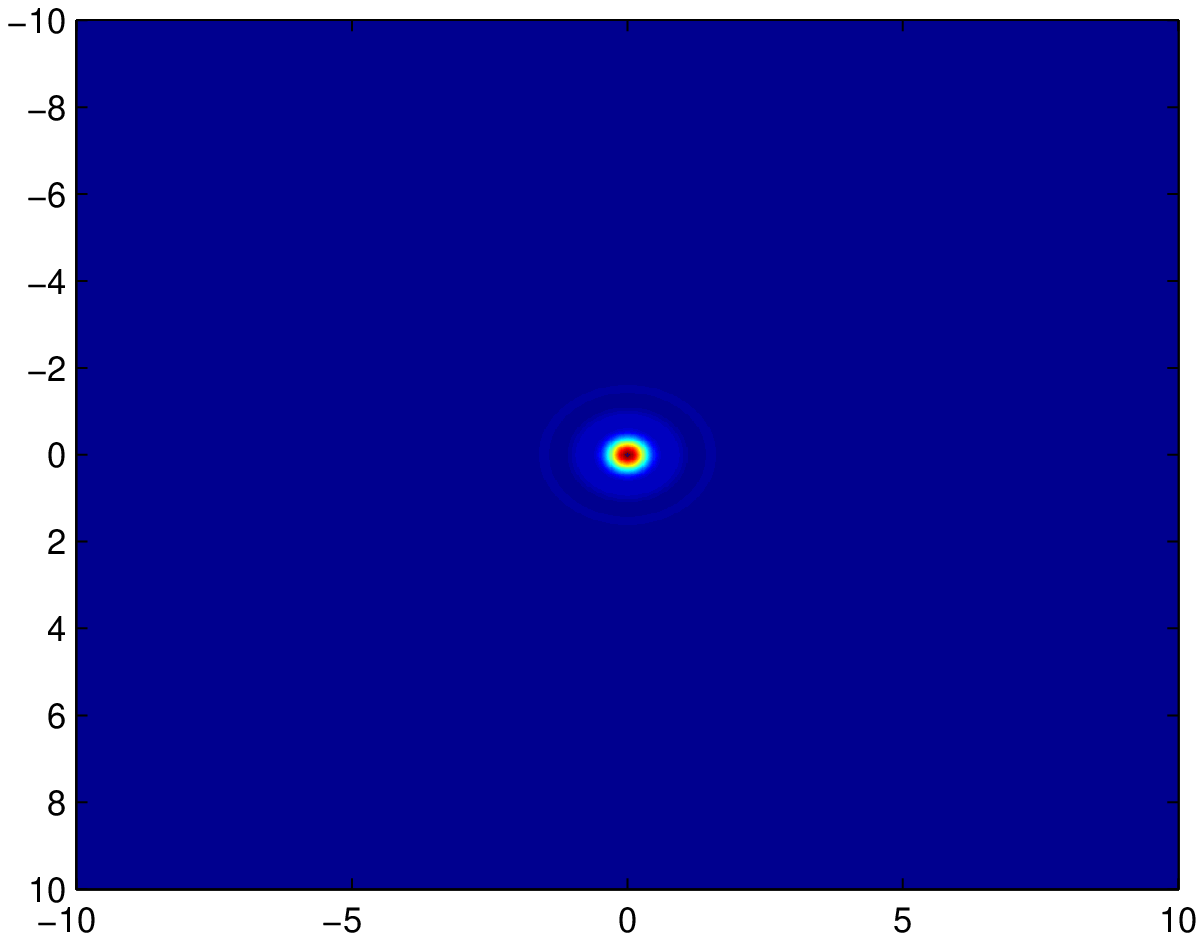}}
  \subfigure{\includegraphics[width=6cm]{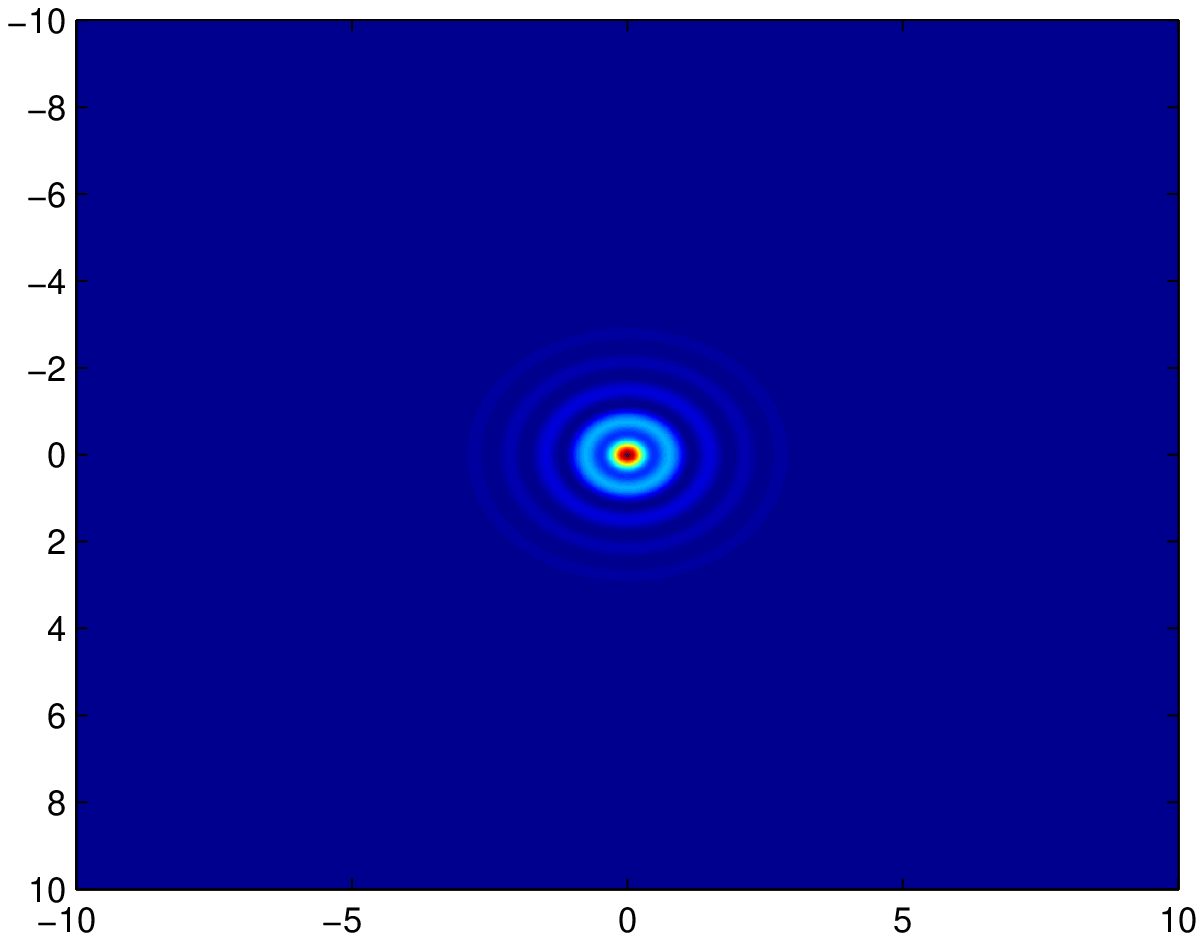}}
  \caption{ Typical plots of $\left|  \Im m
\left\{ \bGam_{0,S}(\bz^S - \bz_a) \right\} \right|^2$ (on the
left) and $\left|  \Im m \left\{ \bGam_{0,P}(\bz^S - \bz_a)
\right\} \right|^2$ (on the right) for $\bz_a =\bm{0}$ and
$c_P/c_S= \sqrt{11}$.}
  \label{fig_max1}
\end{figure}

\subsubsection{Case II: Elasticity contrast}\label{sec:WTD:sensitivity:II}
Suppose $\rho_0 = \rho_1$ and assume for simplicity that  $\MM =
\MM'(B') = \MM(B)$. Then, the weighted  imaging functional
${\I}_{\rm{W}}$ reduces to
\begin{eqnarray}
\I_{\rm{W}}(\bz^S) &=& - \delta^d \bigg[c_P \nabla
\OH^P[\bU(\bz^S)] :\MM \nabla \OH^P[\bw(\bz^S)] + c_S \nabla
\OH^S[\bU(\bz^S)] : \MM \nabla \OH^S \bw(\bz^S)]\bigg] \nonumber
\\
\nm &=& -  \delta^d \Bigg[ c_P \nabla\OH^P[\bU(\bz^S)]:\MM
\bigg(\int_{\partial\Omega}\nabla_{\bz_a}\overline{\bGam_0}(\bx-
\bz_a) \nabla_{\bz^S}\bGam_{0,P}(\bx-\bz^S)d\sigma(\bx)
:\MM\overline{\nabla \bU}(\bz_a) \bigg)\nonumber
\\
&& +c_S \nabla\OH^S[\bU(\bz^S)] :\MM
\bigg(\int_{\partial\Omega}\nabla_{\bz_a}\overline{\bGam_0}(\bx-\bz_a)
\nabla_{\bz^S}\bGam_{0,S}(\bx-\bz^S)d\sigma(\bx) : \MM
\overline{\nabla \bU}(\bz_a) \bigg)\Bigg] \nonumber
\\
\nm &=& - \delta^d \Bigg[ \nabla  \OH^P[\bU(\bz^S)] :\MM \Big( \Im
m \left\{ \big(\nabla^2 \bGam_{0,P}\big)(\bz^S - \bz_a)\right\}
:\MM\overline{\nabla \bU}(\bz_a)\Big) \nonumber
\\
&& +\nabla  \OH^S[\bU(\bz^S)] :\MM  \Big(\Im m \left\{
\big(\nabla^2 \bGam_{0,S}\big)(\bz^S - \bz_a)\right\} :\MM
\overline{\nabla \bU}(\bz_a)\Big) \Bigg].
\end{eqnarray}

We observed in Section \ref{sec:TD:sensitivity:II} that the
resolution of $\I_{\rm{TD}}$ is compromised because of the
coupling term $J_{S,P}(\bz^S)$. We can cancel out this term by
using the weighted imaging functional $\I_{\rm{W}}$. For example,
using analogous arguments as in Proposition \ref{prop-TD-caseII},
we can easily prove the following result.
\begin{prop}
\label{prop-WTD-caseII} Let $\bU^\alpha_j$ be defined in
\eqref{plane-waves}, where $j=1,2,\cdots, n$, for $n$ sufficiently
large. Let $J_{\alpha,\beta}$ be defined by \eqref{J}. Then, for
all $\bz^S\in\Omega$ far from $\partial\Omega$,
\begin{equation}
\ds \dfrac{1}{n} \sum_{j=1}^{n} \I_{\rm{W}}[\bU_j^\alpha](\bz^S)
\simeq 4\delta^d\dfrac{\mu_0}{\omega}
(\frac{\pi}{\K_\alpha})^{d-2} (\frac{\K_S}{\K_\alpha})^2
J_{\alpha,\alpha}(\bz^S), \quad \alpha \in \{ P,S\}.
\end{equation}
\end{prop}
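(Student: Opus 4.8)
The plan is to mirror the proof of Proposition~\ref{prop-TD-caseII} almost verbatim, the only structural change being the replacement of the full Green's tensor $\widetilde{\bGam_0}$ by its single Helmholtz component $\bGam_{0,\alpha}$ in the backpropagator. Concretely, since $\OH^\alpha[\bGam_0]=\bGam_{0,\alpha}$, applying the decomposition operator $\OH^\alpha$ to the expansion \eqref{W-Exp} of $\bw$ (with $\rho_0=\rho_1$) and then to $\bU$ in \eqref{tildeI} collapses the weighted functional to
\begin{equation*}
\I_{\rm{W}}[\bU](\bz^S) \simeq -\delta^d\, c_\alpha\, \Re e\, \nabla\OH^\alpha[\bU](\bz^S):\MM\Big(\Im m\{(\nabla^2\bGam_{0,\alpha})(\bz^S-\bz_a)\}:\MM\overline{\nabla\bU}(\bz_a)\Big),
\end{equation*}
after invoking Lemma~\ref{lem-HKI} to evaluate $\int_{\partial\Omega}\nabla_{\bz_a}\overline{\bGam_0}(\bx-\bz_a)\nabla_{\bz^S}\bGam_{0,\alpha}(\bx-\bz^S)\,d\sigma(\bx)\simeq -\tfrac{1}{c_\alpha\omega}\Im m\{\nabla_{\bz_a}\nabla_{\bz^S}\bGam_{0,\alpha}(\bz^S-\bz_a)\}$; note here the cross terms $\alpha\neq\beta$ drop out by the second identity in Lemma~\ref{lem-HKI}, which is precisely the mechanism eliminating the coupling term $J_{P,S}$.

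The next step is to substitute the plane wave $\bU_j^\alpha$, average over the $n$ directions, and use the already-established approximations. For $\alpha=P$ one uses \eqref{approx-e-times-e} for the $\bz^S$-factor $\be_{\theta_j}\otimes\be_{\theta_j}$ and its Hessian \eqref{approx-A-Hessian} for the $\bz_a$-factor $\be_{\theta_j}\otimes\be_{\theta_j}\otimes\be_{\theta_j}\otimes\be_{\theta_j}$; for $\alpha=S$ one uses \eqref{approx-eP-times-eP} and its Hessian \eqref{approx-eP-times-eP-Hessian} for $\be_{\theta_j}^\perp\otimes\be_{\theta_j}^\perp$ and the corresponding fourth-order tensor $\be_{\theta_j}\otimes\be_{\theta_j}^\perp\otimes\be_{\theta_j}\otimes\be_{\theta_j}^\perp$. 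Because now the \emph{same} component $\bGam_{0,\alpha}$ appears in both the $\bz^S$-slot (through $\OH^\alpha[\bU]$) and the $\bz_a$-slot, the index contraction produces $\big(\MM\Im m\{\nabla^2\bGam_{0,\alpha}(\bz^S-\bz_a)\}\big):\big(\MM\Im m\{\nabla^2\bGam_{0,\alpha}(\bz^S-\bz_a)\}\big)^T=J_{\alpha,\alpha}(\bz^S)$ by definition \eqref{J}, with prefactor $4\delta^d\tfrac{\mu_0}{\omega}(\tfrac{\pi}{\K_\alpha})^{d-2}(\tfrac{\K_S}{\K_\alpha})^2$ collecting the constants from the averaging identity and the $c_\alpha$ weight (in dimension $3$ one uses \eqref{eq3D} in place of \eqref{approx-eP-times-eP} as before).

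I do not anticipate a genuine obstacle: the whole argument is bookkeeping, and the references explicitly say "using analogous arguments as in Proposition~\ref{prop-TD-caseII}." The one point deserving care is verifying that the Helmholtz projection $\OH^\alpha$ acting in the \emph{search variable} $\bz^S$ genuinely selects the $\bGam_{0,\alpha}$-component out of the sum $\widetilde{\bGam_0}=\tfrac{1}{c_P}\bGam_{0,P}+\tfrac{1}{c_S}\bGam_{0,S}$ appearing in $\bw(\bz^S)$ — this rests on $\nabla\cdot\bGam_{0,S}=0$, $\nabla\times\bGam_{0,P}=0$ (stated after \eqref{211}) plus the uniqueness of the Helmholtz decomposition in the search domain — and, correspondingly, that $\nabla\OH^\alpha[\bU_j^\alpha](\bz^S)\simeq \nabla\bU_j^\alpha(\bz^S)$ up to the usual far-field approximation since $\bU_j^P$ is curl-free and $\bU_j^S$ is divergence-free. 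Once that is noted, one simply transcribes the $P$- and $S$-wave computations from the proof of Proposition~\ref{prop-TD-caseII}, keeping only the diagonal $\alpha=\beta$ contribution, and reads off the stated formula.
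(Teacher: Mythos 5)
Your proposal is correct and follows essentially the same route as the paper, which itself only sketches this proof by reducing $\I_{\rm W}$ (via $\OH^\alpha[\bGam_0]=\bGam_{0,\alpha}$ and Lemma \ref{lem-HKI}, whose cross-term identity kills $J_{P,S}$) and then invoking the index-contraction argument of Proposition \ref{prop-TD-caseII} with the averaging identities \eqref{approx-e-times-e}, \eqref{approx-A-Hessian}, \eqref{approx-eP-times-eP}, \eqref{approx-eP-times-eP-Hessian}. The only nit is that your intermediate display keeps the weight $c_\alpha$ while already writing $\Im m\{\nabla^2\bGam_{0,\alpha}\}$ without the $1/(c_\alpha\omega)$ produced by the Helmholtz--Kirchhoff identity; these combine to the $1/\omega$ in the final prefactor, as your stated result correctly reflects.
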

It can be established that $\I_{\rm{W}}$ attains its maximum at
$\bz^S = \bz_a$. Consider, for example, the canonical case of a
circular or spherical inclusion. The following propositions hold.
\begin{prop}
\label{lem-Jpp} Let $D$ be a disk or a sphere. Then for all search
points $\bz^S\in\Omega$,
\begin{eqnarray}
J_{P,P}(\bz^S) &=&  a^2\Big|   \nabla^2 \big( \Im m\,
\bGam_{0,P}\big)(\bz^S - \bz_a)   \Big|^2 + 2ab \Big|\Delta
\big(\Im m \, \bGam_{0,P} \big)(\bz^S - \bz_a) \Big|^2 \nonumber
\\
\nm && + b^2 \Big|\Delta\, {\rm Tr}\big( \Im m\, \bGam_{0,P} \big)
(\bz^S - \bz_a)\Big|^2,
\end{eqnarray}
where ${\rm Tr}$ represents the trace operator and the constants
$a$ and $b$ are defined in \eqref{M-disk2}.
\end{prop}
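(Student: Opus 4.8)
\emph{Proof plan.} After the right set-up the statement is a finite–dimensional linear–algebra identity, with no analysis involved. The plan is to recast $J_{P,P}$ as the trace of the square of a matrix operator and then to exploit the special form \eqref{M-disk2} of the EMT. I would regard any $4$-tensor $\mathbb{A}=(A_{ijkl})$ as a linear operator on the space ${\rm Mat}_d$ of $d\times d$ matrices by $(\mathbb{A}\mathbf{M})_{ij}=\sum_{kl}A_{ijkl}M_{kl}$. With the conventions $\mathbb{A}:\mathbb{B}=\sum_{ijkl}A_{ijkl}B_{ijkl}$ and $\mathbb{A}^T=(A_{klij})$ adopted in \eqref{J}, one checks at once that $\mathbb{A}:\mathbb{B}^T={\rm tr}(\mathbb{A}\circ\mathbb{B})$, the trace being taken over ${\rm Mat}_d$. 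Writing $\mathbb{T}:=\Im m\{(\nabla^2\bGam_{0,P})(\bz^S-\bz_a)\}$ — with the convention $(\nabla^2\bGam)_{ijkl}=\partial_{ik}\bGam_{jl}$ of Section~\ref{sec:TD:sensitivity:II}, and recalling that in \eqref{J} the EMT is contracted against the first pair of slots of the Hessian tensor — the definition \eqref{J} becomes $J_{P,P}(\bz^S)={\rm tr}\big((\MM\,\mathbb{T})^2\big)$, where $\MM\,\mathbb{T}$ denotes the composition $\MM\circ\mathbb{T}$. By \eqref{M-disk2}, on symmetric matrices $\MM$ acts by $\MM\mathbf{M}=a\,\mathbf{M}+b\,({\rm Tr}\,\mathbf{M})\,\ID$, i.e. $\MM=a\,\II_4+b\,P_0$ with $P_0\mathbf{M}:=({\rm Tr}\,\mathbf{M})\,\ID$.

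Next I would record two structural properties of $\mathbb{T}$. By \eqref{211}, $\bGam_{0,P}=-\frac{1}{\mu_0\K_S^2}\DD_\bx G_P^\omega$, so $\mathbb{T}_{ijkl}=-\frac{1}{\mu_0\K_S^2}\Im m\{\partial_i\partial_j\partial_k\partial_l G_P^\omega(\bz^S-\bz_a)\}$ is \emph{totally symmetric} in $i,j,k,l$. Consequently $\mathbb{T}$ maps ${\rm Mat}_d$ into symmetric matrices, depends only on the symmetric part of its argument, is self-adjoint, and has coinciding partial traces $\sum_m\mathbb{T}_{mmkl}=\sum_m\mathbb{T}_{klmm}$. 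Still using that $\bGam_{0,P}$ is (a multiple of) a Hessian, a short computation shows this common partial trace equals $\mathbf{v}_{kl}$ with $\mathbf{v}:=\Delta(\Im m\,\bGam_{0,P})(\bz^S-\bz_a)$, and its further trace is ${\rm Tr}\,\mathbf{v}=\Delta\,{\rm Tr}(\Im m\,\bGam_{0,P})(\bz^S-\bz_a)$.

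Since $\mathbb{T}$ takes values in symmetric matrices, the symmetrizing factor $\II_4$ acts as the identity on it, whence $\MM\,\mathbb{T}=a\,\mathbb{T}+b\,P_0\,\mathbb{T}$. Expanding the square and using cyclicity of the trace,
\[
J_{P,P}(\bz^S)=a^2\,{\rm tr}(\mathbb{T}^2)+2ab\,{\rm tr}(P_0\,\mathbb{T}^2)+b^2\,{\rm tr}(P_0\,\mathbb{T}\,P_0\,\mathbb{T}).
\]
Here ${\rm tr}(\mathbb{T}^2)=\sum_{ijkl}\mathbb{T}_{ijkl}^2=\big|\nabla^2(\Im m\,\bGam_{0,P})(\bz^S-\bz_a)\big|^2$. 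From $(P_0\,\mathbb{T})_{ijkl}=\delta_{ij}\,\mathbf{v}_{kl}$ and the coincidence of the partial traces of $\mathbb{T}$ one gets ${\rm tr}(P_0\,\mathbb{T}^2)=\sum_{kl}\mathbf{v}_{kl}^2=|\mathbf{v}|^2=\big|\Delta(\Im m\,\bGam_{0,P})(\bz^S-\bz_a)\big|^2$; iterating the same identity, ${\rm tr}(P_0\,\mathbb{T}\,P_0\,\mathbb{T})=({\rm Tr}\,\mathbf{v})^2=\big|\Delta\,{\rm Tr}(\Im m\,\bGam_{0,P})(\bz^S-\bz_a)\big|^2$. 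Substituting these three evaluations proves the asserted formula.

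The only point requiring genuine care is the index bookkeeping: one must keep straight the convention $(\nabla^2\bGam)_{ijkl}=\partial_{ik}\bGam_{jl}$ together with the fact, apparent from the proof of Proposition~\ref{prop-TD-caseII}, that in \eqref{J} the EMT contracts against the first pair of slots of the Hessian tensor, and one must verify that both partial traces of $\mathbb{T}$ collapse to $\Delta\bGam_{0,P}$. This last reduction is precisely where the representation $\bGam_{0,P}\propto\DD_\bx G_P^\omega$ (equivalently, $\nabla\times\bGam_{0,P}=\mathbf{0}$) is used; beyond this, everything is a routine computation in ${\rm Mat}_d$, and in particular no asymptotics or estimates are needed.
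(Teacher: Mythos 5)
Your proof is correct and follows essentially the same route as the paper's: both rest on the decomposition $\MM=a\,\II_4+b\,\ID\otimes\ID$, the total symmetry of $\nabla^2\bGam_{0,P}$ coming from $\bGam_{0,P}\propto\DD_\bx G_P^\omega$ (equivalently, $\nabla\times\bGam_{0,P}=\mathbf{0}$), and the identification of the partial trace with $\Delta(\Im m\,\bGam_{0,P})$, which produce exactly the three terms $|\nabla^2(\cdot)|^2$, $|\Delta(\cdot)|^2$ and $|\Delta\,{\rm Tr}(\cdot)|^2$. Your operator-trace formulation with cyclicity is merely a cleaner repackaging of the paper's explicit index expansion.
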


\begin{proof} Since \begin{eqnarray}
\left(\nabla^2 \bGam_{0,P}\right)_{ijkl} = \partial_{ik}
\left(\bGam_{0,P}\right)_{jl},
\end{eqnarray}
it follows from (\ref{M-disk2}) that
\begin{eqnarray}
\left( \MM \nabla^2 \bGam_{0,P} \right)_{ijkl} &=& \ds \sum_{p,q}
m_{ijpq} \big(\nabla^2 \bGam_{0,P} \big)_{pqkl}
\\ \nm &=&\ds\frac{a}{2} \left( \partial_{ik}\left(\bGam_{0,P}\right)_{jl} +
\partial_{jk}\left(\bGam_{0,P}\right)_{il} \right) + b \sum_{q=1}^d
\partial_{qk}\left(\bGam_{0,P}\right)_{ql}  \delta_{ij} \nonumber
\\
\nm &=& \frac{a}{2} \partial_k\bigg(\big( \nabla
\bGam_{0,P}\be_{l} \big)_{ij} + \big( \nabla \bGam_{0,P}\be_{l}
\big)^{T}_{ij}\bigg) + b\partial_k \divg \bigg(\big(
\bGam_{0,P}\be_l \big) \bigg)  \delta_{ij},
\label{M-hessian-component}
\end{eqnarray}
where $\be_l$ is the unit vector in the direction $x_l$.

Now, since $\bGam_{0,p}\be_{l}$  is a $P-$wave, its rotational
part vanishes and the gradient is symmetric, \emph{i.e.},
\begin{equation}
\curl (\bGam_{0,P}\be_{l}) = 0 \quad\text{and}\quad \left(\nabla
\bGam_{0,P}\be_{l}\right)_{ij} = \left(\nabla
\bGam_{0,P}\be_{l}\right)_{ji} = \left(\nabla \bGam_{0,P}\be_{l}
\right)^{T}_{ij}. \label{Gamma-Gradient-Sym}
\end{equation}
Consequently,
\begin{equation}
\nabla \divg \Big(\left(\bGam_{0,P}\be_l \right) \Big) = \curl
\Big( \curl \left(  \bGam_{0,P}\be_l \right)\Big) + \Delta\Big(
\bGam_{0,P}\be_l \Big) = \Delta \Big(\bGam_{0,P}\be_l \Big),
\end{equation}
which, together with \eqref{M-hessian-component} and \eqref{Gamma-Gradient-Sym}, implies
\begin{equation}
\MM \nabla^2 \bGam_{0,P}= a\,\nabla^2 \bGam_{0,P} + b\,\ID \otimes
\Delta \bGam_{0,P}. \label{M-Hessian-final}
\end{equation}
Moreover, by the definition of $\bGam_{0,P}$, its Hessian,
$\nabla^2 \bGam_{0,P}$, is also symmetric. Indeed,
\begin{equation}
\Big(\nabla^2 \bGam_{0,P}\Big)^T_{ijkl} = \partial_{ki} \left(
\bGam_{0,P}\right)_{lj} = - \frac{\mu_0}{\K_S^2}
\partial_{kijl}G_P^{\omega} = \Big(\nabla^2
\bGam_{0,P}\Big)_{ijkl}. \label{Gamma-Hessian-Sym}
\end{equation}

Therefore, by virtue of \eqref{M-Hessian-final} and \eqref{Gamma-Hessian-Sym}, $J_{P,P}$ can be rewritten as
\begin{eqnarray}
&& J_{P,P}(\bz^S)
 =
\Big(a  \Im m \{ \big(\nabla^2  \bGam_{0,P}\big) (\bz^S - \bz_a)
\} + b \ID \otimes  \Im m \{ \big(\Delta \bGam_{0,P} \big)(\bz^S -
\bz_a) \} \Big) \nonumber
\\
&& \qquad :\Big(a \Im m \{\big(\nabla^2 \bGam_{0,P}\big)(\bz^S -
\bz_a) \} + b  \Im m \{ \big(\Delta \bGam_{0,P}\big)(\bz^S -
\bz_a) \} \otimes \ID  \Big). \label{Jpp-2nd-Last}
\end{eqnarray}
Finally, we observe that
\begin{eqnarray}
\Big(\nabla^2 \Im m \left\{ \bGam_{0,P} \right\} \Big)
:\Big(\nabla^2 \Im m \left\{\bGam_{0,P} \right\}\Big)^{T} =
\Big|\nabla^2 \Im m \left\{\bGam_{0,P} \right\}\Big|^2, \label{A}
\end{eqnarray}
\begin{eqnarray}
\nabla^2 \Im m \{\bGam_{0,P} \} :\Big( \ID \otimes \Delta \Im m
\{\bGam_{0,P}\} \Big) &=& \nabla^2 \Im m \{\bGam_{0,P}\} :
\Big(\Delta \Im m \{\bGam_{0,P}\} \otimes \ID \Big) \nonumber
\\
\nm &=& \sum_{i,j,k,l=1}^d \Big(\Im m\, \big (\partial_{ik}
\bGam_{0,P}\big)_{jl}\Big)\delta_{ij} \Delta \Im m\,
\big(\bGam_{0,P}\big)_{kl} \nonumber
\\
\nm &=& \sum_{k,l=1}^d\Bigg(\sum_{i=1}^d \left(\Im m\, \big(
\partial_{ik}\bGam_{0,P} \big)_{il}\right) \Bigg) \Delta\Im m \, \big(\bGam_{0,P}\big)_{kl}
\nonumber
\\
\nm &=& \sum_{k,l=1}^d\Big(\Delta\Im m \, \big(
\bGam_{0,P}\big)_{kl} \Big)^2 \nonumber
\\
\nm &=& \Big|\Delta \Im m \, \{\bGam_{0,P}\}\Big|^2, \label{B}
\end{eqnarray}
and
\begin{eqnarray}
\Big( \ID \otimes \Delta \Im m \{\bGam_{0,P}\} \Big) :\Big(\Delta
\Im m \{\bGam_{0,P}\}\otimes \ID  \Big) &=& \sum_{i,j,k,l=1}^d
\delta_{ij} \Delta \Im m \, \big(\bGam_{0,P}\big)_{kl} \delta_{kl}
\Delta \Im m \, \big(\bGam_{0,P}\big)_{ij} \nonumber
\\
\nm &=& \sum_{i,k=1}^d \Delta \Im m\,  \big( \bGam_{0,P}\big)_{kk}
\Delta \Im m \, \big(\bGam_{0,P}\big)_{ii} \nonumber
\\
\nm &=& \Big|\Delta\, {\rm Tr}(\Im m \, \{\bGam_{0,P}\})\Big|^2.
\label{C}
\end{eqnarray}
We arrive at the conclusion by substituting \eqref{A}, \eqref{B} and \eqref{C} in \eqref{Jpp-2nd-Last}.
\end{proof}

\begin{prop}
\label{lem-Jss} Let $D$ be a disk or a sphere. Then, for all
search points $\bz^S\in\Omega$,  \begin{eqnarray} J_{S,S}(\bz^S)
&=& \frac{a^2}{\mu_0^2} \Bigg[ \frac{1}{\K_S^4} \Big| \nabla^4 \Im
m\left\{ G_S^\omega(\bz^S-\bz_a)\right\}\Big|^2 +
\frac{(d-6)}{4}\Big| \nabla^2 \Im m \left\{ G_S^\omega(\bz^S -
\bz_a) \right\}\Big|^2 \nonumber
\\
&& + \dis\frac{\K_S^4}{4} \Big| \Im
m\left\{G_S^\omega(\bz^S-\bz_a)\right\}\Big|^2 \Bigg] \nonumber\\
&=& \frac{a^2}{\mu_0^2} \Bigg[ \frac{1}{\K_S^4} \sum_{ijkl, k\neq
l}  \Big| \partial_{ijkl} \Im m\left\{
G_S^\omega(\bz^S-\bz_a)\right\}\Big|^2 + \frac{(d-2)}{4}\Big|
\nabla^2 \Im m \left\{ G_S^\omega(\bz^S - \bz_a) \right\}\Big|^2
\nonumber
\\
&& + \dis\frac{\K_S^4}{4} \Big| \Im
m\left\{G_S^\omega(\bz^S-\bz_a)\right\}\Big|^2 \Bigg],
\end{eqnarray}
where $a$ is the constant as in \eqref{M-disk2}.
\end{prop}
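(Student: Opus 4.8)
\emph{Approach.} The plan is to use three facts: the closed form $\MM=a\,\II_4+b\,\ID\otimes\ID$ of the moment tensor of a ball (equation \eqref{M-disk2}); the identity $\divg\bGam_{0,S}=\mathbf 0$ recorded just after \eqref{211}; and the observation that the imaginary part of the fundamental solution is a \emph{smooth} function obeying the homogeneous Helmholtz equation everywhere, $(\Delta+\K_S^2)\Im m\{G_S^\omega\}=0$. Throughout I write $h:=\Im m\{G_S^\omega(\,\cdot-\bz_a)\}$.

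\emph{Step 1 (reduction of $\MM\nabla^2\bGam_{0,S}$).} I would first insert $m_{ijpq}=\tfrac{a}{2}(\delta_{ip}\delta_{jq}+\delta_{iq}\delta_{jp})+b\,\delta_{ij}\delta_{pq}$ into $(\MM\nabla^2\bGam_{0,S})_{ijkl}=\sum_{p,q}m_{ijpq}(\nabla^2\bGam_{0,S})_{pqkl}$, with the convention $(\nabla^2\bGam_{0,S})_{ijkl}=\partial_{ik}(\bGam_{0,S})_{jl}$ used in Proposition \ref{lem-Jpp}. This gives $(\MM\nabla^2\bGam_{0,S})_{ijkl}=\tfrac{a}{2}\big(\partial_{ik}(\bGam_{0,S})_{jl}+\partial_{jk}(\bGam_{0,S})_{il}\big)+b\,\delta_{ij}\,\partial_k\big(\divg(\bGam_{0,S}\be_l)\big)$, and the $b$-term vanishes because $\divg\bGam_{0,S}=\mathbf 0$ --- in sharp contrast with the compressional case of Proposition \ref{lem-Jpp}, where it persists and produces the $b$-dependent terms of $J_{P,P}$. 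Substituting $\bGam_{0,S}=\tfrac{1}{\mu_0\K_S^2}(\K_S^2\ID+\DD_\bx)G_S^\omega$ from \eqref{211} then produces the compact formula $(\MM\nabla^2\bGam_{0,S})_{ijkl}=\tfrac{a}{2\mu_0}\big(\delta_{jl}\partial_{ik}+\delta_{il}\partial_{jk}+\tfrac{2}{\K_S^2}\partial_{ijkl}\big)G_S^\omega$, and the transpose $\mathbb{A}^T_{ijkl}=\mathbb{A}_{klij}$ merely replaces $\delta_{il}\partial_{jk}$ by $\delta_{jk}\partial_{il}$.

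\emph{Step 2 (the contraction).} Next I would plug these into $J_{S,S}(\bz^S)=\big(\MM\Im m\{\nabla^2\bGam_{0,S}\}\big):\big(\MM\Im m\{\nabla^2\bGam_{0,S}\}\big)^T$ from \eqref{J} and expand into the nine scalar sums over $i,j,k,l$: four of $\delta$-$\delta$ type, four of $\delta$-(fourth order) type, and one of (fourth order)-(fourth order) type. Whenever a contraction identifies two differentiation indices a Laplacian appears, and $\Delta h=-\K_S^2 h$ removes it at the cost of $-\K_S^2$. The four $\delta$-$\delta$ sums reduce to $d\,|\nabla^2 h|^2$, $|\nabla^2 h|^2$, $|\nabla^2 h|^2$ and $(\Delta h)^2=\K_S^4|h|^2$ (the $d$ coming from $\sum_{j,l}\delta_{jl}\delta_{jl}$); each $\delta$-(fourth order) sum reduces to $-2|\nabla^2 h|^2$; and the last sum is $\tfrac{4}{\K_S^4}|\nabla^4 h|^2$. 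Adding these and using $J_{S,S}=\tfrac{a^2}{4\mu_0^2}\times(\text{this sum})$ gives the coefficient $(d-6)/4$ in front of $|\nabla^2 h|^2$, that is, the first displayed identity; because $h$ is smooth and $(\Delta+\K_S^2)h=0$ holds on all of $\RR^d$, the identity is valid for every $\bz^S\in\Omega$, with no exception at $\bz^S=\bz_a$. Finally, the second form is obtained by splitting $|\nabla^4 h|^2$ into the terms in which no two of the four differentiation indices coincide and the complementary terms, and applying $\Delta h=-\K_S^2 h$ once more to the latter; this trades $\tfrac{1}{\K_S^4}|\nabla^4 h|^2$ for the restricted sum plus a multiple of $\K_S^4|h|^2$ that is absorbed into $|\nabla^2 h|^2$, shifting the coefficient from $(d-6)/4$ to $(d-2)/4$.

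\emph{Main obstacle.} There is no conceptual hurdle; the difficulty is purely bookkeeping. The delicate point in Step 2 is to keep track of which index identifications produce a genuine $\Delta$ --- where the Helmholtz relation applies --- as opposed to an ordinary mixed second derivative, and to carry the combinatorial weights correctly, notably the multiplicity $d$ and the weights that appear when separating $|\nabla^4 h|^2$ into the restricted sum and its complement. The divergence-free cancellation of the $b$-term, the substitution of \eqref{211}, and the repeated use of $(\Delta+\K_S^2)\Im m\{G_S^\omega\}=0$ are all routine.
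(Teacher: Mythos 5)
Your proposal is correct and follows essentially the same route as the paper: the $b$-part of $\MM$ is killed by $\nabla\cdot\bGam_{0,S}=\mathbf{0}$, the remaining contraction expands into the same nine scalar sums (the paper merely groups them as $T_1+2T_2+T_3$ before substituting \eqref{211}), and repeated use of $(\Delta+\K_S^2)\Im m\{G_S^\omega\}=0$ yields exactly the coefficients $4/\K_S^4$, $d-6$ and $\K_S^4$ that give the first displayed identity. The only imprecision is your one-line justification of the second displayed form --- the $k=l$ part of $|\nabla^4 h|^2$ is converted into an extra $\K_S^4|\nabla^2 h|^2$ contribution, not into ``a multiple of $\K_S^4|h|^2$'' --- but the paper is equally terse there, and your resulting shift of the coefficient from $(d-6)/4$ to $(d-2)/4$ is the intended one.
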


\begin{proof}
As before, we have
\begin{eqnarray}
\bigg(\MM \nabla^2 \bGam_{0,S} \bigg)_{ijkl} &=& \frac{a}{2}\left(
\partial_{ik}\left(\bGam_{0,S}\right)_{jl} +
\partial_{jk}\left(\bGam_{0,S}\right)_{il} \right) +
b\,\partial_k \divg \bigg( \left(\bGam_{0,S}\be_l \right)\bigg)
\delta_{ij}
 \nonumber
 \\
 \nm
 &=&
 \frac{a}{2}\left( \partial_{ik}\left(\bGam_{0,S}\right)_{jl}
 + \partial_{jk}\left(\bGam_{0,S}\right)_{il}\right),
\label{M-Hessian-S-1st}
\end{eqnarray}
and
\begin{eqnarray}
\label{MT-Hessian-S-1st}
\Big(\MM \nabla^2 \bGam_{0,S}\Big)^T_{ijkl}
=
\frac{a}{2}\left( \partial_{ik}\left(\bGam_{0,S}\right)_{jl}
 + \partial_{il}\left(\bGam_{0,S}\right)_{jk}\right).
\end{eqnarray}
Here we have used the facts that $\bGam_{0,S}\be_{l}$ is a
$S-$wave and, $\bGam_{0,S}$ and its Hessian are symmetric,
\emph{i.e.},
\begin{equation}
\partial_{ik}\left(\bGam_{0,S}\right)_{jl}
= \partial_{ki}\left(\bGam_{0,S}\right)_{jl}
= \partial_{ik} \left(\bGam_{0,S}\right)_{lj}
= \partial_{ki} \left(\bGam_{0,S}\right)_{lj}.
\label{Gamm-S-Hessian-Sym}
\end{equation}

Substituting, \eqref{M-Hessian-S-1st} and \eqref{MT-Hessian-S-1st} in  \eqref{J},  we obtain
\begin{eqnarray}
J_{S,S}(\bz^S) &=& \frac{a^2}{4} \sum_{i,j,k,l=1}^d \Im m\left\{
\left( \big(\partial_{ik}
\bGam_{0,S}\big)(\bz^S-\bz_a)\right)_{jl} +
 \left( \big(\partial_{jk} \bGam_{0,S}\big)(\bz^S-\bz_a)\right)_{il}\right\}
\nonumber
\\
&& \qquad\qquad\qquad \times \Im m\left\{ \left(
\big(\partial_{ik}\bGam_{0,S}\big)(\bz^S-\bz_a)\right)_{jl} +
\left( \big(\partial_{il}
\bGam_{0,S}\big)(\bz^S-\bz_a)\right)_{jk} \right\} \nonumber
\\
&:=& \frac{a^2}{4} \Big(T_1(\bz^S) + 2T_2 (\bz^S)+
T_3(\bz^S)\Big), \label{J-I}
\end{eqnarray}
where
$$
\begin{cases}
T_1(\bz^S) &= \ds\sum_{i,j,k,l=1}^d \left(\Im m\left\{
\big(\partial_{i k}
\bGam_{0,S}\big)_{jl}(\bz^S-\bz_a)\right\}\right)
 \left(\Im
m\left\{ \big(\partial_{ik}
\bGam_{0,S}\big)_{jl}(\bz^S-\bz_a)\right\}\right),
\\
\nm T_2(\bz^S) &=  \ds\sum_{i,j,k,l=1}^d \left(\Im m\left\{
\big(\partial_{ik}
\bGam_{0,S}\big)_{jl}(\bz^S-\bz_a)\right\}\right)
 \left(\Im m\left\{ \big(\partial_{il} \bGam_{0,S}\big)_{jk}(\bz^S-\bz_a)\right\}\right),
\\
\nm T_3(\bz^S)&=  \ds\sum_{i,j,k,l=1}^d
 \left(\Im m\left\{ \big(\partial_{jk} \bGam_{0,S}\big)_{il}(\bz^S-\bz_a)\right\}\right)
 \left(\Im m\left\{ \big(\partial_{il} \bGam_{0,S}\big)_{jk}(\bz^S-\bz_a)\right\}\right).
\end{cases}
$$
Notice that
\begin{eqnarray*}
\Im m\left\{ \bGam_{0,S}(\bx)\right\} =
\dis\frac{1}{\mu_0\K_S^2}(\K_S^2\ID+\DD_\bx)\Im m\left\{
G_S^\omega(\bx)\right\},
\end{eqnarray*}
and $\Im m\left\{ G_S^\omega\right\}$ satisfies
\begin{eqnarray} \label{jkeq}
\Delta \Im m\left\{ G_S^\omega\right\} (\bz^S-\bz_a)  + \K_S^2 \Im
m\left\{ G_S^\omega\right\} (\bz^S-\bz_a) = 0 \quad \mbox{for }
\bz^S \neq \bz_a.
\end{eqnarray}
Therefore, the first term $T_1$ can be computed as follows
\begin{eqnarray*}
 T_1(\bz^S) &=&\Big|\nabla^2 \big(\Im m \bGam_{0,S}\big)(\bz^S-\bz_a)\Big|^2
 \\\nm
&=& \dis\frac{1}{\mu_0^2\K_S^4} \sum_{i,j,k,l=1}^d \Big[
\Big(\partial_{ijkl} \big(\Im m \, G_S^\omega \big)(\bz^S-\bz_a)
\Big)^2
+ \K_S^4\delta_{jl} \Big(\partial_{ik} \big(\Im m \, G_S^\omega
\big)(\bz^S-\bz_a) \Big)^2
\\
&& \qquad\qquad\qquad\qquad\quad + 2 \K_S^2 \delta_{jl}
\partial_{ik} \big(\Im m \, G_S^\omega \big)(\bz^S-\bz_a)
\partial_{ijkl} \big(\Im m  G_S^\omega\big)(\bz^S-\bz_a)
\Big].
\end{eqnarray*}
We also have
\begin{eqnarray*}
&&\sum_{i,j,k,l=1}^d 2 \delta_{jl} \partial_{ik} \big(\Im m \,
G_S^\omega\big)(\bz^S-\bz_a) \Big(\partial_{ijkl} \big( \Im m \,
G_S^\omega \big)(\bz^S-\bz_a) \Big)
\\
\nm && \qquad\qquad\qquad =2  \sum_{i,k=1}^d \Big(
\partial_{ik} \big( \Im m \,  G_S^\omega\big)(\bz^S-\bz_a)\Big)
\left(\partial_{ik} \sum_{l=1}^d \partial_{ll} \big(\Im m \,
G_S^\omega\big)(\bz^S-\bz_a) \right)
\\
\nm && \qquad\qquad\qquad =-2 \K_S^2 \sum_{i,k=1}^d\Big(
\partial_{ik} \big(\Im m \, G_S^\omega\big)(\bz^S-\bz_a) \Big)^2,
\end{eqnarray*}
and
\begin{eqnarray*}
 \sum_{i,j,k,l=1}^d \delta_{jl} \Big(\partial_{ik} \big(\Im
m \, G_S^\omega\big)(\bz^S-\bz_a) \Big)^2 = d
\sum_{i,k=1}^d\Big(\partial_{ik} \big(\Im m \,
G_S^\omega\big)(\bz^S-\bz_a) \Big)^2.
\end{eqnarray*}
Consequently, we have
\begin{equation}
\begin{array}{lll}
T_1(\bz^S) &=& \Big| \nabla^2 \big(\Im m
\bGam_{0,S}\big)(\bz^S-\bz_a) \Big|^2 = \frac{1}{\mu_0^2 \K_S^4}
\Big| \nabla^4 \big(\Im m\,
G_S^\omega\big)(\bz^S-\bz_a) \Big|^2 \\
\nm && + \frac{(d-2)}{\mu_0^2} \sum_{i,k=1}^d\Big(\partial_{ik}
\big(\Im m \, G_S^\omega\big)(\bz^S-\bz_a) \Big)^2. \label{T_1}
\end{array}
\end{equation}

Estimation of the term $T_2$ is quite similar. Indeed,
\begin{eqnarray*}
 T_2(\bz^S)
 &=&
 \dis\frac{1}{\mu_0^2\K_S^4} \sum_{i,j,k,l= 1}^d
 \Bigg[ \Big(\partial_{ijkl} \big(\Im m \, G_S^\omega\big)(\bz^S-\bz_a) \Big)^2
 \nonumber
 \\
 \nm
 &&
 \qquad\qquad\qquad\qquad
 + 2 \K_S^2 \delta_{jl} \partial_{ik} \big(
 \Im m\, G_S^\omega\big)(\bz^S-\bz_a)
 \partial_{ijkl}\big(\Im m \, G_S^\omega\big)(\bz^S-\bz_a)
 \nonumber
 \\
 \nm
&& \qquad \qquad \qquad \qquad + \K_S^4\delta_{jl}
\delta_{jk}\Big(\partial_{ik} \big(\Im m\,
G_S^\omega\big)(\bz^S-\bz_a) \Big)\Big(\partial_{il} \big(\Im m \,
G_S^\omega\big)(\bz^S-\bz_a) \Big) \Bigg].
\end{eqnarray*}
Finally, using
\begin{eqnarray*}
  \sum_{i,j,k,l=1}^d  \delta_{jl} \delta_{jk}
  \Big(\partial_{ik}  \big(\Im m \, G_S^\omega\big)(\bz^S-\bz_a) \Big)
  \Big(\partial_{i l} \big(\Im m \, G_S^\omega\big)(\bz^S-\bz_a) \Big)
  =
  \sum_{i,k=1}^d
  \Big(\partial_{ik} \big(\Im m \, G_S^\omega\big)(\bz^S-\bz_a)\Big)^2,
\end{eqnarray*}
we obtain that
\begin{eqnarray}
T_2(\bz^S) = \frac{1}{\mu_0^2 \K_S^4} \Big| \nabla^4 \big(\Im m \,
G_S^\omega\big)(\bz^S-\bz_a) \Big|^2 - \dis\frac{1}{\mu_0^2} \Big|
\nabla^2 \big(\Im m \, G_S^\omega\big)(\bz^S-\bz_a) \Big|^2 .
\label{T_2}
\end{eqnarray}
Similarly,
\begin{eqnarray*}
T_3(\bz^S)  &=& \dis\frac{1}{\mu_0^2\K_S^4} \sum_{i,j,k,l=1}^d
\Bigg[ \Big(\partial_{ijkl} \big(\Im m\,
G_S^\omega\big)(\bz^S-\bz_a) \Big)^2 \nonumber
\\
\nm && \qquad\qquad\qquad\qquad + 2 \K_S^2 \delta_{jl}
\partial_{ik}\big(\Im m \, G_S^\omega\big)(\bz^S-\bz_a)
\Big(\partial_{ijkl} \big(\Im m \,
G_S^\omega\big)(\bz^S-\bz_a)\Big)
\\
\nm && \qquad \qquad \qquad\qquad + \K_S^4\delta_{il} \delta_{jk}
\Big(\partial_{jk} \big(\Im m \, G_S^\omega\big)(\bz^S-\bz_a)
\Big) \Big(\partial_{il} \big(\Im m\,
G_S^\omega\big)(\bz^S-\bz_a)\Big) \Bigg].
\end{eqnarray*}
By virtue of
\begin{eqnarray*}
 && \sum_{i,j,k,l=1}^d \delta_{il} \delta_{jk}\left(\partial_{jk}
 \big(\Im m \, G_S^\omega\big)(\bz^S-\bz_a)\right)
 \left(\partial_{il} \big(\Im m \, G_S^\omega\big)(\bz^S-\bz_a)\right)
\nonumber
\\
\nm
&&
\qquad\qquad\qquad\qquad
 = \sum_{i,k=1}^d
 \Big( \partial_{kk} \big(\Im m \, G_S^\omega\big)(\bz^S-\bz_a) \Big) \Big( \partial_{ii}
 \big(\Im m \, G_S^\omega\big)(\bz^S-\bz_a)\Big)
 \nonumber
 \\
 \nm
&& \qquad\qquad\qquad\qquad =\K_S^4 \Big( \Im m \,
G_S^\omega(\bz^S-\bz_a) \Big)^2,
\end{eqnarray*}
we have
\begin{eqnarray}
T_3(\bz^S) &=& \frac{1}{\mu_0^2 \K_S^4} \Big| \nabla^4 \big(\Im m
\, G_S^\omega\big)(\bz^S-\bz_a)\Big|^2 -
 \dis\frac{2}{\mu_0^2}
 \Big| \nabla^2 \big(\Im m \, G_S^\omega\big)(\bz^S-\bz_a) \Big|^2
 \nonumber
 \\
 \nm
 &&
 +
 \dis\frac{\K_S^4}{\mu_0^2}
 \Big| \Im m \, G_S^\omega(\bz^S-\bz_a) \Big|^2.
 \label{T_3}
\end{eqnarray}
We conclude the proof by substituting \eqref{T_1}, \eqref{T_2} and
\eqref{T_3} in \eqref{J-I} and using again \eqref{jkeq}.
\end{proof}

Figure \ref{fig_max2} shows typical plots of $J_{\alpha,\alpha}$
for $\alpha \in \{ P,S\}$.
\begin{figure}[htp]
  \centering
  \subfigure{\includegraphics[width=6cm]{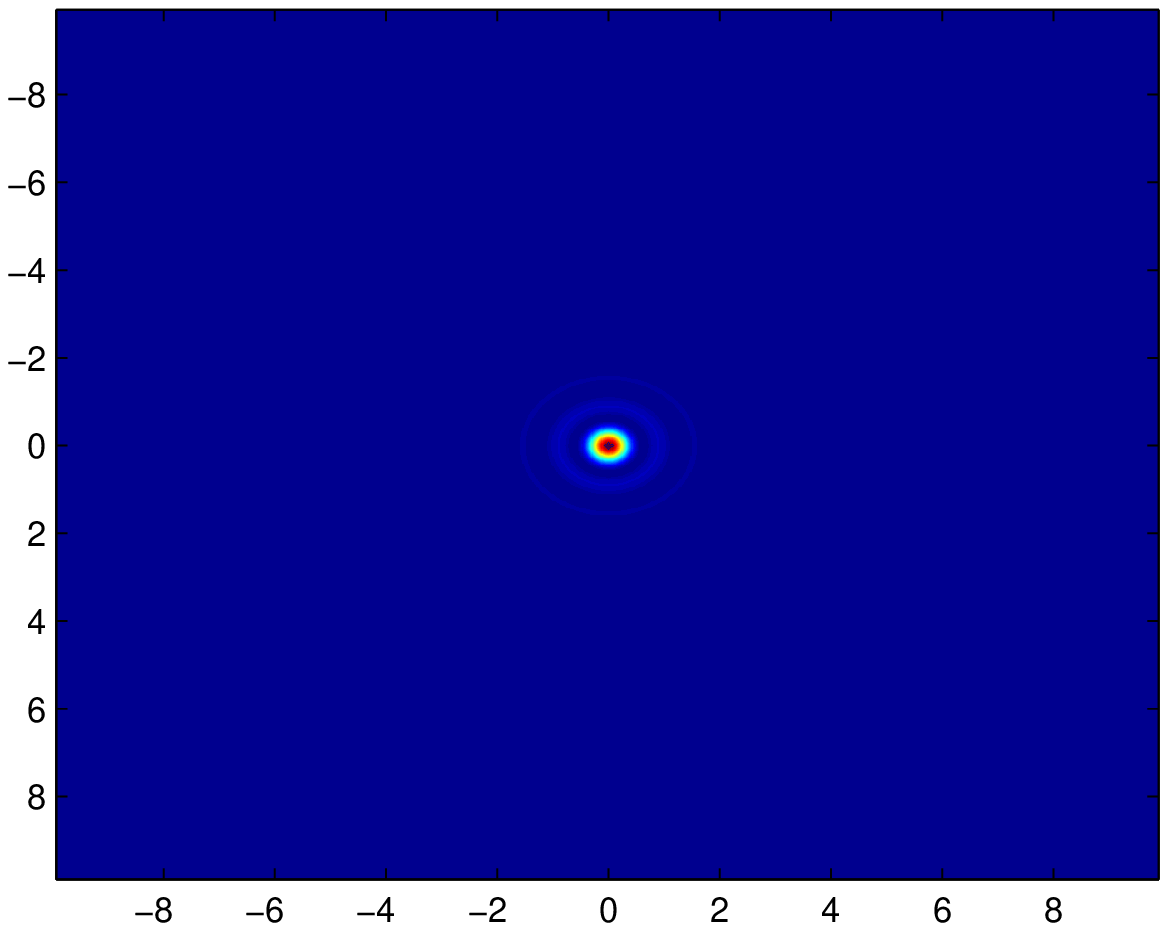}}
  \subfigure{\includegraphics[width=6cm]{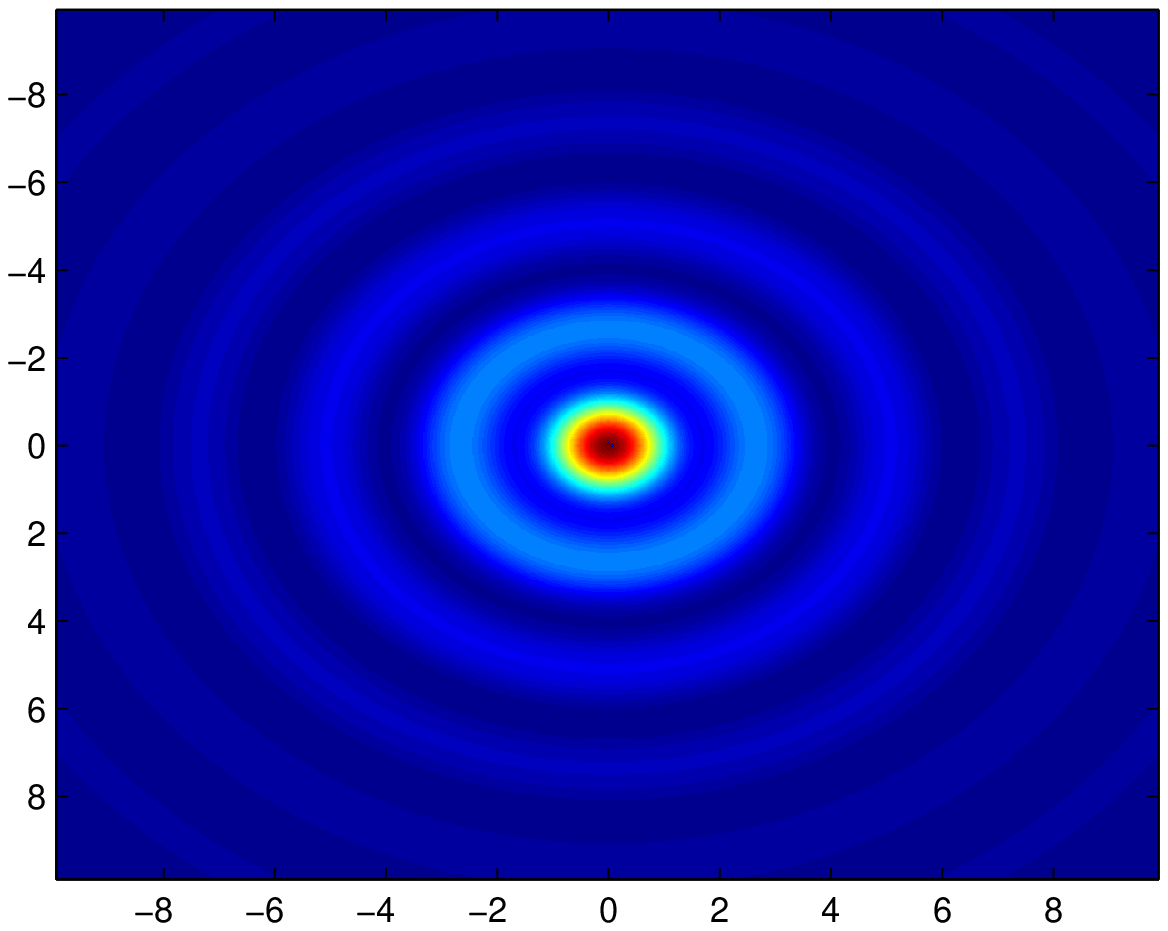}}
  \caption{ Typical plots of $J_{SS}$ (on the
right) and $J_{PP}$ (on the right) for $\bz_a =\bm{0}$ and
$c_P/c_S= \sqrt{11}$.}
  \label{fig_max2}
\end{figure}

\section{Statistical stability with measurement noise}
\label{sec:ssmeas}

Let $\bU^P_j$ and $\bU^S_j$ be as before. Let $\{\bU_j\}$ be plane
waves. Define
\begin{equation}
\IWF [\{\bU_j\}](\bz^S) = \frac{1}{n} \sum_{j=1}^n \I_{\rm W}
[\bU_j](\bz^S). \label{eq:IWFdef}
\end{equation}
In the previous section, we have analyzed the resolution of the
imaging functional $\IWF$ in the ideal situation where the
measurement $\bu_{\rm{meas}}$ is accurate. Here, we analyze how
the result will be modified when the measurement is corrupted by
noise.

\subsection{Measurement noise model}

We consider the simplest model for the measurement noise. Let
$\bu_{\rm{true}}$ be the accurate value of the  elastic
displacement field. The measurement $\bu_{\rm{meas}}$ is then
\begin{equation}
\bu_{\rm{meas}} (\bx) = \bu_{\rm{true}}(\bx) +
\bnu_{\rm{noise}}(\bx), \label{eq:umeasnoise}
\end{equation}
that is the accurate value corrupted by measurement noise modeled
as $\bnu_{\rm{noise}}(\bx)$, $\bx\in \partial \Omega$. Note that
$\bnu_{\rm{noise}}(\bx)$ is valued in $\CC^d, d = 2,3$.

Let $\EE$ denote the  expectation with respect to the statistics
of the measurement noise. We assume that
$\{\bnu_{\rm{noise}}(\bx), \bx \in \partial \Omega\}$ is mean zero
circular Gaussian and satisfies
\begin{equation}
\EE [\bnu_{\rm{noise}}(\by) \otimes
\overline{\bnu_{\rm{noise}}(\by')}] = \signoise^2
\delta_{\by}(\by') \ID. \label{eq:noimod}
\end{equation}
This means that firstly the measurement noises at different
locations on the boundary are uncorrelated; secondly, different
components of the measurement noise are uncorrelated, and thirdly
the real and imaginary parts are uncorrelated. Finally, the noise
has variance $\signoise^2$.

In the imaging functional $\IWF$, the elastic medium is probed by
multiple plane waves with different propagating directions, and
consequently multiple measurements are obtained at the boundary
accordingly. We assume that two measurements corresponding to two
different plane wave propagations are uncorrelated. Therefore, it
holds that
\begin{equation}
\EE [\bnu_{\rm{noise}}^j(\by) \otimes
\overline{\bnu_{\rm{noise}}^l(\by')}] = \signoise^2
\delta_{jl}\delta_{\by}(\by') \ID, \label{eq:twonoi}
\end{equation}
where $j$ and $l$ are labels for the measurements and
$\delta_{jl}$ is the Kronecker symbol.

\subsection{Propagation of measurement noise in the back-propagation step}

The measurement noise affects the topological derivative based
imaging functional through the back-propagation step which builds
the function $\bw$ in \eqref{W-Def}. Due to the noise, we have
\begin{equation}
\bw (\bx) = \S_\Omega^\omega
\bigg[\overline{\left(\dis\frac{1}{2}{I} - \Kcal_\Omega^\omega
\right) [\bU-\bu_{\rm{true}}-\bnu_{\rm{noise}}]} \bigg](\bx) =
\bw_{\rm{true}}(\bx) + \bw_{\rm{noise}}(\bx), \label{eq:bwdec}
\end{equation}
for $\bx\in\Omega$. Here, $\bw_{\rm{true}}$ is the result of
back-propagating only the accurate data while $\bw_{\rm{noise}}$
is that of back-propagating the measurement noise. In particular,
\begin{equation}
\bw_{\rm{noise}}(\bx) = -\S_\Omega^\omega
\bigg[\overline{\left(\dis\frac{1}{2}{I} - \Kcal_\Omega^\omega
\right) [\bnu_{\rm{noise}}]} \bigg](\bx), \quad \bx \in \Omega.
\label{eq:wnoi}
\end{equation}

To analyze the statistics of $\bw_{\rm{noise}}$, we proceed in two
steps. First define
\begin{equation}
\bnu_{\rm{noise,1}}(\bx) = \left(\dis\frac{1}{2}{I} -
\Kcal_\Omega^\omega \right) [\bnu_{\rm{noise}}] (\bx), \quad \bx
\in \partial \Omega. \label{eq:bnu1}
\end{equation}
Then, due to linearity,  $\bnu_{\rm{noise,1}}$ is also a mean-zero
circular Gaussian random process. Its covariance function can be
calculated as
\begin{equation*}
\begin{aligned}
& \EE [\bnu_{\rm{noise,1}}(\by) \otimes \overline{\bnu_{\rm{noise,1}}(\by')} ] = \frac{1}{4} \EE[\bnu_{\rm{noise}}(\by)\otimes \overline{\bnu_{\rm{noise}}(\by')}] - \frac{1}{2} \EE [ \Kcal_\Omega^\omega[\bnu_{\rm{noise}}](\by)\otimes \overline{\bnu_{\rm{noise}}(\by')}]\\
-\frac{1}{2} & \EE [ \bnu_{\rm{noise}}(\by)\otimes
\overline{\Kcal_\Omega^\omega[\bnu_{\rm{noise}}](\by')}] + \EE [
\Kcal_\Omega^\omega[\bnu_{\rm{noise}}](\by) \otimes
\overline{\Kcal_\Omega^\omega[\bnu_{\rm{noise}}](\by')}].
\end{aligned}
\end{equation*}
The terms on the right-hand side can be evaluated using the
statistics of $\bnu_{\rm{noise}}$ and the explicit expression of
$\Kcal_\Omega^\omega$. Let us calculate the last term. It has the
expression
\begin{equation*}
\EE \bigg[\int_{\partial \Omega} \int_{\partial \Omega}
\left[\frac{\partial \bGam_0}{\partial \nu_\bx}(\by-\bx)
\bnu_{\rm{noise}}(\bx)\right] \otimes
\overline{\left[\frac{\partial \bGam_0}{\partial
\nu_{\bx'}}(\by'-\bx') \bnu_{\rm{noise}}(\bx')\right]}
d\sigma(\bx) d\sigma(\bx') \bigg].
\end{equation*}
Using the coordinate representations and the summation convention,
we can calculate the $jk$th element of this matrix by
\begin{equation*}
\begin{aligned}
&\int_{\partial \Omega} \int_{\partial \Omega} \left[\frac{\partial \bGam_0}{\partial \nu_\bx}(\by-\bx)\right]_{jl}  \left[\frac{\partial \overline{\bGam_0}}{\partial \nu_{\bx'}}(\by'-\bx')\right]_{ks} \EE [\bnu_{\rm{noise}}(\bx) \otimes \overline{\bnu_{\rm{noise}}(\bx')}]_{ls} d\sigma(\bx) d\sigma(\bx')\\
=& \signoise^2 \int_{\partial \Omega} \left[\frac{\partial \bGam_0}{\partial \nu_\bx}(\by-\bx)\right]_{js}
\left[\frac{\partial \overline{\bGam_0}}{\partial \nu_\bx}(\by'-\bx)\right]_{ks} d\sigma(\bx)\\
=& \signoise^2 \int_{\partial \Omega} \frac{\partial
\bGam_0}{\partial \nu_\bx}(\by-\bx) \frac{\partial
\overline{\bGam_0}}{\partial \nu_\bx}(\bx-\by')d\sigma(\bx).
\end{aligned}
\end{equation*}
In the last step, we used the reciprocity relation
\begin{equation}
\bGam_0(\by-\bx) = [\bGam_0(\bx-\by)]^T, \label{eq:recip}
\end{equation}
for any $\bx, \by \in \R^d$.

The other terms in the covariance function of
$\bnu_{\rm{noise,1}}$ can be similarly calculated. Consequently,
we have
\begin{equation}
\begin{aligned}
\EE [\bnu_{\rm{noise,1}}(\by) \otimes \overline{\bnu_{\rm{noise,1}}(\by')} ] = & \frac{\signoise^2}{4} \delta_{\by}(\by') \ID - \frac{\signoise^2}{2}\left[\frac{\partial \bGam_0}{\partial \nu_{\by'}}(\by-\by') + \frac{\partial \overline{\bGam_0}}{\partial \nu_\by}(\by-\by')\right]\\
&+ \signoise^2 \int_{\partial \Omega} \frac{\partial
\bGam_0}{\partial \nu_\bx}(\by-\bx) \frac{\partial
\overline{\bGam_0}}{\partial \nu_\bx}(\bx-\by')d\sigma(\bx).
\label{eq:covnu1}
\end{aligned}
\end{equation}

From the expression of $\IWF$ and $\I_{\rm{W}}$, we see that only
the Helmholtz decomposition of $\bw_{\rm{meas}}$, that is
$\OH^P[\bw]$ and $\OH^S[\bw]$, are used in the imaging functional.
Define $\bw^\alpha = \OH^\alpha[\bw], \alpha \in \{P, S\}$. Using
the decomposition in \eqref{eq:bwdec}, we can similarly define
$\bw^\alpha_{\rm{true}}$ and $\bw^\alpha_{\rm{noise}}$. In
particular, we find that
\begin{equation*}
\bw^\alpha_{\rm{noise}}(\bx) = -\int_{\partial \Omega}
\bGam_{0,\alpha}(\bx - \by) \overline{\bnu_{\rm{noise,1}}(\by)}
d\sigma(\by), \quad \bx \in \Omega.
\end{equation*}
This is a mean zero $\CC^d$-valued circular Gaussian random field
with parameters in $\Omega$. The $jk$th element of its covariance
function is evaluated by
\begin{equation*}
\EE [ \bw^\alpha_{\rm{noise}}(\bx) \otimes \overline{\bw^\alpha_{\rm{noise}}}(\bx')]_{jk} = 
\sum_{l,s} \int_{(\partial \Omega)^2}
(\bGam_{0,\alpha}(\bx-\by))_{jl}
(\overline{\bGam_{0,\alpha}}(\bx'-\by'))_{ks}
\EE[\overline{\bnu_{\rm{noise,1}}}(\by) \otimes
\bnu_{\rm{noise,1}}(\by')]_{ls}.
\end{equation*}
Using the statistics of $\bnu_{\rm{noise,1}}$ derived above, we
find that
\begin{equation*}
\begin{aligned}
&&\EE [ \bw^\alpha_{\rm{noise}}(\bx) \otimes \overline{\bw^\alpha_{\rm{noise}}}(\bx')] =\frac{\signoise^2}{4} \int_{\partial \Omega} \bGam_{0,\alpha}(\bx-\by)\overline{\bGam_{0,\alpha}}(\by-\bx') d\sigma(\by)\\
 - \frac{\signoise^2}{2} &&\int_{(\partial \Omega)^2}\bGam_{0,\alpha}(\bx-\by)\Big[\frac{\partial \bGam_0}{\partial \nu_{\by}}(\by-\by') + \frac{\partial \overline{\bGam_0}}{\partial \nu_{\by'}}(\by-\by') \Big] \overline{\bGam_{0,\alpha}}(\by'-\bx') d\sigma(\by) d\sigma(\by')\\
+ \signoise^2 &&\int_{(\partial \Omega)^3}
\bGam_{0,\alpha}(\bx-\by)\frac{\partial
\overline{\bGam_0}}{\partial \nu_{\bz}}(\by-\bz) \frac{\partial
\bGam_0}{\partial \nu_{\bz}}(\bz-\by')
\overline{\bGam_{0,\alpha}}(\by'-\bx') d\sigma(\bz)d\sigma(\by)
d\sigma(\by').
\end{aligned}
\end{equation*}
Thanks to the Helmholtz-Kirchhoff identities, the above expression
is simplified to
\begin{eqnarray*}
&\EE [ \bw^\alpha_{\rm{noise}}(\bx) \otimes \overline{\bw^\alpha_{\rm{noise}}}(\bx')] =
&-\frac{\signoise^2}{4c_\alpha \omega} \Im m\{\bGam_{0,\alpha}(\bx-\bx')\}\notag\\
&  & + \frac{\signoise^2}{2c_\alpha \omega} \int_{\partial \Omega} \bGam_{0,\alpha}(\bx-\by) \frac{\partial \Im m\{\bGam_{0,\alpha}(\by-\bx')\}}{\partial \nu_{\by}} d\sigma(\by)\notag\\
&  & + \frac{\signoise^2}{2c_\alpha \omega} \int_{\partial \Omega} \frac{\partial \Im m\{\bGam_{0,\alpha}(\bx-\by')\}}{\partial \nu_{\by'}} \overline{\bGam_{0,\alpha}}(\by'-\bx') d\sigma(\by')\notag\\
&  & - \frac{\signoise^2}{(c_\alpha \omega)^2} \int_{\partial
\Omega} \frac{\partial \Im
m\{\bGam_{0,\alpha}(\bx-\bz)\}}{\partial \nu_{\bz}}
\frac{\partial \Im m \{\bGam_{0,\alpha}(\bz-\bx')\}}{\partial \nu_{\bz}}  d\sigma(\bz)\notag .\\
\end{eqnarray*}
Assuming that $\bx, \bx'$ are far away from the boundary, we have
from \cite{TrElastic} the asymptotic formula that
\begin{equation}
\frac{\partial \bGam_{0,\alpha}(\bx-\by)}{\partial \nu_{\by}}
\simeq ic_\alpha \omega \bGam_{0,\alpha}(\bx-\by),
\label{eq:gsommer}
\end{equation}
where the error is of order $o(|\bx-\by|^{1/2-d})$. Using this
asymptotic formula and the Helmholtz-Kirchhoff identity (taking
the imaginary part of the identity), we obtain that
\begin{equation}
\EE [ \bw^\alpha_{\rm{noise}}(\bx) \otimes
\overline{\bw^\alpha_{\rm{noise}}}(\bx')] = -
\frac{\signoise^2}{4c_\alpha \omega} \Im
m\{\bGam_{0,\alpha}(\bx-\bx')\}. \label{eq:covwalpha}
\end{equation}

In conclusion, the random field $\bw^\alpha_{\rm{noise}}(\bx), \bx
\in \Omega$, is a Gaussian field with mean zero and covariance
function (\ref{eq:covwalpha}). It is a speckle pattern, {\it
i.e.}, a random cloud of hot spots where typical diameters are of
the order of the wavelength and whose typical amplitudes are of
the order of $\signoise/(2 \sqrt{c_\alpha \omega})$.

\subsection{Stability analysis}

Now we are ready to analyze the statistical stability of the
imaging functional $\IWF$. As before, we consider separate cases
where the medium has only density contrast or only elastic
contrast.

\subsubsection{Case I: Density contrast}

Using the facts that the plane waves $\bU^P$'s are irrotational
and that the plane waves $\bU^S$'s are solenoidal, we see that for
a searching point $\bz \in \Omega$, and $\alpha \in \{ P,S\}$,
\begin{equation*}
\IWF[\{\bU_j^\alpha\}](\bz) = c_\alpha
\omega^2\left(\frac{\rho_1'}{\rho_0} - 1\right) |B'| \frac{1}{n}
\sum_{j=1}^n \Re e\{\bU^\alpha_j(\bz) \cdot
(\bw^\alpha_{j,\rm{true}}(\bz) +
\bw^\alpha_{j,\rm{noise}}(\bz))\}.
\end{equation*}
We observe the following: The contribution of
$\{\bw^\alpha_{j,\rm{true}}\}$ are exactly those in Proposition
\ref{prop-WTD-caseI}. On the other hand, the contribution of
$\{\bw^\alpha_{j,\rm{noise}}\}$ forms a field corrupting the true
image. With $C_\alpha := c_\alpha \omega^2
|B'|(\rho_1'/\rho_0-1)$, the covariance function of the corrupted
image, can be calculated as follows. Let $\bz' \in \Omega$. We
have
\begin{equation*}
\begin{aligned}
& \mathrm{Cov}
(\IWF[\{\bU_j^\alpha\}](\bz),\IWF[\{\bU_j^\alpha\}](\bz')) =
C_\alpha^2 \frac{1}{n^2} \sum_{j,l=1}^n \EE [\Re e\{\bU^\alpha_j
\cdot \bw^\alpha_{j,\rm{noise}} \}
\Re e\{\bU^\alpha_l \cdot \bw^\alpha_{l,\rm{noise}} \}]\\
=& C_\alpha^2 \frac{1}{2n^2} \sum_{j=1}^n \Re e \left\{
\bU^\alpha_j(\bz) \cdot \EE [\bw^\alpha_{j,\rm{noise}}(\bz)
\otimes \overline{\bw^\alpha_{j,\rm{noise}}}(\bz')]
\overline{\bU^\alpha_j(\bz')}\right\}.
\end{aligned}
\end{equation*}
To get the second equality, we used the fact that
$\bw^\alpha_{j,\rm{noise}}$ and $\bw^\alpha_{l,\rm{noise}}$ are
uncorrelated unless $j=l$. Thanks to the statistics
\eqref{eq:covwalpha},  the covariance of the image is given by
\begin{equation*}
- C_\alpha^2 \frac{\signoise^2}{4c_\alpha \omega}\frac{1}{2n^2}
\Re e \sum_{j=1}^n e^{i\K_\alpha(\bz-\bz')\cdot \be_{\theta_j}}
\be_{\theta_j}^\alpha \cdot [\Im m\{\bGam_{0,\alpha}(\bz-\bz')\}
\be_{\theta_j}^\alpha],
\end{equation*}
where $\be_{\theta_j}^P = \be_{\theta_j}$ and $\be_{\theta_j}^S=
\be_{\theta_j}^\perp$.

Using the same arguments as those in the proof of Proposition
\ref{prop-WTD-caseI}, we obtain that
\begin{equation}
\mathrm{Cov}
(\IWF[\{\bU_j^\alpha\}](\bz),\IWF[\{\bU_j^\alpha\}](\bz')) =
C_\alpha' \frac{\signoise^2}{2n} \lvert\Im m
\{\bGam_{0,\alpha}(\bz - \bz') \}\rvert^2, \label{eq:covspmP}
\end{equation}
where the constant $$C_\alpha' = c_\alpha\omega^3 \mu_0 |B'|^2
({\frac {\rho_1'} {\rho_0}} - 1)^2 ({\frac \pi {\K_\alpha}})^{d-2}
(\frac{\K_S}{\K_\alpha})^2 .$$

The following remarks hold. Firstly, the perturbation due to noise
has small typical values of order $\signoise/\sqrt{2n}$ and
slightly affects the peak of the imaging functional $\IWF$.
Secondly, the typical shape of the hot spot in the perturbation
due to the noise is exactly of the form of the main peak of $\IWF$
obtained in the absence of noise. Thirdly, the use of multiple
directional plane waves reduces the effect of measurement noise on
the image quality.

From (\ref{eq:covspmP}) it follows that the variance of the
imaging functional $\IWF$ at the search point $\bz$ is given by
\begin{equation} \label{vartd}
\mathrm{Var} (\IWF[\{\bU^\alpha_j\}](\bz)) = C_\alpha'
\frac{\signoise^2}{2n} \lvert\Im m \{\bGam_{0,\alpha}(\bm{0})
\}\rvert^2.
\end{equation}
Define the  Signal-to-Noise Ratio (SNR) by
$$
\mathrm{SNR} :=
\frac{\EE[\IWF[\{\bU^\alpha_j\}](\bz_a)]}{\mathrm{Var}
(\IWF[\{\bU^\alpha_j\}](\bz_a))^{1/2}},
$$
where $\bz_a$ is the true location of the inclusion. From
(\ref{eqgf1}),  (\ref{eqgf2}), and (\ref{vartd}), we have
\begin{equation}
\mathrm{SNR} = \frac{4  \sqrt{2 \pi^{d-2} n \omega^{5-d}
\rho_0^{3} c_\alpha^{d-1}} \delta^d |B|
|\rho_1-\rho_0|}{\signoise} \lvert\Im m \{\bGam_{0,\alpha}(\bm{0})
\}\rvert. \label{eq:SNRm}
\end{equation}
From (\ref{eq:SNRm}), the SNR is proportional to the contrast
$|\rho_1-\rho_0|$ and the volume of the inclusion $\delta^d |B|$,
over the standard deviation of the noise, $\signoise$.
\subsubsection{Case II: Elasticity contrast}

In the case of elastic contrast, the  imaging functional becomes
for $\bz\in \Omega$
\begin{equation}
\IWF[\{\bU_j^\alpha\}](\bz) = c_\alpha \frac{1}{n} \sum_{j=1}^n
\nabla  \bU^\alpha_j(\bz) : \MM'(B') (\nabla
\bw^\alpha_{j,\rm{true}}(\bz) + \nabla
\bw^\alpha_{j,\rm{noise}}(\bz)).
\label{eq:IWF_e}
\end{equation}
Here, $\bw^\alpha_{j,\rm{true}}$ and $\bw^\alpha_{j,\rm{noise}}$
are defined in the last section. They correspond to the
backpropagation of pure data and that of the measurement noise.
The contribution of $\bw^\alpha_{j,\rm{true}}$ is exactly the
imaging functional with unperturbed data and it is investigated in
Proposition \ref{prop-WTD-caseII}. The contribution of
$\bw^\alpha_{j,\rm{noise}}$ perturbs the true image. For $\bz,
\bz' \in \Omega$,  the covariance function of the TD noisy image
is given by
\begin{equation*}
\begin{aligned}
& \mathrm{Cov}
(\IWF[\{\bU_j^\alpha\}](\bz),\IWF[\{\bU_j^\alpha\}](\bz'))\\=
&c_\alpha^2 \frac{1}{n^2} \sum_{j,l=1}^n \EE [\Re
e\{\nabla\bU^\alpha_j(\bz) : \MM' \nabla
\bw^\alpha_{j,\rm{noise}}(\bz)\}
\Re e\{\nabla \bU^\alpha_l(\bz') : \MM' \nabla \bw^\alpha_{l,\rm{noise}}(\bz') \}]\\
= &c_\alpha^2 \frac{1}{2 n^2} \sum_{j,l=1}^n \Re e\EE
[(\nabla\bU^\alpha_j(\bz) : \MM' \nabla
\bw^\alpha_{j,\rm{noise}}(\bz)) \overline{(\nabla
\bU^\alpha_l(\bz') : \MM' \nabla
 \bw^\alpha_{l,\rm{noise}}(\bz'))}]\\
=& c_\alpha^2 \frac{1}{2n^2} \sum_{j=1}^n \Re e \left\{ \nabla
\bU^\alpha_j(\bz) : \MM' \Big[\EE [\nabla
\bw^\alpha_{j,\rm{noise}}(\bz) \overline{\nabla
\bw^\alpha_{j,\rm{noise}}}(\bz')]:\MM' \overline{\nabla
\bU^\alpha_j}(\bz') \Big]\right\}.
\end{aligned}
\end{equation*}
Using \eqref{eq:covwalpha}, we find that
\begin{equation*}
\EE [\nabla \bw^\alpha_{j,\rm{noise}}(\bz) \overline{\nabla
\bw^\alpha_{j,\rm{noise}}}(\bz')] = - \frac{\signoise^2}{4c_\alpha
\omega} \Im m \nabla_{\bz} \nabla_{\bz'}
\{\bGam_{0,\alpha}(\bz-\bz')\}.
\end{equation*}
After substituting this term into the expression of the covariance
function, we find that it becomes
\begin{equation*}
\frac{c_\alpha \signoise^2}{4\omega} \frac{1}{2n} \sum_{j=1}^n \Re
e \left\{ \nabla \bU^\alpha_j(\bz) : \MM' \Big[\Im m\big\{
\nabla^2 \bGam_{0,\alpha}(\bz-\bz') \big\}:\MM' \overline{\nabla
\bU^\alpha_j}(\bz') \Big]\right\}.
\end{equation*}
The sum has exactly the form that was analyzed in the proof of
Proposition \ref{prop-TD-caseII}. Using similar techniques, we
finally obtain that
\begin{equation}
\mathrm{Cov}
(\IWF[\{\bU_j^\alpha\}](\bz),\IWF[\{\bU_j^\alpha\}](\bz')) = \mu_0
\big(\frac{c_\alpha}{\omega}\big)^3\big(\frac{\pi}{\K_\alpha}\big)^{d-2}
(\frac{\K_S}{\K_\alpha})^2 \frac{\signoise^2}{2n}
J_{\alpha,\alpha}(\bz,\bz'), \label{eq:covIWne}
\end{equation}
where $J_{\alpha,\alpha}$ is defined by \eqref{J}.  The variance
of the TD image can also be obtained from \eqref{eq:covIWne}. As
in the case of density contrast, the typical shape of hot spots in
the image corrupted by noise is the same as the main peak of the
true image. Further, the effect of measurement noise is reduced by
a factor of $\sqrt{n}$ by using $n$ plane waves. In particular,
the SNR of the TD image is given by
\begin{equation}
\mathrm{SNR} = \frac{\delta^d \sqrt{\mu_0
\omega}}{\sqrt{c_\alpha^3}}
\big(\frac{\pi}{\K_\alpha}\big)^{\frac{d-2}{2}}
\frac{\K_S}{\K_\alpha} \frac{4\sqrt{2n}}{\signoise}
\sqrt{J_{\alpha,\alpha}(\bz_a,\bz_a)}.
\end{equation}
\section{Statistical stability with medium noise}
\label{sec:ssmedium}

In the previous section, we demonstrated that the proposed imaging
functional using multi-directional plane waves is statistically
stable with respect to uncorrelated measurement noises. Now we
investigate the case of medium noise, where the constitutional
parameters of the elastic medium fluctuate around a constant
background.

\subsection{Medium noise model}

For simplicity, we consider a medium that fluctuates in the
density parameter only. That is,
\begin{equation}
\rho(\bx) = \rho_0 [1+\bmu(\bx)], \label{eq:rhonoi}
\end{equation}
where $\rho_0$ is the constant background and $\rho_0\bmu(\bx)$ is
the random fluctuation in the density. Note that $\bmu$ is real
valued.

Throughout this section, we will call the homogeneous medium with
parameters $(\lambda_0,\mu_0,\rho_0)$ the reference medium. The
background medium refers to the one without inclusion but with
density fluctuation. Consequently, the background Neumann problem
of elastic waves is no longer \eqref{Background-Sol}. Indeed, that
equation corresponds to the reference medium and its solution will
be denoted by $\bU^{(0)}$. The new background solution is
\begin{equation}\label{eq:rbgsolu}
\left\{
\begin{array}{ll}
(\OL_{\lambda_0,\mu_0}+\rho_0\omega^2[1+\bmu])\bU =0, & \text{on
}\Omega,
\\\nm
\dis\frac{\partial\bU}{\partial\nu}=\bg & \text{on
}\partial\Omega,
\end{array}
\right.
\end{equation}
Similarly, the Neumann function associated to the problem in the
reference medium will be denoted by $\NN^{\omega,(0)}$. We denote
by $\NN^\omega$ the Neumann function associated to the background
medium, that is,
\begin{equation}\label{eq:rNeumEq}
\left\{
\begin{array}{ll}
(\OL_{\lambda_0,\mu_0}+\rho_0\omega^2[1+\bmu(\bx)])\NN^\omega
(\bx,\by) = - \delta_\by(\bx)\ID, &  \bx\in\Omega,\quad \bx\neq
\by,
\\\nm
\dis\frac{\partial\NN^\omega }{\partial\nu}(\bx,\by)=0 &
\bx\in\partial\Omega.
\end{array}
\right.
\end{equation}

We assume that $\bmu$ has small amplitude so that the Born
approximation is valid. In particular, we have
\begin{equation}
\NN^\omega(\bx,\by) \simeq \NN^{\omega,(0)}(\bx,\by) + \rho_0
\omega^2 \int_\Omega
\NN^{\omega,(0)}(\bx,\bz)\bmu(\bz)\NN^{\omega,(0)}(\bz,\by) d\by.
\label{eq:NBa}
\end{equation}
As a consequence, we also have that $\bU \simeq \bU^{(0)} -
\bU^{(1)}$ where
\begin{equation}
\bU^{(1)}(\bx) = - \rho_0 \omega^2\int_\Omega
\NN^{\omega,(0)}(\bx, \bz) \bmu(\bz) \bU^{(0)}(\bz) d\bz.
\label{eq:UBa}
\end{equation}
Let $\sigmu$ denotes the typical size of $\bmu$, the remainders in
the above approximations are of order $o(\sigmu)$.

\subsection{Statistics of the speckle field in the case of a density contrast
only} \label{sec:mednoise}

We assume that the inclusion has density contrast only. The
backpropagation step constructs $\bw$ as follows:
\begin{equation}
\bw(\bx) = \int_{\partial \Omega}
\bGam_0(\bx,\bz)\overline{(\frac{1}{2}I -
\Kcal^{\omega,(0)}_\Omega)[\bU^{(0)}-\bu_{\rm{meas}}]}(\bz)
d\sigma(\bz),\quad  \bx \in \Omega. \label{eq:wredef}
\end{equation}
We emphasize that the backpropagation step uses the reference
fundamental solutions, and the differential measurement is with
respect to the reference solution. These are necessary steps
because of the fluctuation in the background medium or
equivalently, because of the fact that the background solution is
unknown.

Writing the difference between $\bU^{(0)}$ and $\bu_{\rm{meas}}$
as the sum of $\bU^{(0)} - \bU$ and $\bU - \bu_{\rm{meas}}$. These
two differences are estimated by $\bU^{(1)}$ in \eqref{eq:UBa} and
by \eqref{Asymptotic-Exp}, respectively. Using Lemma
\ref{N-K-Gamma}, we find that
\begin{equation}\label{eq:wmed}
\begin{aligned}
\bw(\bx) = &\ - \rho_0\omega^2 \int_{\partial \Omega} \bGam_0 (\bx - \bz)\int_{\Omega} \overline{\bGam_0}(\bz-\by)\overline{\bU^{(0)}}(\by) \bmu(\by) d\by d\sigma(\bz)\\
\ & - C\delta^d \int_{\partial \Omega}
\bGam_0(\bx-\bz)\overline{\bGam_0}(\bz-\bz_a)
\overline{\bU^{(0)}}(\bz_a) d\sigma(\bz) + O(\sigmu \delta^d) +
o(\sigmu), \quad \bx \in \Omega,
\end{aligned}
\end{equation}
where $C = \omega^2(\rho_0-\rho_1)|B|$. The second term is the
leading contribution of $\bU-\bu_{\rm{meas}}$ given by
approximating the unknown Neumann function and the background
solution by those associated to the reference medium. The leading
error in this approximation is of order $O(\sigmu \delta^d)$ and
can be written explicitly as
\begin{equation*}
\begin{aligned}
C\rho_0 \omega^2 \delta^d &\int_{\partial \Omega} \bGam_0(\bx,\bz)  \int_{\Omega} \overline{\bGam_0}(\bz,\by) \overline{\NN^{\omega,(0)}}(\by,\bz_a) \overline{\bU^{(0)}}(\bz_a) \bmu(\by) d\by d\sigma(\bz)\\
& - C\rho_0 \omega^2  \delta^d  \int_{\partial \Omega}
\bGam_0(\bx,\bz) \overline{\bGam_0}(\bz,\bz_a) \int_{\Omega}
\overline{\NN^{\omega,(0)}}(\bz_a,\by) \overline{\bU^{(0)}}(\by)
\bmu(\by) d\by d\sigma(\bz),
\end{aligned}
\end{equation*}
and is neglected in the sequel.

For the Helmholtz decomposition $\bw^\alpha, \alpha \in \{P,S\}$,
the first fundamental solution $\bGam_0(\bx-\bz)$ in the
expression \eqref{eq:wmed} should be changed to
$\bGam_{0,\alpha}(\bx-\bz)$. We observe that the second term in
\eqref{eq:wmed} is exactly \eqref{39}. Therefore, we call this
term $\bw_{\rm{true}}$ and refer to the other term in the
expression as $\bw_{\rm{noise}}$. Using the Helmholtz-Kirchhoff
identity, we obtain
\begin{equation}
\label{eq:bwnoise} \bw^\alpha_{\rm{noise}}(\bx) \simeq -
\frac{\rho_0 \omega}{c_\alpha} \int_{\Omega} \bmu(\by) \Im
m\{\bGam_{0,\alpha}(\bx-\by)\} \overline{\bU^{(0)}}(\by) d\by,
\quad \bx \in \Omega.
\end{equation}


We have decomposed the backpropagation $\bw^\alpha$ into the
``true'' $\bw^\alpha_{\rm{true}}$ which behaves like in reference
medium and the error part $\bw^\alpha_{\rm{noise}}$. In the TD
imaging functional using multiple plane waves with
equi-distributed directions, the contribution of
$\bw^\alpha_{\rm{true}}$ is exactly as the one analyzed in
Proposition \ref{prop-WTD-caseI}. The contribution of
$\bw^\alpha_{\rm{noise}}$ is a speckle field.

The covariance function of this speckle field, or equivalently
that of the TD image corrupted by noise, is
\begin{equation*}
\mathrm{Cov}
(\IWF[\{\bU_j^\alpha\}](\bz),\IWF[\{\bU_j^\alpha\}](\bz')) =
C_\alpha^2 \frac{1}{n^2} \sum_{j,l=1}^n \EE [\Re
e\{\bU^{(0),\alpha}_j(\bz) \cdot \bw^\alpha_{j,\rm{noise}}(\bz) \}
\Re e\{\bU^{(0),\alpha}_l (\bz)\cdot
\bw^\alpha_{l,\rm{noise}}(\bz) \}],
\end{equation*}
for $\bz, \bz' \in \Omega$, where $C_\alpha$ is defined to be
$c_\alpha \omega^2|B'|(\rho'_1/\rho_0-1)$. Here $\bU^{(0),\alpha}$
are the reference incoming plane waves (\ref{plane-waves}).

Using the expression \eqref{eq:bwnoise}, we have
\begin{equation*}
\begin{aligned}
\frac{1}{n} \sum_{j=1}^n \bU^\alpha_j(\bz) \cdot
\bw^\alpha_{j,\rm{noise}}(\bz) &= - b_\alpha \frac{1}{n}
\sum_{j=1}^n\int_\Omega \bmu(\by)
\big[\bU^{(0),\alpha}_j(\bz)\otimes
\overline{\bU^{(0),\alpha}_j}(\by)\big] : \Im m\{\bGam_{0,\alpha}(\bz-\by)\} d\by\\
&= - b_\alpha \int_\Omega \bmu(\by) \frac{1}{n} \sum_{j=1}^n
e^{i\K_\alpha(\bz-\by)\cdot \be_{\theta_j}}  \be_{\theta_j}^\alpha
\otimes \be_{\theta_j}^\alpha : \Im m\{\bGam_{0,\alpha}(\bz-\by)\}
d\by.
\end{aligned}
\end{equation*}
where $b_\alpha = (\rho_0 \omega)/c_\alpha$. Finally, using
\eqref{approx-e-times-e} and \eqref{approx-eP-times-eP} for
$\alpha = P$ and $S$ respectively, we obtain that
\begin{equation}
\frac{1}{n} \sum_{j=1}^n \bU^{(0),\alpha}_j(\bz) \cdot
\bw^\alpha_{j,\rm{noise}}(\bz) = b'_\alpha \int_\Omega \bmu(\by)
\lvert \Im m\{\bGam_{0,\alpha}(\bz-\by)\} \rvert^2 d\by.
\label{eq:bUtimesbw}
\end{equation}
Here $b'_\alpha = 4b_\alpha\mu_0(\frac{\pi}{\K_\alpha})^{d-2}
(\frac{\K_S}{\K_\alpha})^2$. Note that the sum above is a real
quantity.

The covariance function of the TD image simplifies to
\begin{equation}
C_\alpha^2 {b'_\alpha}^2 \int_\Omega \int_\Omega C_\bmu(\by,\by')
\lvert \Im m\{\bGam_{0,\alpha}(\bz-\by)\}\rvert^2 \lvert \Im
m\{\bGam_{0,\alpha}(\bz'-\by')\}\rvert^2 d\by d\by',
\label{eq:covIFmed}
\end{equation}
where $C_\bmu(\by,\by') = \EE[\bmu(\by) \bmu(\by')]$ is the
two-point correlation function of the fluctuations in the density
parameter.

\begin{rem}\label{rem:corrlen}
The expression in \eqref{eq:bUtimesbw} shows that the speckle
field in the image is essentially the medium noise smoothed by an
integral kernel of the form $\lvert\Im m
\bGam_{0,\alpha}\rvert^2$. Similarly, \eqref{eq:covIFmed} shows
that the correlation structure of the speckle field is essentially
that of the medium noise smoothed by the same kernel. Because the
typical width of this kernel is about half the wavelength, the
correlation length of the speckle field is roughly the maximum
between the correlation length of medium noise and the wavelength.
\end{rem}


\subsection{Statistics of the speckle field in the case of an elasticity contrast}

The case of elasticity contrast can be considered similarly. The
covariance function of the TD image is
\begin{equation*}
c_\alpha^2 \frac{1}{n^2} \sum_{j,l=1}^n \EE [\Re
e\{\nabla\bU^{(0),\alpha}_j(\bz) : \MM' \nabla
\bw^\alpha_{j,\rm{noise}}(\bz)\} \Re e\{\nabla
\bU^{(0),\alpha}_l(\bz') : \MM' \nabla
\bw^\alpha_{l,\rm{noise}}(\bz') \}].
\end{equation*}
Using the expression of $\bw^\alpha_{\rm{noise}}$, we have
\begin{equation*}
\begin{aligned}
\frac{1}{n} \sum_{j=1}^n \nabla\bU^\alpha_j(\bz) : \MM' \nabla
\bw^\alpha_{j,\rm{noise}}(\bz) = -
 b_\alpha & \int_\Omega \bmu(\by) \frac{1}{n} \sum_{j=1}^n i\K_\alpha e^{i\K_\alpha(\bz-\by)\cdot \be_{\theta_j}} \\
&\be_{\theta_j}\otimes \be_{\theta_j}^\alpha \otimes
\be_{\theta_j}^\alpha : \big[\MM' \Im m\{\nabla_\bz
\bGam_{0,\alpha}(\bz-\by)\} \big] d\by.
\end{aligned}
\end{equation*}
From \eqref{approx-e-times-e} and \eqref{approx-eP-times-eP}, we
see that
\begin{equation}
\frac{1}{n} \sum_{j=1}^n i\K_\alpha e^{i\K_\alpha \bx \cdot
\be_{\theta_j}} \be_{\theta_j}\otimes \be_{\theta_j}^\alpha
\otimes \be_{\theta_j}^\alpha = - 4\mu_0
\big(\frac{\pi}{\K_\alpha}\big)^{d-2} (\frac{\K_S}{\K_\alpha})^2
\Im m\{\nabla \bGam_{0,\alpha}(\bx)\}. \label{eq:sumeee}
\end{equation}
Using this formula, we get
\begin{equation}
\frac{1}{n} \sum_{j=1}^n \nabla\bU^\alpha_j(\bz) : \MM' \nabla
\bw^\alpha_{j,\rm{noise}}(\bz) = b_\alpha' \int_\Omega \bmu(\by)
Q^2_\alpha[\MM'] (\bz-\by) d\by, \label{eq:sumubwE}
\end{equation}

where $Q^2_\alpha[\MM'](\bx)$ is a non-negative function defined
as
\begin{equation}
\begin{aligned}
Q^2_P[\MM](\bx) &= \Im m\{\nabla \bGam_{0,P}(\bx) \} :
[\MM \Im m\{\nabla \bGam_{0,P} (\bx)\}]\\
&= a\lvert \Im m \{\nabla \bGam_{0,P}(\bx)\} \rvert^2 + b\lvert
\Im m\{\nabla \cdot \bGam_{0,P}(\bx)\}\rvert^2. \label{eq:Qdef}
\end{aligned}
\end{equation}
The last equality follows from the expression \eqref{M-disk2} of
$\MM$ and the fact that $\partial_i (\bGam_{0,P})_{jk} =
\partial_j (\bGam_{0,P})_{ik}$. This symmetry is not satisfied for $\bGam_{0,P}$ for which we have
\begin{equation}
\begin{aligned}
& Q^2_S[\MM](\bx) = \Im m\{\nabla \bGam_{0,S}(\bx) \} :
[\MM \Im m\{\nabla \bGam_{0,S} (\bx)\}]\\
= \ &\frac{a}{2} \lvert \Im m\{\nabla \bGam_{0,S}(\bx)\} \rvert^2
+ \frac{a}{2} \Im m\{\nabla \bGam_{0,S}(\bx)\}: \Im
m\{\widetilde{\nabla} \bGam_{0,S}(\bx)\}+ b\lvert \Im m\{\nabla
\cdot \bGam_{0,S}(\bx)\} \rvert^2. \label{eq:QdefS}
\end{aligned}
\end{equation}
Here $(\nabla \cdot \bGam_{0,S}(\bx))_{jkl} = \partial_k
(\bGam_{0,S}(\bx))_{jl}$. Note that $Q^2_\alpha$ is non-negative
and \eqref{eq:sumubwE} is real valued.

The covariance function of the TD image  simplifies to
\begin{equation}
c_\alpha^2 {b'_\alpha}^2 \int_\Omega \int_\Omega C_\bmu(\by,\by')
Q^2_\alpha[\MM'](\bz-\by) Q^2_\alpha[\MM] (\bz'-\by') d\by d\by',
\quad \bz, \bz'\in \Omega . \label{eq:covIFmedE}
\end{equation}

\begin{rem} If we compare \eqref{eq:sumubwE} with \eqref{eq:bUtimesbw}, then one can see that they are of the same form except that the integral
kernel is now $Q^2_\alpha[\MM']$. Therefore, Remark
\ref{rem:corrlen} applies here as well. We remark also that the
further reduction of the effect of measurement noise with rate
$1/\sqrt{2n}$ does not appear in the medium noise case. In this
sense, TD imaging is less stable with respect to medium noise.
\end{rem}

\subsection{Random elastic medium}

In this section we consider the case when the random fluctuation
occurs in the elastic coefficients. This is a more delicate case
because it is well known that inhomogeneity in the Lam\'e
coefficients cause mode conversion. Nevertheless, we demonstrate
below that as long as the random fluctuation is weak so that the
Born approximation is valid, the imaging functional we proposed
remains stable.

To simplify the presentation, we assume that random fluctuation
occurs only in the shear modulus $\mu$ while the density $\rho_0$
and the first Lam\'e coefficient $\lambda_0$ are homogeneous. The
equation for time-harmonic elastic wave is then
\begin{equation}
\rho_0 \omega^2 \Bu + \lambda_0 \nabla (\nabla \cdot \Bu) + \nabla
\cdot [\mu(\bx)(\nabla \Bu + (\nabla \Bu)^T)] = 0,
\label{eq:rmuelastic}
\end{equation}
with the same boundary condition as before. The inhomogeneous
shear modulus is given by
\begin{equation}
\mu(\bx) = \mu_0 + \gamma(\bx,\omega), \label{eq:randmu}
\end{equation}
where $\gamma(\bx)$ is a random process modeling the fluctuation.

{\bfseries Born approximation.} The equation for elastic wave
above can be written as
\begin{equation*}
\OL_{\lambda_0,\mu_0} \Bu + \rho_0 \omega^2 \Bu = - \nabla \cdot
[\gamma(\bx)(\nabla \Bu + (\nabla \Bu )^T)].
\end{equation*}
Assume that the random fluctuation $\gamma$ is small enough so
that the Born approximation is valid. We then have $\Bu \simeq
\Bu_0 - \Bu_1$ where $\Bu_0$ solves the equation in the background
medium and $\Bu_1$, the first scattering, solves the above
equation with $\Bu$ on the right hand side replaced by $\Bu_0$.
More precisely, we have
\begin{equation}
\Bu_1(\bx) = \int_\Omega \BN^\omega(\bx,\by){\nabla \cdot
[\gamma(\by)(\nabla \Bu + (\nabla \Bu )^T)]} d\by.
\label{eq:muBorn}
\end{equation}
Here, $\BN^\omega$ is the Neumann function in the background
medium without random fluctuation.

{\bfseries Post-processing step.} As seen before, the
post-processing \eqref{W-Def} is a critical step in our method. As
discussed in section \ref{sec:mednoise}, even when the medium is
random we have to use the reference Green's function and the
reference solution associated to the homogeneous medium in this
post-processing step. Following the analysis in section
\ref{sec:mednoise}, we see that as in \eqref{eq:wmed} the function
$\Bw$ contains two main contributions: Firstly, back-propagating
the difference between the measurement and the background solution
in the random medium but without inclusion contributes to the
detection of inclusion. Secondly, back-propagating the difference
between the background solution and the reference solution in the
homogeneous medium amounts to a speckle pattern in the image.

The first contribution corresponds to the case with exact data and
is discussed in section 4. We focus on the second contribution
which accounts for the statistical stability. This part of the
post-processed function $\Bw$ has the expression
\begin{equation*}
\begin{aligned}
\Bw^\alpha_{\mathrm{noise}}(\bz) = &\OH^\alpha [\S^\omega_\Omega
\overline{(\frac{1}{2}I - \Kcal^\omega_\Omega)\Bu_1}]
= \OH^\alpha [\int_{\partial \Omega} \bGam_0(\bz,\by) \overline{[(\frac{1}{2}I -
\Kcal^\omega_\Omega)\int_\Omega \bN^\omega(\cdot,\bx) \Bv(\bx)d\bx]}(\by) d\sigma(\by)]\\
= &\int_{\partial \Omega} \bGam_{0,\alpha}(\bz,\by) \int_\Omega
\overline{\bGam_0}(\by,\bx) \nabla \cdot[\gamma(\bx)
\overline{(\nabla \Bu + (\nabla \Bu )^T)}(\bx)]d\bx d\sigma(\by).
\end{aligned}
\end{equation*}
In the second equality above, $\Bv(\bx)$ is a short-hand notation
for the divergence term in the line below. We refer to this term
as the first scattering source. Using the Helmholtz-Kirchhoff
identity again, we obtain that
\begin{equation}
\Bw_{\alpha,\mathrm{noise}}(\bz) = \frac{1}{c_\alpha \omega}
\int_\Omega \Im m\{\bGam_{0,\alpha}(\bz,\bx)\} \nabla
\cdot[\gamma(\bx) \overline{(\nabla \Bu + (\nabla \Bu
)^T)}(\bx)]d\bx \label{eq:delmc}
\end{equation}

\begin{rem}\label{rem:hdec} Compare the above expression with that in \eqref{eq:bwnoise}. The first scattering source in \eqref{eq:bwnoise} is exactly the incident wave in the case with density fluctuation but is more complicated in the case with elastic fluctuation, see \eqref{eq:fsmuP} below. This shows that the Born approximation in an inhomogeneous medium indeed captures weak mode conversion. Nevertheless, \eqref{eq:delmc} shows that our method, due to the Helmholtz-Kirchhoff identity and our proposal of using Helmholtz decomposition, extracts only the modes that are desired by the imaging functional. As we will see, this is crucial to the statistical stability of the imaging functional.
\end{rem}

{\bfseries The speckle field.} The specific expression of the imaging function depends on the type of the inclusion and the type of probing planewaves. We first consider the case of a density inclusion. For a pressure wave $\bU^P =
e^{i\K_P \bx \cdot \be_{\theta}}$, the first scattering source is
\begin{equation}
\Bv(\bx) = 2i\K_P\nabla \cdot(\gamma(\bx)e^{i\K_P \bx \cdot
\be_{\theta}} \be_{\theta} \otimes \be_{\theta}) = 2i\K_P (\nabla
\gamma \cdot \be_{\theta}) \bU^P(\bx) - 2\K_P^2 \gamma(\bx)
\bU^P(\bx). \label{eq:fsmuP}
\end{equation}
The speckle field in the imaging functional with a set of pressure
waves $\{\bU^P_j\}$ is
\begin{equation*}
\begin{aligned}
&\I_{\mathrm{WF,noise}}[\{\bU^P_j\}](\bz) =  c_P\omega^2(\frac{\rho'_1}{\rho_0}-1)|B'| \frac{1}{n} \Re e \sum_{j=1}^n \bU_j^P(\bz) \cdot \Bw^P_{\mathrm{noise}}(\bz)\\
=\ &\omega(\frac{\rho'_1}{\rho_0}-1)|B'| \Re e \int_\Omega -2\K_P^2 \gamma(\bx) \frac{1}{n} \sum_{j=1}^n e^{i\K_P (\bz - \bx)\cdot \be_{\theta_j}} \be_{\theta_j} \otimes \be_{\theta_j} : \Im m \{\bGam_{0,P}(\bz,\bx)\}\\
& \quad - 2\frac{1}{n} \sum_{j=1}^n i\K_P e^{i\K_P(\bz-\bx)\cdot
\be_{\theta_j}} \be_{\theta_j} \otimes \be_{\theta_j} \otimes
\be_{\theta_j} : [\nabla \gamma(\bx) \otimes \Im m
\{\bGam_{0,P}(\bz,\bx)\}] d\bx.
\end{aligned}
\end{equation*}
Using the summation formulas \eqref{approx-e-times-e} and
\eqref{eq:sumeee}, we can rewrite the above quantity as
\begin{equation*}
C^P_1 \int_\Omega 2\K_P^2 \gamma(\bx) \lvert \Im
m\{\bGam_{0,P}(\bz,\bx)\} \rvert^2 + 2\Im m\{\nabla_\bz
\bGam_{0,P}(\bz,\bx)\} : [\nabla \gamma(\bx) \otimes\Im
m\{\bGam_{0,P}(\bz,\bx)\}] d\bx.
\end{equation*}
Here, the constant is
\begin{equation*}
C^P_1 = 4\mu_0 (\frac{\pi}{\K_P})^{d-2} (\frac{\K_S}{\K_P})^2
\omega(\frac{\rho'_1}{\rho_0} - 1)|B'| = 4\pi^{d-2}
\frac{\omega^3}{\K_P^d}(\rho'_1-\rho_0)|B'|.
\end{equation*}
Assuming that $\gamma=0$ near the boundary and using the
divergence theorem, we can further simplify the expression of the
speckle field to
\begin{equation*}
C_1^P \int_\Omega \gamma(\bx)[(2\K_P^2 I + \Delta_\bx)\lvert \Im
m\{\bGam_{0,P}(\bz,\bx)\}\rvert^2] d\bx = C_1^P \int_\Omega
[(2\K_P^2 I + \Delta)\gamma(\bx)] \lvert \Im
m\{\bGam_{0,P}(\bz,\bx)\}\rvert^2 d\bx.
\end{equation*}
This expression is again of the form of \eqref{eq:bUtimesbw} and
\eqref{eq:sumubwE} except that the integral kernel is more
complicated. Its correlation can be similarly calculated.
Furthermore, Remarks \ref{rem:corrlen} and \ref{rem:hdec} apply
here. The salient feature of
the speckle field does not change. It is essentially the medium
noise (or the derivative of the medium noise) smoothed by an
integral kernel whose width is of the order of the wavelength. The
correlation length of the speckle field is of the order of the
maximum between that of the medium noise and the wavelength.

The case when shear waves are used in \eqref{eq:IWFdef} can be similarly considered. For a shear wave  $\bU^S= e^{i\K_S \bx\cdot \be_{\theta}} \be_{\theta}^\perp$, the first scattering source becomes
\begin{equation}
\begin{aligned}
\Bv(\bx) &= i\K_S\nabla \cdot(\gamma(\bx)e^{i\K_S \bx \cdot \be_{\theta}} [\be_{\theta} \otimes \be_{\theta}^\perp + \be_{\theta}^\perp \otimes \be_{\theta}])\\
&= - \K_S^2 \gamma(\bx) \bU^S(\bx) + i\K_S (\nabla \gamma \cdot \be_{\theta}) \bU^S(\bx) + i\K_S (\nabla \gamma \cdot \be_{\theta}^\perp) e^{i\K_S \bx \cdot \be_{\theta}} \be_{\theta}.
\end{aligned}
\label{eq:fsmuS}
\end{equation}
The speckle field in the imaging functional with a set of shear
waves $\{\bU^S_j\}$ is
\begin{equation*}
\begin{aligned}
&\I_{\mathrm{WF,noise}}[\{\bU^S_j\}](\bz) =  c_S\omega^2(\frac{\rho'_1}{\rho_0}-1)|B'| \frac{1}{n} \Re e \sum_{j=1}^n \bU_j^S(\bz) \cdot \Bw^S_{\mathrm{noise}}(\bz)\\
=\ &\omega(\frac{\rho'_1}{\rho_0}-1)|B'| \Re e \int_\Omega -\K_S^2 \gamma(\bx) \frac{1}{n} \sum_{j=1}^n e^{i\K_S (\bz - \bx)\cdot \be_{\theta_j}} \be_{\theta_j}^\perp \otimes \be_{\theta_j}^\perp : \Im m \{\bGam_{0,S}(\bz,\bx)\}\\
& \quad - \frac{1}{n} \sum_{j=1}^n i\K_S e^{i\K_S(\bz-\bx)\cdot \be_{\theta_j}} \be_{\theta_j} \otimes \be_{\theta_j}^\perp \otimes \be_{\theta_j}^\perp : [\nabla \gamma(\bx) \otimes\Im m \{\bGam_{0,S}(\bz,\bx)\}] \\
& \quad - \frac{1}{n} \sum_{j=1}^n i\K_S e^{i\K_S(\bz-\bx)\cdot \be_{\theta_j}} \be_{\theta_j}^\perp \otimes \be_{\theta_j}^\perp \otimes \be_{\theta_j} : [\nabla \gamma(\bx) \otimes \Im m \{\bGam_{0,S}(\bz,\bx)\}] d\bx\\
\end{aligned}
\end{equation*}
Using the summation formulas \eqref{approx-eP-times-eP} and \eqref{eq:sumeee}, we can rewrite the above quantity as 
\begin{equation*}
\begin{aligned}
C^S_1 \int_\Omega \K_S^2 &\gamma(\bx) \lvert \Im m\{\bGam_{0,S}(\bz,\bx)\} \rvert^2 + \Im m\{\nabla_{\bz} \bGam_{0,S}(\bz,\bx)\} : [\nabla \gamma(\bx) \otimes \Im m\{\bGam_{0,S}(\bz,\bx)\}]\\
& + \Im m\{(\nabla_{\bz} \bGam_{0,S})^T (\bz,\bx)\} : [\nabla \gamma(\bx) \otimes \Im m\{\bGam_{0,S}(\bz,\bx)\}]d\bx.
\end{aligned}
\end{equation*}
Here, the constant is
\begin{equation*}
C^S_1 = 4\mu_0 (\frac{\pi}{\K_S})^{d-2} \omega(\frac{\rho'_1}{\rho_0} - 1)|B'| = 4\pi^{d-2} \frac{\omega^3}{\K_S^d}(\rho'_1-\rho_0)|B'|.
\end{equation*} 
Moreover, the notation $(\nabla\bGam_{0,S})^T$ means the following:
\begin{equation*}
(\nabla\bGam_{0,S}(\bx))^T_{jkl} = \partial_l (\bGam_{0,S}(\bx))_{jk}.
\end{equation*}
Again, the speckle field is a smoothed version of the medium noise and its derivatives. The smoothing kernel can be read from the expression above.

\bigskip
{\bfseries Elastic inclusion with shear waves.}  Now we consider the case of an elastic inclusion. The imaging function takes the form of \eqref{eq:IWF_e}. When a set of pressure waves $\{\bU^P_j\}$ are used, the speckle field in the imaging function can be calculated as follows.
\begin{equation*}
\begin{aligned}
&\I_{\mathrm{WF,noise}}[\{\bU^P_j\}](\bz) =  c_P\frac{1}{n} \Re e \sum_{j=1}^n \nabla \bU_j^P(\bz) : \MM'(B') \nabla \Bw^P_{\mathrm{noise}}(\bz)\\
=\ &\frac{1}{\omega}\Re e \int_\Omega -\gamma(\bx) \frac{2\K_P^2 }{n} \sum_{j=1}^n i\K_P e^{i\K_P (\bz - \bx)\cdot \be_{\theta_j}} \be_{\theta_j} \otimes \be_{\theta_j} \otimes \be_{\theta_j}: \MM'(B')\Im m \{\nabla_\bz \bGam_{0,P}(\bz,\bx)\}\\
&-\frac{2}{n} \sum_{j=1}^n (i\K_P)^2 e^{i\K_P(\bz-\bx)\cdot \be_{\theta_j}} \be_{\theta_j} \otimes \be_{\theta_j} \otimes \be_{\theta_j} \otimes \be_{\theta_j} : [\MM'(B')\Im m \{\nabla_\bz \bGam_{0,P}(\bz,\bx)\} \otimes \nabla \gamma(\bx)] d\bx.
\end{aligned}
\end{equation*}
Using the summation formulas \eqref{eq:sumubwE} and \eqref{approx-A-Hessian}, we can write the above quantity as
\begin{equation*}
\begin{aligned}
C^P_2 \int_\Omega &2\K_P^2 \gamma(\bx) \Im m\{\nabla \bGam_{0,P}(\bz,\bx)\}: [\MM'(B') \Im m\{\nabla_\bz \bGam_{0,P}(\bz,\bx)\}]\\
&\quad \quad \quad +2\Im m\{ \nabla\nabla \bGam_{0,P}(\bz,\bx)\} : [\MM'(B') \Im m\{\nabla_\bz\bGam_{0,P}(\bz,\bx) \otimes \nabla \gamma(\bx)\}] d\bx.
\end{aligned}
\end{equation*}
Here, the constant has the expression
\begin{equation*}
C^P_2 = \frac{1}{\omega} 4\mu_0 (\frac{\pi}{\K_P})^{d-2} (\frac{\K_S}{\K_P})^2 = 4\pi^{d-2} \frac{\rho\omega}{\K_P^d}.
\end{equation*}
The tensor products above are defined as
\begin{equation*}
\begin{aligned}
(\MM'(B') \Im m\{\nabla_\bz \bGam_{0,P}(\bz,\bx)\})_{jkl} &= m_{jkpq} \partial_p (\Im m\{\bGam_{0,P}(\bz,\bx)\})_{ql},\\
(\MM'(B') \Im m\{\nabla_\bz\bGam_{0,P}(\bz,\bx) \otimes \nabla \gamma(\bx))_{jkls} &= (\MM'(B') \Im m\{\nabla_\bz \bGam_{0,P}(\bz,\bx)\})_{jkl} \partial_s \gamma,\\
(\nabla\nabla \bGam_{0,P}(\bz,\bx))_{jkls} &= \partial_j \partial_k (\bGam_{0,P}(\bz,\bx))_{ls}.
\end{aligned}
\end{equation*}

Similarly, when a set of shear plane waves $\{\bU^S_j\}$ are used, the speckle field in the imaging function can be calculated as follows.
\begin{equation*}
\begin{aligned}
&\I_{\mathrm{WF,noise}}[\{\bU^S_j\}](\bz) =  c_S\frac{1}{n} \Re e \sum_{j=1}^n \nabla \bU_j^S(\bz) : \MM'(B') \nabla \Bw^S_{\mathrm{noise}}(\bz)\\
=\ &\frac{1}{\omega}\Re e \int_\Omega -\gamma(\bx) \frac{\K_S^2 }{n} \sum_{j=1}^n i\K_S e^{i\K_S (\bz - \bx)\cdot \be_{\theta_j}} \be_{\theta_j} \otimes \be_{\theta_j}^\perp \otimes \be_{\theta_j}^\perp: \MM'(B')\Im m \{\nabla_\bz \bGam_{0,S}(\bz,\bx)\}\\
&-\frac{1}{n} \sum_{j=1}^n (i\K_S)^2 e^{i\K_S(\bz-\bx)\cdot \be_{\theta_j}} \be_{\theta_j} \otimes \be_{\theta_j} \otimes \be_{\theta_j}^\perp \otimes \be_{\theta_j}^\perp : [\nabla \gamma(\bx)\otimes  \MM'(B')\Im m \{\nabla_\bz \bGam_{0,S}(\bz,\bx)\}]\\
&-\frac{1}{n} \sum_{j=1}^n (i\K_S)^2 e^{i\K_S(\bz-\bx)\cdot \be_{\theta_j}} \be_{\theta_j} \otimes \be_{\theta_j}^\perp \otimes \be_{\theta_j} \otimes \be_{\theta_j}^\perp : [\MM'(B')\Im m \{\nabla_\bz \bGam_{0,P}(\bz,\bx)\} \otimes \nabla \gamma(\bx)] d\bx.
\end{aligned}
\end{equation*}
Use formula \eqref{eq:sumubwE} for the first integral and the differentiation of \eqref{eq:sumubwE} for the second integral; use formula \eqref{approx-eP-times-eP-Hessian} for the third integral. We conclude that the speckle filed admits the following expression:
\begin{equation*}
\begin{aligned}
C^S_2 \int_\Omega &\K_S^2 \gamma(\bx) \Im m\{\nabla \bGam_{0,S}(\bz,\bx)\}: [\MM'(B') \Im m\{\nabla_\bz \bGam_{0,S}(\bz,\bx)\}]\\
&\quad \quad \quad +\Im m\{ \nabla\nabla \bGam_{0,S}(\bz,\bx)\} : [\nabla \gamma(\bx)  \otimes \MM'(B') \Im m\{\nabla_\bz\bGam_{0,P}(\bz,\bx)\}]\\
&\quad \quad \quad +\Im m\{ \nabla^2 \bGam_{0,S}(\bz,\bx)\} : [\nabla \gamma(\bx)  \otimes \MM'(B') \Im m\{\nabla_\bz\bGam_{0,P}(\bz,\bx)\}] d\bx.
\end{aligned}
\end{equation*}
Here, the constant has the expression
\begin{equation*}
C^S_2 = \frac{1}{\omega} 4\mu_0 (\frac{\pi}{\K_S})^{d-2} = 4\pi^{d-2} \frac{\rho\omega}{\K_S^d}.
\end{equation*}
Note also that the tensor $\nabla\nabla \bGam_{0,S}$ has coordinates $\{\partial_i \partial_j (\bGam_{0,S})_{kl}\}$ and it is different from $\nabla^2 \bGam_{0,S}$ which is defined first in \eqref{approx-eP-times-eP-Hessian}.

\begin{rem} To summarize, we derived expressions for the speckle field in the imaging functional when the elastic medium has random fluctuations. We showed that the speckle field is essentially the medium noise (and its derivatives) smoothed by some integral kernels that depend only on the fundamental solution in the homogeneous medium. As a consequence, the covariance function of the speckle filed can be expressed as that of the medium noise (and those of its derivatives) smoothed by similar integral kernels.
\end{rem}
\section{Conclusion}\label{sec:conclu}

In this paper, we performed an analysis of the topological
derivative (TD) based elastic inclusion detection algorithm. We
have seen that the standard TD based imaging functional may not
attain its maximum at the location of the inclusion. Moreover, we
have shown that its resolution does not reach the diffraction
limit and identified the responsible terms, that are associated to
the coupling of different wave-modes. In order to enhance
resolution to its optimum, we cancelled out these coupling terms
by means of a Helmholtz decomposition and thereby designing a
weighted imaging functional. We proved that the modified
functional behaves like the square of the imaginary part of a
pressure or a shear Green function, depending upon the choice of
the incident wave, and then attains its maximum at the true
location of the inclusion with a Rayleigh resolution limit, that
is, of the order of half a wavelength. Finally, we have shown that
the proposed imaging functionals are very stable with respect to
measurement noise and moderately stable with respect to medium
noise.

In a forthcoming work, we intend  to extend the results of the
paper to the localization of the small infinitesimal elastic
cracks and to the case of elastostatics. In this regard recent
contributions \cite{crack, AGJK-Top, AK-Pol} are expected to play
a key role.

\end{document}